\numberwithin{equation}{section}
\newtheorem{Theorem}{Theorem}[section]
\newtheorem{Lemma}[Theorem]{Lemma}
\newtheorem{Proposition}[Theorem]{Proposition}
\newtheorem{Corollary}[Theorem]{Corollary}
\newtheorem{Assumption}{H.\!\!}
\theoremstyle{definition}
\newtheorem{Definition}{Definition}[section]
\newtheorem{Example}{Example}[section]
\theoremstyle{remark}
\newtheorem{Remark}{Remark}[section]
\def\mapto{\rightarrow}
\def\eps{\varepsilon}
\def\ms{\medskip}
\def\cA{\mathcal{A}}
\def\cB{\mathcal{B}}
\def\cD{\mathcal{D}}
\def\cF{\mathcal{F}}
\def\cL{\mathcal{L}}
\def\cM{\mathcal{M}}
\def\cP{\mathcal{P}}
\def\cT{\mathcal{T}}
\def\cU{\mathcal{U}}
\def\cW{\mathcal{W}}
\def\cX{\mathcal{X}}
\def\cY{\mathcal{Y}}
\def\cZ{\mathcal{Z}}
\def\d{{\mathrm{d}}}
\def\sB{\mathbb{B}}
\def\sE{{\mathbb{E}}}
\def\sF{{\mathbb{F}}}
\def\sL{{\mathbb{L}}}
\def\sN{{\mathbb{N}}}
\def\sP{\mathbb{P}}
\def\sR{{\mathbb R}}
\def\sT{{\mathbb T}}
\newcommand{\tr}{\textnormal{tr}}
\DeclareMathOperator*{\argmax}{arg\,max}
\DeclareMathOperator*{\argmin}{arg\,min}
\newcommand{\lc}
{\mathrel{\raise2pt\hbox{${\mathop<\limits_{\raise1pt\hbox
{\mbox{$\sim$}}}}$}}}
\newcommand{\gc}
{\mathrel{\raise2pt\hbox{${\mathop>\limits_{\raise1pt\hbox{\mbox{$\sim$}}}}$}}}
\newcommand{\ec}
{\mathrel{\raise2pt\hbox{${\mathop=\limits_{\raise1pt\hbox{\mbox{$\sim$}}}}$}}}
\def\bb{\begin{equation}} \def\ee{\end{equation}}
\def\bbn{\begin{equation*}} \def\een{\end{equation*}}
\def\beqn{\begin{eqnarray}}  \def\eqn{\end{eqnarray}}
\def\beqnx{\begin{eqnarray*}} \def\eqnx{\end{eqnarray*}}
\def\bn{\begin{enumerate}} \def\en{\end{enumerate}}
\def\bd{\begin{description}} \def\ed{\end{description}}
\def\YZ#1{{\textcolor{blue}{Yufei: #1}}}
\begin{document}

\title{Continuous-time mean field games: a primal-dual characterization}

\author{
Xin Guo \thanks{University of California, Berkeley, Department of IEOR, email: xinguo@berkeley.edu}
\and
Anran Hu \thanks{Columbia University, Department of IEOR, email: ah4277@columbia.edu}
\and
Jiacheng Zhang \thanks{The Chinese University of Hong Kong, Department of Statistics, email: jiachengzhang@cuhk.edu.hk}
\and
Yufei Zhang \thanks{Imperial College London, Department of Mathematics, email: yufei.zhang@imperial.ac.uk}
}

\date{}
\maketitle

\begin{abstract}
This paper establishes a primal-dual formulation for   continuous-time mean field games (MFGs) and provides a complete analytical characterization of the set of all Nash   equilibria (NEs).
We first show that for any given mean field flow, the representative player's control problem with {\it measurable coefficients} is equivalent to a linear program  over the space of occupation measures. We then establish the dual formulation of this linear program  as a maximization problem over smooth subsolutions of the associated Hamilton-Jacobi-Bellman (HJB) equation, which plays a fundamental role in characterizing  NEs of  MFGs.
Finally, a complete characterization of \emph{all NEs for MFGs} is  established by  the  strong duality between the linear program and its dual problem.  
This strong duality is obtained by studying the solvability of the dual problem, and in particular through analyzing the regularity of the associated HJB equation.

Compared with existing approaches for MFGs, the primal-dual formulation and its  NE characterization require neither the   convexity of   the associated Hamiltonian nor the  uniqueness of its optimizer, and 
remain applicable 
when the HJB equation lacks classical or even continuous solutions.

\end{abstract}

%
%
\medskip
\noindent
\textbf{Key words.} 
Mean field game, 
Nash equilibrium,
primal-dual characterization,
occupation measure,
controlled martingale, 
superposition principle, 
strong duality,
Hamilton-Jacobi-Bellman equation,
Fokker–Planck equation

%

\ms
\noindent
\textbf{AMS subject classifications.} 
91A16, 90C46, 49L12



\medskip

 \section{Introduction}

\paragraph{Stochastic control and linear programming.}
 The connection between stochastic control problems and infinite-dimensional   linear programming    was initially envisioned anecdotally by E. Dynkin and later rigorously established in various forms of control problems, including  controlled martingale problems  \cite{stockbridge1990time,bhatt1996occupation,kurtz1998existence}, singular controls \cite{taksar1997infinite,kurtz2001stationary}, 
   control problems with constraints \cite{dufour2012existence,kurtz2017linear},
   and mean field controls \cite{djete2022extended}. 
 
 The primary purpose of the linear programming formulation for a control problem  is to establish the {\it existence} of an optimal control (see, e.g., \cite{kurtz1998existence}).
 Since proving existence generally requires identifying just one optimal control for the value function, the question of multiple optimal controls, particularly their characterization through the dual formulation of the linear program,  has remained largely unexplored in the stochastic control literature (See Remark \ref{rmk:dual_problem} for further discussion).

\paragraph{MFGs and NE.}
The theory of mean field games (MFGs) was pioneered in the seminal works of \cite{huang2006large} and \cite{lasry2007mean}. 
Its ingenious idea of assuming a population of homogeneous players with weak interactions has reduced the analysis of  nonzero-sum game  to finding the optimal strategy of a single representative player. Moreover,  the Nash equilibrium (NE)   of the MFG has been shown to serve as an $\epsilon$-NE for the corresponding finite-player game. As such, MFG theory provides an innovative framework for approximating NEs in finite-player dynamic games that would otherwise be intractable.

In MFGs, an NE is  defined by two key components as the result of the homogeneity assumption: the first is the optimality condition for the representative player, which ensures that the chosen strategy is optimal given the mean field distribution of the entire populationa; the second is the consistency condition, which requires that the state dynamics induced by the representative player's strategy match the distribution of the population state. 

To characterize NEs of MFGs, there are three main analytical approaches: the first is through the coupled  HJB-FP equations, consisting of 
a backward Hamilton-Jacobi-Bellman (HJB) equation that determines the optimal value function of the representative player  given the population distribution, and a forward Fokker-Planck (FP)  equation  that describes the evolution of the population distribution under the optimal feedback control derived from the HJB equation (see, e.g., \cite{huang2006large,lasry2007mean,
bayraktar2019rate}).
The second approach  replaces the  HJB equation with a backward stochastic differential equation (BSDE) \cite{buckdahn2009mean, carmona2021probabilistic, dianetti2021submodular}, offering a probabilistic approach  to characterize the optimal control of a representative player. The third approach is the  master equation approach, which involves deriving and analyzing  a partial differential equation (PDE) on the space of probability measures based on the coupled forward-backward system 
\cite{bensoussan2015master,cardaliaguet2019master,bayraktar2018analysis}.

In all these approaches, one typical assumption is   that the Hamiltonian associated with the control problem admits a unique optimizer. This guarantees that the representative player has a unique optimal control given the population distribution 
(see e.g., \cite{carmona2013probabilistic, 
huang2006large,
carmona2018probabilistic,   
bayraktar2019rate,
cardaliaguet2019master}). 
The only exception is 
\cite{knochenhauerlong}, which studies a continuous-time  MFG with two states and two actions and introduces a set-valued ordinary differential equation system to characterize the NEs.

\paragraph{MFGs and linear programming.}
 Linear programming formulation  was introduced in  mean field control and  stopping games in \cite{bouveret2020mean,dumitrescu2021control,dumitrescu2023linear} to characterize the optimality condition for NEs and to establish 
the existence of   NEs with mixed  strategies. 
Recently, a primal-dual formulation has been  developed in \cite{guo2024mf} for discrete-time MFGs  using the complementarity condition in linear program. By introducing dual variables, this approach successfully characterizes the  set of all NEs for {\it discrete-time} MFGs. As a consequence, \cite{guo2024mf} also proposes an efficient optimization algorithm for finding multiple NEs with performance guarantees.

These recent advancements in  linear programming formulations for MFGs, and in particular the primal-dual analysis highlight the potential of a new approach for analyzing MFGs. However, several challenges must be addressed before this optimization approach becomes a viable computational and analytical tool for  MFGs. 
First and foremost, a primal-dual formulation for general {\it continuous-time} MFGs must be established for the theoretical development of this optimization approach.
Additionally, while the linear programming formulation plays a well-recognized role in analyzing stochastic control problems, 
the significance and necessity of  its dual formulation   in  MFGs remain less well understood,   
beyond its demonstrated computational advantages in \cite{guo2024mf}.
Indeed, developing a primal-dual formulation for continuous-time MFGs would be critical for an analytical comparison between this optimization-based approach and existing methods.

 \paragraph{Our work.}
This paper focuses on establishing a primal-dual formulation for general continuous-time MFGs and providing a complete analytical characterization of the set of all NEs, {\it without} assuming the convexity of the Hamiltonian or the uniqueness of its optimizer.

 The  first  result is to show   that for any given mean field flow, the representative player's control problem with  {\it measurable coefficients} is equivalent to  a linear program over the space of occupation measures
 (Theorem \ref{thm:primal_diffusion}). 
In contrast to existing works that assume {\it continuous coefficients},  this generalization is crucial for characterizing NEs of an MFG, where the optimal policy of the representative player is only measurable in the state variable. 
This equivalence is established via  an equivalent auxiliary  controlled martingale problem. The main technical effort in the analysis is  to show that, using the so-called superposition principle (Proposition \ref{prop:measure_diffusion}),  any appropriate  admissible occupation measure for the  linear program induces a weak solution of the controlled state process.

The second  result is to establish the dual formulation of this linear program as a maximization problem over smooth subsolutions of the associated HJB equation
(Proposition \ref{prop:D_dual_equivalence}). The solution to the dual problem plays a fundamental role in 
characterizing   NEs of  MFGs, analogous to  the role of adjoint variables  in identifying all optimal controls in classical  control problems.
Specifically, for any given admissible occupation measure of the linear program, if there exists a solution to the dual problem   that achieves the same value, then the occupation measure corresponds to an NE of the MFG (Theorem \ref{thm:primal_dual_NE_diffusion_sufficient}). This is valid even when the associated HJB equation may not admit a continuous solution; see Example \ref{ex:comparison_with_HJB-FP}.

Finally, a complete characterization of \emph{all NEs for MFGs} is  established by  the  strong duality between the linear program and its dual problem
(Theorem \ref{thm:primal_dual_NE_diffusion_necessary}).  
This strong duality is obtained by studying the solvability of the dual problem, and in particular through analyzing the regularity of the associated HJB equation. In the case of the uncontrolled diffusion coefficient, the analysis  involves studying a   class of  semilinear PDEs  with general mean field  dependence, where the coefficients are H\"{o}lder continuous in the Wasserstein metric (See Remark \ref{rmk:pde_regularity}).

\paragraph{Primal-dual vs. existing approaches for MFGs.}
To compare the primal-dual formulation established in this paper with existing methods, recall that NEs in MFGs are determined by  both the {\it optimality} and the {\it consistency} conditions. In the HJB-FP approach, the consistency condition is captured by the FP equation,
which analyzes the consistency of the dynamics of the population distribution under the  optimal feedback control for the associated HJB equation.  
In the primal-dual approach,  the dual problem is introduced so that the consistency condition of the NE can be verified when the value of the primal solution equals to that of the dual formulation, with the existence of the optimal dual variable, i.e.,  the solvability of the dual problem. 
Notably, in an MFG  setting,  the linear programming formulation  for the optimal control problem {\it alone} is not sufficient to   characterize the  MFG, especially the set of NEs. 
Instead,  the combination of the linear program and its dual problem provides a complete characterization of the set of all NEs for the MFGs.

Furthermore,  it is evident from our analysis that in order
to find an NE, the constructed  dual variable only needs to   coincide with the solution of the associated HJB equation  on the support of the mean field flow; see Remark \ref{remark:distinction}.
This is  reminiscent of the maximum principle  in stochastic control problems, where it suffices to define the (decoupling fields  of) adjoint processes  along the optimal state trajectory.

{ 

Finally, compared with existing approaches for MFGs, the primal-dual formulation and its NE characterization require  neither  the  convexity of   the associated Hamiltonian nor the uniqueness of its optimizer, making the approach applicable even when traditional methods fail.
Indeed, in general MFGs, the Hamiltonian  may admit multiple optimizers, and the optimal feedback control may  exhibit jump discontinuities. 
 Moreover,  we show that
when the equilibrium    flow 
does not have full support, 
a solution to the dual problem  
  is not required to  satisfy   the HJB equation 
  for all states, hence  ensuring the applicability of the primal-dual approach in characterizing NEs even when the HJB equation does not admit a classical or even a continuous solution.  See Examples \ref{ex:multiple_minimizer} and \ref{ex:comparison_with_HJB-FP}.

\paragraph{Our techniques and most related works.}
The most related work is   \cite{guo2024mf}, which establishes a primal-dual formulation for discrete-time MFGs with finite states and actions, where the solvability of the dual problem follows directly from the Karush–Kuhn–Tucker (KKT) conditions.
In contrast, this paper develops a primal-dual formulation for general continuous-time MFGs, which is significantly more challenging and relies on entirely different techniques. For instance, 
 The equivalence between the linear programming formulation and control problem is established through an auxiliary controlled martingale problem by exploiting the superposition principle of diffusions,
which 
mimics a  narrowly continuous weak solution of an associated FP equation.
 Moreover,
the dual problem is formulated as a maximization problem over smooth subsolutions of the associated HJB equation, and the issue of its solvability is resolved by analyzing the regularity of the HJB solution.

Linear programming formulation for the representative player's control problem in MFGs has been explored in \cite{dumitrescu2021control, dumitrescu2023linear,cohen2024existence}.
However, as emphasized earlier, the linear program  {\it alone} is  insufficient to  characterize   MFGs,   in particular the complete set of all NEs. The strong duality established in this paper plays a crucial role in addressing this gap.

{The relationship between controlled diffusion problems with continuous coefficients and controlled martingale problems has been analyzed in various control and game setting:  in \cite{nicole1987compactification, haussmann1990existence} for the  existence of optimal controls,
 in  \cite{lacker2015mean}  for the existence of NEs, and more recently 
 in \cite{lacker2016general,djete2023large} for characterizing NEs  as limit  of $N$-player games. 
Our work extends    this connection   to MFGs with {\it measurable coefficients}, and further links the martingale problem framework   with LPs over occupation measures. This, along with the study of the dual problem, facilitates a comprehensive characterization of both the equilibrium flows and equilibrium policies in MFGs.
(see Theorems \ref{thm:primal_dual_NE_diffusion_sufficient} and \ref{thm:primal_dual_NE_diffusion_necessary}).

{Finally, interpretation of  the dual problem as the supremum over all smooth subsolutions of the associated HJB equation  is known in the classical control literature \cite{lions1983hamilton, fleming1989convex, buckdahn2011stochastic, serrano2015lp}, where the dual problem with {\it continuous coefficients} serves as an intermediate step in connecting the value function to the viscosity solutions of the HJB equation.  In our analysis
the dual problem 
(with   measurable coefficients) plays an essential role in characterizing  NEs of the MFG.}

}

\paragraph{Notation.}

  For any  topological space $X$, 
  we equip $X$ with the Borel $\sigma$-algebra $\cB(X)$.
  We denote by  $B_b(X)$   the   space  of   bounded   measurable functions on $X$,
  and  $C_b(X)$ the space of bounded continuous functions on $X$.
We denote by $\cM(X)$  the   space of finite signed measures on $X$,
   $\cM_+(X)$ the    positive cone 
  of finite (nonnegative) measures on   $X$,
    and   $\cP(X)$ 
  the space of probability measures on  $X$.
  We endow $\cP(X)$  with the topology of weak convergence.
  For any Polish spaces $X$ and $Y$, 
  we denote by $\cP(Y|X)$ the space of probability kernels 
  $\gamma: X\to \cP(Y)$.
We denote by 
$C^{1,2}_b([0,T]\times \sR^d)$
   the space of 
  continuous and bounded functions $
  u:[0,T]\times \sR^d\mapto \sR$  such that $\partial_t u$, 
  $\partial_x u$,
  and $\partial_{xx} u$ exist, and are continuous and bounded.
  We denote by $C^{2}_b( \sR^d)$ the space of time-independent functions in $C^{1,2}_b([0,T]\times \sR^d)$.

 Throughout the paper,  proofs of main results are deferred to Section \ref{sec:proofs}.

\section{Mathematical setup of MFGs}
\label{sec:mfg}
This section presents a mathematical formulation of a continuous-time  MFG. In this game,
 there are a continuum of homogeneous rational players.
 Each player controls a  continuous-time   diffusion process, 
where both the drift and diffusion coefficients are controlled and depend on a given mean field state distribution. The players  optimize a given criterion over a finite time horizon  $T<\infty$. 
   An NE in this game  is achieved when the distribution of the optimal controlled state process coincides with the given mean field distribution. 
   
   \paragraph{Representative player's problem.}

  Let 
  $\sT =[0,T]$ with $T>0$ be the time horizon,
    $A$ be a metric space representing 
the   player's   action space, 
  $b:\sT\times 
  \sR^d\times A\times \cP(\sR^d)\mapto \sR^d$ 
  and   $\sigma:\sT\times \sR^d\times A\times \cP(\sR^d)\mapto \sR^{d\times d}$     
  be   the state dynamics' coefficients,
  and 
     $f:\sT \times \sR^d\times A\times \cP(\sR^d)\mapto \sR$ and 
     $g:  \sR^d\times   \cP(\sR^d)\mapto \sR$
     be the cost functions,
     satisfying 
the following conditions.

\begin{Assumption}
\label{assm:diffusion_state}
  $A$ is a Polish space (i.e., complete separable metric space), 
  $b:\sT\times 
  \sR^d\times A\times \cP(\sR^d)\mapto \sR^d$, 
  $\sigma:\sT\times \sR^d\times A\times \cP(\sR^d)\mapto \sR^{d\times d}$,
   $f:\sT \times \sR^d\times A\times \cP(\sR^d)\mapto \sR$, and 
     $g:  \sR^d\times   \cP(\sR^d)\mapto \sR$
  are bounded and measurable.
\end{Assumption}

Given the   coefficients $b$ and $\sigma$,  
the state dynamics of the representative player follows a diffusion process with
a given initial distribution,
is 
controlled by   a relaxed policy, and 
evolves 
depending on the mean field distribution of the population's state.
More precisely,
let $\bm \mu =(\mu_t)_{t\in \sT } \in \cP(\sR^d|\sT )$
be a given mean field distribution of the population's state,
 $\rho\in \cP(\sR^d)$ be the distribution of the initial state,
and 
 $\cP(A| \sT\times \sR^d)$
be the collection of the representative player's policies.
Given 
 an 
$\sR^d$-valued   process 
$\bm X=(X_t)_{t\in \sT}$  on some   filtered probability space
$(\Omega, \cF, \sF=(\cF_t)_{t\in \sT}, \sP)$,
and a policy $\gamma \in \cP(A|\sT\times \sR^d) $, we call $(\bm X, \gamma) $
a  closed-loop  state-policy pair
if 
$\mathscr{L}^\sP(X_0)=\rho$
and 
$\bm X$
is a weak solution to 
the dynamics
\begin{equation}
\label{eq:state_diffusion}
\d X_t =b^{\bm \mu, \gamma} (t,X_t) \d t
+ \sigma^{\bm \mu, \gamma} (t,X_t) \d W_t,
\end{equation}
where $W$ is a $d$-dimensional $\sF$-Brownian motion on the probability space,
and 
$b^{\bm \mu, \gamma}:\sT\times \sR^d\mapto \sR^d $
and 
$\sigma^{\bm \mu, \gamma}:\sT\times \sR^d\mapto \sR^{d\times d}$
are measurable functions such that  
\begin{equation}\label{eq:def_bsigmamu}
b^{\bm \mu, \gamma}(t,x)
\coloneqq \int_A
b(t,x,a,\mu_t) \gamma(\d a|t,x),
\quad 
\sigma^{\bm \mu, \gamma}(t,x)
\coloneqq \sqrt{\int_A
(\sigma\sigma^\top) (t,x,a,\mu_t) \gamma(\d a|t,x)}
\end{equation}
with $\sqrt{M}$
being the principal square root of a positive  semidefinite matrix $M\in \mathbb{R}^{d\times d}$.

We denote by $\cA_{\rm cl}(\bm \mu)$ the set of all  closed-loop state-policy pairs $(\bm X, \gamma)$, with the given  mean field flow $\bm \mu$.
Observe that 
  the set 
$\cA_{\rm cl}(\bm \mu)$ may be 
 empty, 
as   (H.\ref{assm:diffusion_state}) only requires
the functions $b$ and $\sigma$
to be measurable in the state variable.
If the coefficients are more regular or 
the diffusion coefficient is  non-degenerate, 
then
$\cA_{\rm cl}(\bm \mu)$ is   non-empty;
 see (H.\ref{assum:duality}) in Section \ref{sec: strongdual}.

Given the set 
$\cA_{\rm cl}(\bm \mu) $
of closed-loop state-policy pairs,
the representative player considers the following minimization problem:
\begin{equation}\label{controlled_cl_diffusion}
\tag{CL}
\inf_{(\bm X,\gamma)\in \cA_{\rm cl}(\bm \mu) } J^{\bm \mu}_{\rm cl}(\bm X,\gamma),
\end{equation}
where $J^{\bm \mu}_{\rm cl}(\bm X,\gamma)$ is defined 
by
\begin{equation}\label{controlled_cl_loss}
J^{\bm \mu}_{\rm cl}(\bm X,\gamma)\coloneqq \sE^\sP \left[\int_{\sT}\int_A  f(t,X_t,a,\mu_t)\gamma(\d a|t,X_t)\,\d t+g(X_T,\mu_T) \right]
 \end{equation} 
 for   any  
  state-policy pair $(\bm X,\gamma)$ 
on     $(\Omega, \cF,  \sP)$. 
Under  Condition (H.\ref{assm:diffusion_state}),
 $\inf_{(\bm X,\gamma)\in \cA_{\rm cl}(\bm \mu) } J^{\bm \mu}_{\rm cl}(\bm X,\gamma)<\infty$
 if and only if $\cA_{\rm cl}(\bm \mu)\not=\emptyset$.

   \paragraph{MFG and its NE.}

In this MFG, 
the representative player optimizes \eqref{controlled_cl_loss}
by considering the  mean field flow $\bm \mu\in \cP(\sR^d|\sT )$ as given.
An NE of the MFG  is then achieved when 
the state process of an optimal  state-policy pair  for \eqref{controlled_cl_loss}
has the distribution $\bm \mu$.

\begin{Definition}
\label{def:NE}
Let $\bm \mu^*\in \cP(\sR^d|\sT)$ and $(\bm X^*, \gamma^*)\in \cA_{\rm cl}(\bm \mu^*)$ be a state-policy pair
    on a probability space
$(\Omega, \cF,   \sP)$.
We say the tuple    $( \bm \mu^*, \bm X^*, \gamma^*) $  is an NE   for the MFG
if 
\begin{enumerate}[(1)]
\item
\label{item:optimality}
 $(\bm X^*,\gamma^*)$ is optimal  when  $\bm \mu^*$ is the given mean field  flow, i.e., 
$J^{\bm \mu^*}_{\rm cl}(\bm X^*,\gamma^*)\le J^{\bm \mu^*}_{\rm cl}(\bm X,\gamma)$
for all $(\bm X, \gamma)\in \cA_{\rm cl}(\bm \mu^*)$.

\item
\label{item:consistency}
$(\bm\mu^*, \bm X^*,\gamma^*)$ satisfies the consistency condition, i.e., 
$\mu^*_t =\mathscr{L}^\sP (X^*_t)$ for all $t\in \sT$.
\end{enumerate}
\end{Definition}

Definition \ref{def:NE} concerns the  closed-loop NE for the MFG,
in the sense that for a given $\bm \mu^*$, 
the representative player optimizes the cost functional 
\eqref{controlled_cl_loss}
over all Markov policies $\gamma\in \cP(A|\sT\times \sR^d)$.  
One can define analogously an NE for 
an open-loop MFG, where the   representative player optimizes  an open-loop  controlled martingale problem (see Remark \ref{rmk:control_martingale}).

Under Condition  (H.\ref{assm:diffusion_state}),   the  MFG may   have no NE or multiple NEs. 
The focus of our analysis is to characterize the set of NEs in Definition \ref{def:NE}. 
It is achieved by characterizing   the representative player's optimal controls
via a primal-dual approach, 
where the primal problem is a minimization problem over 
the space of     measures on  $\sT\times \sR^d\times A$,
and the dual problem is a  maximization problem over
  suitable   functions.

  \section{Primal formulation of     representative player's problem}
\label{sec:primal_diffusion}

To derive the primal-dual formulation 
of the MFG in Section \ref{sec:mfg},
the first step 
is to show   that for any given mean field flow 
$\bm \mu\in \cP(\sR^d |\sT)$, the representative player's control problem \eqref{controlled_cl_diffusion} is equivalent to  a linear program (LP)  over the space of     measures.
This  is established  by showing their  equivalence to an auxiliary     martingale problem. 
Such a  linear program  will be referred to   as  the primal problem.

 \paragraph{The LP.}

 Given the control problem of the representative player, consider   two variables $(\nu,\xi)\in \cX_+\coloneqq \cM_+(\sR^d)\times  \cM_{+}(\sT\times \sR^d \times A)$, 
where 
 $\nu\in \cM_+(\sR^d)$ 
 represents the distribution of the state  at the terminal time $T$, and 
 $\xi$ is the occupation measure associated with the
  stochastic control problem
(see e.g., \cite{stockbridge1990time,kurtz1998existence}), with   $\xi \in \cM_{+}(\sT\times \sR^d \times A)$ representing the state-action   distribution over the   time horizon $\sT$.

A pair of measures $(\nu,\xi)$ is in 
the admissible set $\mathcal{D}_{{P}}(\bm \mu)$ of the linear program if  
the state-time marginal  measure of $\xi$ 
coincides with 
the law of a controlled state process,
  and also is consistent with $\nu$   at the terminal time.
Specifically,
define the following linear  constraint  for  $(\nu,\xi)\in\cX_+$ such that
 \begin{align}\label{eq:condition_primal_cx_diffusion}
\begin{split}
&    \int_{\sR^d}  \psi(T,x)\nu(\d x)  
 - \int_{\sR^d} \psi(0,x)\rho(\d x)
\\
 &\quad  =      \int_{\sT \times \sR^d \times A}\Big(\big(\sL^{\bm \mu}  \psi\big)(t,x,a)+(\partial_t\psi)(t,x)\Big)\xi(\d t, \d x,  \d a),
    \quad \forall \psi \in \cW\coloneq C^{1,2}_b(\sT\times \sR^d),
    \end{split}
\end{align}
where the operator 
 $\sL^{\bm \mu}$ is given by
\begin{equation}
\label{eq:generator_diffusion}
(\sL^{\bm \mu} \psi)(t, x, a) =
\frac{1}{2}\textrm{tr}\big((\sigma\sigma^\top) (t,x,a, \mu_t)(\textrm{Hess}_x\psi)(t,x)\big)
+b(t,x,a, \mu_t)^\top 
(\nabla_x\psi)(t,x). 
\end{equation}
Then the admissible set  $ \mathcal{D}_{{P}}(\bm \mu)\subset \cX_+
$ is defined by
\begin{equation} 
\label{feasible_set_diffusion}
 \mathcal{D}_{{P}}(\bm \mu)
 \coloneqq 
\left\{ 
(\nu,\xi)\in \cX_+ \, \big\vert \,  (\nu,\xi) \text{ satisfies } \eqref{eq:condition_primal_cx_diffusion}
\right\}.
\end{equation}

Given the  set $\mathcal{D}_{{P}}(\bm \mu)$ in \eqref{feasible_set_diffusion},
the  representative player considers the following   LP: 
\begin{equation}
\label{Linear_programming_diffusion}
\tag{LP}
   \inf_{(\nu,\xi)\in \mathcal{D}_P(\bm \mu) }J^{\bm \mu}_P(\nu,\xi),
\end{equation}
where
for all $(\nu,\xi)\in\cX_+$,
\begin{equation}
\label{eq:object_primal_cX_diffusion}
J_{{P}}^{\bm \mu} (\nu,\xi)\coloneqq  \int_{\sT \times \sR^d\times A}f(t,x,a,\mu_t)\xi (\d t, \d x,  \d a)+\int_{\sR^d }g(x,\mu_T)\nu(\d x),
\end{equation}
with the same cost functions $f$  and $g$ as in \eqref{controlled_cl_diffusion}.

Note that   the 
\eqref{Linear_programming_diffusion} is 
 over  measures 
 $\nu\in \cM_+(\sR^d)$ and $\xi \in \cM_{+}(\sT\times \sR^d \times A)$.
This is different from the linear programming formulation in  
\cite{dumitrescu2021control},
which replaces  $\xi$ by 
 a flow of finite measures 
$  t\mapsto m_t(\d x, \d a)$ that  is   integrable with respect to the Lebesgue measure over $\sT$.
The   formulation 
\eqref{Linear_programming_diffusion}
has the advantage 
  that its dual problem
    is easier to characterize, which subsequently allows for characterizing the set of NEs for the MFG. This will be clear from our subsequent analysis.

\paragraph{Equivalence between 
  \eqref{controlled_cl_diffusion}   and \eqref{Linear_programming_diffusion}.}

In order to establish the equivalence of two problems 
   \eqref{controlled_cl_diffusion}   and \eqref{Linear_programming_diffusion}, we first  connect the feasible sets 
$\cA_{\rm cl}(\bm \mu)$ and 
$\mathcal{D}_{{P}}(\bm \mu)$ through an auxiliary  controlled martingale problem. 

The following proposition establishes one direction of the claim: an admissible state-policy pair $(\bm X, \gamma)$ in $ \mathcal{A}_{\rm cl}(\bm \mu) $ induces an   admissible occupation measure  $\xi$.  Given  $(\bm X, \gamma)$, $\xi$  describes the amount of time the joint state and control process
spends in each region of the state and control space over the time horizon $\sT$. 
\begin{Proposition}
\label{prop:cl_measure}
     Suppose (H.\ref{assm:diffusion_state})
holds. Let $\bm \mu \in \cP(\sR^d|\sT)$
and 
let $(\bm X,\gamma)\in \cA_{\rm cl}(\bm \mu)$ be defined on   a probability space $(\Omega, \cF,\sP)$.
 Define 
$\nu\in \cM_+(\sR^d)$
and 
$\xi\in \cM_+(\sT\times \sR^d\times  A)$
such that
for all $F_1\in \cB(\sR^d)$ 
and  $F_2 \in \cB(\sT\times \sR^d\times A)$, 
\begin{equation}
\label{eq:superposition}
\nu(F_1)\coloneqq\sP(X_T\in F_1),\quad \xi(F_2)\coloneqq  \sE^{\sP}\bigg[\int_{\sT\times A } \mathds{1}_{\{(t,X_t,a)\in F_2\}} \gamma(\d a |t,X_t) \d t\bigg].
    \end{equation}
 Then 
$(\nu,\xi)\in \cD_P(\bm \mu)$
and   $J^{\bm \mu}_{\rm cl}(\bm X,  \gamma)=J^{\bm \mu}_P(\nu,\xi) $.

 \end{Proposition}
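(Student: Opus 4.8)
The plan is to derive both assertions — admissibility $(\nu,\xi)\in\cD_P(\bm\mu)$ and the cost identity — directly from It\^o's formula applied to the weak solution $\bm X$, with the relaxed policy $\gamma$ entering only through the defining formulas \eqref{eq:def_bsigmamu}. \emph{Step 1: $\nu,\xi$ are finite measures, plus a change-of-variables identity.} First I would check that \eqref{eq:superposition} defines finite nonnegative measures: $\nu=\mathscr{L}^\sP(X_T)\in\cP(\sR^d)$ is immediate, while for $\xi$ I would use that $\bm X$ has continuous paths, so $(t,\omega)\mapsto X_t(\omega)$ is $\cB(\sT)\otimes\cF$-measurable, and that $\gamma\in\cP(A|\sT\times\sR^d)$ is a probability kernel; hence for each $F_2\in\cB(\sT\times\sR^d\times A)$ the integrand in \eqref{eq:superposition} is jointly measurable and $[0,1]$-valued, Tonelli's theorem gives that $\xi(F_2)$ is well-defined, $\sigma$-additivity follows from monotone convergence, and $\xi(\sT\times\sR^d\times A)=T<\infty$. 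A monotone-class argument (indicators, then simple, then bounded measurable functions) then upgrades \eqref{eq:superposition} to
\begin{equation}\label{eq:cov-plan}
\int_{\sT\times\sR^d\times A}h(t,x,a)\,\xi(\d t,\d x,\d a)=\sE^\sP\Big[\int_\sT\!\!\int_A h(t,X_t,a)\,\gamma(\d a|t,X_t)\,\d t\Big]
\end{equation}
for every bounded measurable $h:\sT\times\sR^d\times A\to\sR$.

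\emph{Step 2: verifying the constraint \eqref{eq:condition_primal_cx_diffusion}.} Fixing $\psi\in\cW=C^{1,2}_b(\sT\times\sR^d)$, I would apply It\^o's formula to $s\mapsto\psi(s,X_s)$ along the weak solution of \eqref{eq:state_diffusion}: since the quadratic covariation of $\bm X$ is $\int_0^{\,\cdot}(\sigma^{\bm\mu,\gamma}(\sigma^{\bm\mu,\gamma})^\top)(s,X_s)\,\d s$, this writes $\psi(T,X_T)-\psi(0,X_0)$ as the sum of a $\d s$-integral of $\partial_s\psi+(b^{\bm\mu,\gamma})^\top\nabla_x\psi+\tfrac12\tr(\sigma^{\bm\mu,\gamma}(\sigma^{\bm\mu,\gamma})^\top\textrm{Hess}_x\psi)$ and a stochastic integral $\int_0^T(\nabla_x\psi)^\top\sigma^{\bm\mu,\gamma}(s,X_s)\,\d W_s$. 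By \eqref{eq:def_bsigmamu} and symmetry of the principal square root, $\sigma^{\bm\mu,\gamma}(\sigma^{\bm\mu,\gamma})^\top(s,x)=\int_A(\sigma\sigma^\top)(s,x,a,\mu_s)\,\gamma(\d a|s,x)$, so the $\d s$-integrand rewrites as $\partial_s\psi(s,X_s)+\int_A(\sL^{\bm\mu}\psi)(s,X_s,a)\,\gamma(\d a|s,X_s)$. I would then take $\sP$-expectations: the stochastic integral has zero mean because $\nabla_x\psi$ and $\sigma$ (hence $\sigma^{\bm\mu,\gamma}$) are bounded, so it is a true martingale; using $\mathscr{L}^\sP(X_0)=\rho$, $\nu=\mathscr{L}^\sP(X_T)$, and \eqref{eq:cov-plan} with the bounded measurable integrand $h(t,x,a)=\partial_t\psi(t,x)+(\sL^{\bm\mu}\psi)(t,x,a)$, the resulting identity is exactly \eqref{eq:condition_primal_cx_diffusion}, so $(\nu,\xi)\in\cD_P(\bm\mu)$.

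\emph{Step 3 and main difficulty.} For the cost identity I would apply \eqref{eq:cov-plan} with $h(t,x,a)=f(t,x,a,\mu_t)$, bounded and measurable by (H.\ref{assm:diffusion_state}), to the running-cost term and use $\nu=\mathscr{L}^\sP(X_T)$ for the terminal term, and then compare \eqref{controlled_cl_loss} with \eqref{eq:object_primal_cX_diffusion}. I expect no genuine obstacle: the substance is It\^o's formula combined with the relaxed-control bookkeeping in \eqref{eq:def_bsigmamu}. The two points needing care are (i) that $\xi$ is a bona fide finite measure and that \eqref{eq:cov-plan} holds for all bounded measurable $h$ — this relies on joint measurability of $(t,\omega)\mapsto X_t$ and the monotone-class argument; and (ii) the applicability of It\^o's formula and the true-martingale property of the stochastic integral for a possibly only weak solution $\bm X$, both licensed by $\psi\in C^{1,2}_b$ and the boundedness in (H.\ref{assm:diffusion_state}). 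The genuinely hard direction — that \emph{every} admissible $(\nu,\xi)$ arises from some state-policy pair — is the converse implication, handled separately via the superposition principle (Proposition \ref{prop:measure_diffusion}).
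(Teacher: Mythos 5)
Your proposal is correct and follows essentially the same route as the paper's proof: verify that $\nu,\xi$ are finite measures, apply It\^o's formula to $\psi(\cdot,X_\cdot)$ for $\psi\in C^{1,2}_b$, take expectations (the stochastic integral vanishes by boundedness), and use the change-of-variables identity for $\xi$ together with $\mathscr{L}^\sP(X_0)=\rho$ and $\nu=\mathscr{L}^\sP(X_T)$ to obtain \eqref{eq:condition_primal_cx_diffusion} and the cost identity. The only difference is that you spell out the measurability/monotone-class details behind the change-of-variables identity, which the paper states directly.
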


The converse direction that 
 an   admissible occupation measure  $\xi$
 induces a weak solution of the controlled diffusion process  is  more involved. 
 Our main technical tool is   the superposition principle for diffusion processes \cite{trevisan2016well}.
It allows for   lifting
  a  measure-valued solution of 
    the    FP equation for a given diffusion process
  to a solution of its corresponding martingale problem, and simultaneously   
  preserving  the time-marginals of this measure-valued process.

To this end, we first connect an occupation measure $\xi$ with a Markov policy $\gamma$  using  the following disintegration theorem  
 (see \cite[Theorem 14.D.10]{baccelli2020random}).

\begin{Lemma}
\label{lemma:disintegration}
Let $(X, \cX)$ and $(Y, \cY)$ be two measurable spaces, and assume $(Y,\cY)$ is Polish.
Let $\mu\in \cM_+(X\times Y)$ 
and  $\mu^{X}$ be the marginal measure of $\mu$ on $(X,\cX)$ defined by 
 $\xi^X(F) =\xi(F\times Y)$ for all $F\in \cX$. 
Then there exists a unique $\xi^X$-almost everywhere 
$\kappa \in \cP(Y|X)$
such that  
for all  $ F\in \cX$ and  $E\in \cY$,
$ 
\mu(F \times E)=\int_F \kappa(E|x) \xi^X(\d x)$.
The kernel $\kappa$ is called the 
 disintegration (probability) kernel
 of $\mu$ with respect to $\mu^X$. 

\end{Lemma}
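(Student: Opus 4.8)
The plan is to deduce this from the existence of regular conditional distributions on a standard Borel space, which is classical. First I would dispose of trivialities: if $\mu^X(X)=0$ then $\mu(X\times Y)=0$, so any kernel works and uniqueness is vacuous; otherwise, since the identity $\mu(F\times E)=\int_F\kappa(E|x)\,\mu^X(\d x)$ is homogeneous of degree one in $\mu$ (and $\mu^X$ scales by the same constant), dividing $\mu$ by $\mu(X\times Y)$ reduces everything to the case where $\mu$ is a probability measure. In that case I would view $(X\times Y,\cX\otimes\cY,\mu)$ as a probability space with coordinate projections $\pi_X,\pi_Y$, and observe that the desired kernel is exactly a regular conditional distribution of $\pi_Y$ given $\pi_X$; once such a $\kappa\in\cP(Y|X)$ is produced, the identity $\mu(F\times E)=\int_F\kappa(E|x)\,\mu^X(\d x)$ holds for all $F\in\cX$, $E\in\cY$ by the defining property of conditional expectation.

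To construct $\kappa$, I would use that a Polish space $Y$ is Borel-isomorphic to a Borel subset of $[0,1]$, so without loss of generality $Y\subseteq\mathbb{R}$ with its Borel $\sigma$-algebra. For each rational $q$, the set function $F\mapsto\mu\big(F\times((-\infty,q]\cap Y)\big)$ on $\cX$ is absolutely continuous with respect to $\mu^X$, so by Radon--Nikodym it has a density $x\mapsto H_q(x)\in[0,1]$. A countable family of monotone and dominated convergence arguments then shows that, outside a single $\mu^X$-null set $N$, the map $q\mapsto H_q(x)$ is nondecreasing, right-continuous along rationals, and tends to $0$ and $1$ as $q\to-\infty$ and $q\to+\infty$; redefining $H_q$ arbitrarily on $N$, I extend $q\mapsto H_q(x)$ to a genuine cumulative distribution function and let $\kappa(\cdot|x)$ be the associated Lebesgue--Stieltjes probability measure on $Y$. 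Measurability of $x\mapsto\kappa(E|x)$ holds by construction for half-lines and extends to all $E\in\cB(Y)$ by a Dynkin $\pi$--$\lambda$ argument; the same argument upgrades the disintegration identity from half-lines to arbitrary $E$, and transferring through the Borel isomorphism returns the statement for the original Polish $Y$.

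For uniqueness, if $\kappa_1,\kappa_2$ both satisfy the identity, then for each fixed $E\in\cY$ the functions $x\mapsto\kappa_i(E|x)$ have equal integrals over every $F\in\cX$, hence agree $\mu^X$-a.e.; since $Y$ is second countable, $\cY$ is generated by a countable $\pi$-system $\cE$, so the countably many exceptional null sets can be unioned into one $\mu^X$-null set, off which $\kappa_1(\cdot|x)$ and $\kappa_2(\cdot|x)$ agree on $\cE$ and therefore on all of $\cY$ by Dynkin's theorem. The step I expect to be delicate is producing a version of the conditional CDF that is simultaneously a legitimate distribution function in $y$ for every $x$ outside one null set and measurable in $x$; this is precisely where Polishness (via the standard Borel structure, i.e. the Borel isomorphism with a subset of $[0,1]$) is essential, and it is the heart of the classical disintegration theorem, so in a write-up one would most naturally simply invoke \cite[Theorem 14.D.10]{baccelli2020random} rather than reproduce the construction.
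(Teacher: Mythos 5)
Your proposal is correct, but it takes a different route from the paper in the sense that the paper offers no proof at all: it treats this lemma as a known result and simply invokes the disintegration theorem of \cite[Theorem 14.D.10]{baccelli2020random}. What you do instead is reconstruct the classical existence proof of regular conditional distributions: normalize $\mu$ to a probability measure (legitimate, since the disintegration identity is homogeneous in $\mu$ and $\mu^X$), transport $Y$ to a Borel subset of $[0,1]$ via a Borel isomorphism, build conditional distribution functions from Radon--Nikodym densities of $F\mapsto\mu\big(F\times((-\infty,q]\cap Y)\big)$ with respect to $\mu^X$ for rational $q$, repair monotonicity and the limit behaviour off a single $\mu^X$-null set, and pass to the induced Lebesgue--Stieltjes kernels; uniqueness via a countable generating $\pi$-system and Dynkin's theorem is exactly right, as is your observation that the conditioning space $(X,\cX)$ may be an arbitrary measurable space, only $Y$ needs to be Polish. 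The one step you gloss over is the transfer back through the Borel isomorphism: the constructed kernel a priori lives on $[0,1]$, so you must check that $\kappa(\cdot|x)$ assigns full mass to the image of $Y$ for $\mu^X$-a.e.\ $x$ (which follows since $\mu$ pushes forward onto $X\times B$ with $B$ the image, so the density argument gives $\kappa(B|x)=1$ a.e.) and redefine $\kappa(\cdot|x)$ as a fixed Dirac mass on the exceptional null set; this is routine and you essentially flag it yourself. The trade-off is clear: your write-up is self-contained and makes visible exactly where Polishness of $Y$ enters, while the paper's citation is shorter and defers the measure-theoretic work to a standard reference--either is acceptable here.
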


Using Lemma \ref{lemma:disintegration},
any $\xi\in \cM_+(\sT\times \sR^d\times A)$ 
 can be represented as 
\begin{equation}
\label{eq:decomposition}
 \xi(\d t,\d x,\d a)=
 \gamma(\d a|t,x)\xi^{\sT\times \sR^d}(\d t,\d x)
 = \gamma(\d a|t,x)m^X(\d x|t) 
 \xi^\sT(\d t),
 \end{equation}
 with $ \gamma \in \cP(A|\sT\times \sR^d)$ and 
 $m^X\in \cP (\sR^d|\sT)$. 
We will write $m^X_t (\d x) =m^X(\d x|t)$ interchangeably.  
 Given the decomposition \eqref{eq:decomposition},
we then identify (a version of) the curve  $m^X=(m^X_t)_{t\in \sT}\subset \cP(\sR^d)$
as a narrowly continuous weak solution of the   
following  equation:
 \begin{align}
 \label{eq:FP_mu_gamma}
     \partial_t m^X_t  = (\sL^{\bm \mu,\gamma})^\dagger m^X_t,
     \quad \forall (t,x)\in \sT\times \sR^d;
     \quad m^X_0=\rho, 
 \end{align}
where the  operator $\sL^{\bm \mu,\gamma} $  is defined by 
 \begin{align}
 \label{eq:L_mu_gamma}
        \big(\sL^{\bm \mu,\gamma}  \psi\big)(t,x)\coloneqq \frac{1}{2}\operatorname{tr}\Big(\big(\sigma^{\bm \mu, \gamma}(\sigma^{\bm \mu, \gamma})^\top\big) (t,x)(\operatorname{Hess}_x\psi)(t,x)\big)
        +b^{\bm \mu, \gamma}(t,x)^\top 
        (\nabla_x\psi)(t,x)
 \end{align}
    with the policy $\gamma$ in \eqref{eq:decomposition} 
    and 
    functions  $b^{\bm \mu, \gamma}$ and $\sigma^{\bm \mu, \gamma}$   given  in \eqref{eq:def_bsigmamu},
    and 
    $(\sL^{\bm \mu,\gamma})^\dagger $ is the adjoint operator of $\sL^{\bm \mu,\gamma} $.

\begin{Proposition}\label{prop:cont_margin}
    Suppose (H.\ref{assm:diffusion_state})
holds. Let $\bm \mu \in \cP(\sR^d|\sT)$ and $(\nu,\xi)\in  \mathcal{D}_{{P}}(\bm \mu)$.  
    \begin{enumerate}[(1)]
        \item\label{item:timemargin_Les_cont} 
The    time marginal measure       $\xi^\sT $ of $\xi$  is  the Borel measure on $\sT$, i.e.,
it is 
 the  restriction  of the Lebesgue measure to the $\sigma$-algebra $\cB(\sT)$.      
        
        \item \label{item:statemargin}
    Let $\gamma\in\cP(A|\sT\times \sR^d)$ be the disintegration of $\xi$ with respect to $\xi^{\sT\times \sR^d}$, and $m^X \in \cP (\sR^d|\sT)$    
    be the disintegration of $\xi^{\sT\times \sR^d}$ with respect to $\xi^\sT$. Then 
$m^X =(m^X_t)_{t\in \sT}\subset \cP(\sR^d)$ is a weak solution of \eqref{eq:FP_mu_gamma}, 
    in the sense that 
   for all $\psi \in \cW$,
        \begin{align}\label{eq:fpe}
         &\int_{\sR^d}  \psi(T,x)\nu(\d x)  
         - \int_{\sR^d} \psi(0,x)\rho(\d x)=      \int_{\sT \times \sR^d}\Big(\big(\sL^{\bm \mu,\gamma}  \psi\big)(t,x)+(\partial_t\psi)(t,x)\Big)m_t^X(\d x)\d t. 
        \end{align}
\item 
\label{item:statemargin_narrowcont_cont} 
There exists a version of $m^X$ 
that is narrowly continuous, in the sense that
for all $\phi\in C_b(\sR^d)$,
         $t\mapsto \int_{\sR^d} \phi(x) m^X_t(\d x)$ is continuous. 
         Moreover,  $m_0^X=\rho$ and $m_T^X=\nu$.
         
             \end{enumerate}   
\end{Proposition}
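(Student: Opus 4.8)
The plan is to prove the three assertions in order, using only test-function substitutions in the linear constraint \eqref{eq:condition_primal_cx_diffusion}, the disintegration Lemma \ref{lemma:disintegration}, and a standard tightness-and-compactness argument for (3); no superposition principle is needed at this stage. For (1), I would first take $\psi\equiv 1$ in \eqref{eq:condition_primal_cx_diffusion}, which gives $\nu(\sR^d)=\rho(\sR^d)=1$, so $\nu\in\cP(\sR^d)$. Then, given an arbitrary $h\in C(\sT)$, set $\phi(t):=\int_0^t h(s)\,\d s\in C^1_b(\sT)$ and apply \eqref{eq:condition_primal_cx_diffusion} with $\psi(t,x):=\phi(t)\in\cW$; since $\psi$ has no spatial dependence, $\sL^{\bm\mu}\psi\equiv 0$ and the identity collapses to $\int_\sT h\,\d\xi^\sT=\phi(T)-\phi(0)=\int_\sT h(t)\,\d t$. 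As $\xi^\sT$ and the Lebesgue measure are finite Borel measures on the compact space $\sT$ that agree against every $h\in C(\sT)$, they coincide.

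For (2), combining (1) with \eqref{eq:decomposition} gives $\xi(\d t,\d x,\d a)=\gamma(\d a|t,x)\,m^X_t(\d x)\,\d t$. Fixing $\psi\in\cW$, the boundedness of $b,\sigma$ from (H.\ref{assm:diffusion_state}) and of $\nabla_x\psi,\mathrm{Hess}_x\psi,\partial_t\psi$ makes Fubini's theorem applicable. Integrating $(\sL^{\bm\mu}\psi)(t,x,a)$ against $\gamma(\d a|t,x)$, and noting that the principal square root of the positive semidefinite matrix $\int_A(\sigma\sigma^\top)(t,x,a,\mu_t)\gamma(\d a|t,x)$ is symmetric, so its product with its transpose recovers that matrix, I obtain from \eqref{eq:def_bsigmamu}--\eqref{eq:L_mu_gamma} that $\int_A(\sL^{\bm\mu}\psi)(t,x,a)\gamma(\d a|t,x)=(\sL^{\bm\mu,\gamma}\psi)(t,x)$. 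Substituting this into \eqref{eq:condition_primal_cx_diffusion} (and using that $\gamma(\cdot|t,x)$ is a probability measure for the $\partial_t\psi$ term) yields \eqref{eq:fpe} verbatim.

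Part (3) is the substantive step. For $\varphi\in C^2_b(\sR^d)$ and $\theta\in C^1_b(\sT)$ the function $(t,x)\mapsto\theta(t)\varphi(x)$ lies in $\cW$, and inserting it into \eqref{eq:fpe} yields
\[
\theta(T)\!\int_{\sR^d}\!\varphi\,\d\nu-\theta(0)\!\int_{\sR^d}\!\varphi\,\d\rho=\int_\sT\theta(t)G_\varphi(t)\,\d t+\int_\sT\theta'(t)F_\varphi(t)\,\d t,
\]
where $F_\varphi(t):=\int\varphi\,\d m^X_t$ and $G_\varphi(t):=\int(\sL^{\bm\mu,\gamma}\varphi)(t,\cdot)\,\d m^X_t$ are bounded and measurable in $t$ (boundedness of the coefficients, $m^X_t\in\cP(\sR^d)$ for a.e.\ $t$). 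Testing with $\theta\in C^\infty_c((0,T))$ shows $F_\varphi'=G_\varphi\in L^1(\sT)$ in the distributional sense, hence $F_\varphi$ admits an absolutely continuous representative $\tilde F_\varphi$ on $[0,T]$; reinserting $\tilde F_\varphi$ with a general $\theta$ and integrating by parts identifies $\tilde F_\varphi(0)=\int\varphi\,\d\rho$ and $\tilde F_\varphi(T)=\int\varphi\,\d\nu$. Separately, testing \eqref{eq:fpe} with smooth radial cut-offs $\varphi_R$ that vanish on $B_R$, equal $1$ off $B_{2R}$, and satisfy $|\nabla\varphi_R|\le C/R$, $|\mathrm{Hess}\,\varphi_R|\le C/R^2$, and using the boundedness of $b,\sigma$, gives $\sup_{t\in\sT}m^X_t(\sR^d\setminus B_{2R})\le\rho(\sR^d\setminus B_R)+C'/R$, so $\{m^X_t\}_{t\in\sT}$ is tight. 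I would then fix a countable $\{\varphi_k\}\subset C^\infty_c(\sR^d)\subset C^2_b(\sR^d)$ with dense linear span in $C_0(\sR^d)$, let $I\subset\sT$ be the (full-measure) set on which $F_{\varphi_k}(t)=\tilde F_{\varphi_k}(t)$ for all $k$ and $m^X_t\in\cP(\sR^d)$, and use tightness together with Prokhorov's theorem to produce, for every $t\in\sT$, the unique $\tilde m_t\in\cP(\sR^d)$ with $\int\varphi_k\,\d\tilde m_t=\tilde F_{\varphi_k}(t)$ for all $k$. Then $t\mapsto\tilde m_t$ is narrowly continuous, equals $m^X_t$ on $I$ (so is a version of $m^X$), and the boundary values force $\tilde m_0=\rho$, $\tilde m_T=\nu$.

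I expect the main obstacle to be this last assembly step: upgrading the family of scalar absolutely continuous maps $\{\tilde F_\varphi\}$ to a single narrowly continuous measure-valued curve. It relies on the a priori tightness bound ruling out escape of mass — here immediate from the boundedness of $b$ and $\sigma$ — and on the Prokhorov/determining-class argument; both are by now standard in the Fokker--Planck literature (cf.\ \cite{trevisan2016well} and the references therein), but need to be set up with some care. Parts (1) and (2), by contrast, amount to bookkeeping.
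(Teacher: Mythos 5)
Your proposal is correct and follows the same overall architecture as the paper's proof: test functions in the constraint \eqref{eq:condition_primal_cx_diffusion} for the time marginal, the disintegration identity for \eqref{eq:fpe}, and a countable family of test functions with absolutely continuous representatives plus a compactness argument for the narrowly continuous version. The differences are in the details and are worth noting. For part (1), you take an arbitrary $h\in C(\sT)$, test with its antiderivative, and invoke uniqueness in the Riesz representation (after first normalizing $\nu(\sR^d)=1$ via $\psi\equiv 1$); the paper instead approximates piecewise-linear functions $t\mapsto (a-t)^+$ and passes to the limit to compute $\xi^{\sT}((0,a))=a$ directly. Both work; yours is a bit cleaner. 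For part (3), the paper (following the argument of Lemma 2.3 in the cited flow-of-measures reference) extends the functionals $F(\cdot,t)$ by density to $C_c(\sR^d)$ and invokes Riesz representation, asserting the representing measures are probability measures so that vague continuity upgrades to narrow continuity; you instead prove a quantitative tightness bound via radial cutoffs and use Prokhorov. Your route makes explicit the mass-conservation point that the paper leaves implicit, which is a genuine (if small) gain in rigor. One presentational caveat: the bound $m^X_t(\sR^d\setminus \sB_{2R}(0))\le \rho(\sR^d\setminus \sB_R(0))+C'/R$ does not follow from testing \eqref{eq:fpe} with a time-independent cutoff alone (that only yields a time-integrated statement); it follows, for a.e.\ $t$, from the absolutely continuous representative $\tilde F_{\varphi_R}$ with $\tilde F_{\varphi_R}(0)=\int\varphi_R\,\d\rho$ and $|\tilde F_{\varphi_R}'|\le C'/R$, which you have already set up in the same paragraph; since your limiting construction only evaluates $m^X_{t_n}$ at times $t_n$ in the full-measure set $I$, the a.e.\ version suffices, so this is an imprecision of wording rather than a gap.
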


We finally apply the superposition principle 
in \cite[Theorem 2.5]{trevisan2016well}
to link the narrowly continuous curve $m^X$ 
in Proposition \ref{prop:cont_margin}
to a solution of the martingale problem 
associated with the generator $\sL^{\bm \mu,\gamma} $,
and further to a weak solution to the controlled SDE \eqref{eq:state_diffusion}.

\begin{Proposition}
\label{prop:measure_diffusion}
 Suppose (H.\ref{assm:diffusion_state})
holds. Let $\bm \mu \in \cP(\sR^d|\sT)$ and $(\nu,\xi)\in  \mathcal{D}_{{P}}(\bm \mu)$.   
Let $m^X$ be the narrowly continuous weak solution of 
\eqref{eq:FP_mu_gamma} constructed in Proposition \ref{prop:cont_margin}.
Then there exists 
a probability measure $\sP\in \cP(C([0,T];\sR^d))$
  such that 
for all  $\psi \in \cW$, the process 
  $$
 t\mapsto  M^\psi_t\coloneqq \psi(t,\omega(t) )-\int_0^t 
\Big(\big(\sL^{\bm \mu,\gamma}  \psi\big)(s,\omega(s))+(\partial_t\psi)(s,\omega(s))\Big) \d s
  $$
  is a   martingale with respect to the natural filtration on
  $C([0,T];\sR^d)$, with $t\mapsto \omega(t)$ being the canonical process,
  and $\cL^\sP(\omega(t))=m^X_t$ for all $t\in [0,T]$.

  Consequently, there exists  
a process $\bm X$, defined
on some probability space $(\Omega, \cF,\sP)$, 
 such that 
$(\bm X,\gamma)\in \cA_{\rm cl}(\bm \mu)$,  
and 
$J^{\bm \mu}_{\rm cl}(\bm X,  \gamma)=J^{\bm \mu}_P(\nu,\xi) $.
\end{Proposition}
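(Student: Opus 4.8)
The plan is to run the narrowly continuous curve $m^X$ from Proposition \ref{prop:cont_margin} through the superposition principle to obtain a solution of the martingale problem for $\sL^{\bm\mu,\gamma}$, and then to upgrade that solution to a weak solution of the controlled SDE \eqref{eq:state_diffusion}, matching costs along the way.

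First I would check the hypotheses of \cite[Theorem 2.5]{trevisan2016well}. The drift $b^{\bm\mu,\gamma}$ and the diffusion matrix $a^{\bm\mu,\gamma}\coloneqq\sigma^{\bm\mu,\gamma}(\sigma^{\bm\mu,\gamma})^\top$ associated with the operator $\sL^{\bm\mu,\gamma}$ in \eqref{eq:L_mu_gamma} are bounded and Borel measurable: this is immediate from (H.\ref{assm:diffusion_state}) and \eqref{eq:def_bsigmamu}, since averaging bounded measurable integrands against the probability kernel $\gamma$ preserves boundedness and measurability, and the principal square root is a measurable function of a positive semidefinite matrix. Consequently the integrability requirement $\int_0^T\!\!\int_{\sR^d}\big(|b^{\bm\mu,\gamma}(t,x)|+|a^{\bm\mu,\gamma}(t,x)|\big)\,m^X_t(\d x)\,\d t<\infty$ holds trivially, each $m^X_t$ being a probability measure. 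By Proposition \ref{prop:cont_margin}, the narrowly continuous version of $m^X$ is a weak solution of the Fokker--Planck equation \eqref{eq:FP_mu_gamma} with $m^X_0=\rho$ and $m^X_T=\nu$. The time-dependent superposition principle \cite[Theorem 2.5]{trevisan2016well} then produces a probability measure $\sP$ on $C([0,T];\sR^d)$ under which the canonical process $\omega$ solves the martingale problem for $\sL^{\bm\mu,\gamma}$, i.e.\ $M^\psi$ is a $\sP$-martingale for every $\psi\in\cW$, and has one-dimensional time marginals $\mathscr{L}^\sP(\omega(t))=m^X_t$ for all $t\in[0,T]$. This yields the first assertion.

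For the second assertion I would pass from the martingale problem to a weak solution of \eqref{eq:state_diffusion}. Testing against the coordinate functions $x\mapsto x_i$ (truncated so as to lie in $\cW$, using the boundedness of $b^{\bm\mu,\gamma}$) shows that $N_t\coloneqq\omega(t)-\omega(0)-\int_0^t b^{\bm\mu,\gamma}(s,\omega(s))\,\d s$ is a continuous $\sP$-martingale, while testing against $x\mapsto x_ix_j$ identifies its quadratic covariation as $\langle N^i,N^j\rangle_t=\int_0^t a^{\bm\mu,\gamma}_{ij}(s,\omega(s))\,\d s$. By the standard representation of a continuous martingale with absolutely continuous quadratic variation (possibly after enlarging the probability space with an independent Brownian motion to accommodate degeneracy of $a^{\bm\mu,\gamma}$; see, e.g., Stroock--Varadhan or Karatzas--Shreve), there exist a filtered probability space $(\Omega,\cF,\sF,\sP)$ carrying a $d$-dimensional Brownian motion $W$ and a copy $\bm X$ of $\omega$ with $N_t=\int_0^t \sigma^{\bm\mu,\gamma}(s,X_s)\,\d W_s$, where $\sigma^{\bm\mu,\gamma}=\sqrt{a^{\bm\mu,\gamma}}$ is exactly the principal square root appearing in \eqref{eq:def_bsigmamu}. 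Hence $\bm X$ is a weak solution of \eqref{eq:state_diffusion}; since $\mathscr{L}^\sP(X_0)=m^X_0=\rho$ and $\gamma\in\cP(A|\sT\times\sR^d)$ is a Markov policy, $(\bm X,\gamma)\in\cA_{\rm cl}(\bm\mu)$.

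It remains to match the costs. Using $\mathscr{L}^\sP(X_t)=m^X_t$, Fubini's theorem, the disintegration \eqref{eq:decomposition}, and the fact that $\xi^\sT$ is the Lebesgue measure on $\sT$ (Proposition \ref{prop:cont_margin}), the running-cost term in \eqref{controlled_cl_loss} equals $\int_{\sT\times\sR^d\times A}f(t,x,a,\mu_t)\,\xi(\d t,\d x,\d a)$, and since $m^X_T=\nu$ the terminal-cost term equals $\int_{\sR^d}g(x,\mu_T)\,\nu(\d x)$; summing gives $J^{\bm\mu}_{\rm cl}(\bm X,\gamma)=J^{\bm\mu}_P(\nu,\xi)$ in the notation of \eqref{eq:object_primal_cX_diffusion}. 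I expect the main obstacle to be this last upgrade step in the degenerate regime: one must realize the martingale increment through the prescribed principal square root $\sigma^{\bm\mu,\gamma}$ rather than an arbitrary measurable square root, and enlarge the probability space without disturbing the feedback structure of $(\bm X,\gamma)$; verifying the precise measurability and integrability hypotheses of \cite{trevisan2016well} against coefficients that are only measurable in the state variable is the other point requiring care, although this is precisely the generality for which that superposition principle is designed.
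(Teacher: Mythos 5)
Your proposal is correct and follows essentially the same route as the paper: apply the superposition principle \cite[Theorem 2.5]{trevisan2016well} to the narrowly continuous curve $m^X$, pass from the martingale problem to a weak solution of \eqref{eq:state_diffusion} (the paper simply cites \cite[Proposition 4.11, Chapter 5]{karatzas1991brownian} for the step you re-derive via quadratic variation and martingale representation, including the degenerate case), and match costs through the disintegration identity and $m^X_T=\nu$.
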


 \begin{Remark}
\label{rmk:control_martingale}     
By setting 
$\bm \Lambda_t=\gamma(\d a|t,X_t)$ for all $t\in \sT$, 
Proposition \ref{prop:measure_diffusion} implies that 
given $(\nu,\xi)\in  \mathcal{D}_{{P}}(\bm \mu)$,
there exists  
 an $\sR^d\times \cP(A)$-valued process 
$(\bm X,\bm \Lambda)=(X_t,\Lambda_t)_{t\in \sT}$    on a filtered   probability space
$(\Omega, \cF,\sF, \sP)$  such that     $\mathscr{L}^\sP(X_0)=\rho$, 
$(\bm X, \bm \Lambda)$ is $\sF$-progressively measurable 
and 
for all $\psi\in C^2_b(\sR^d)$, the process
 \begin{align}\label{eq:mtgpr}
M^\psi_t \coloneqq \psi(X_t)-\psi(X_0)-\int_0^t \int_{A} (\sL^{\bm \mu}  \psi)(s,X_{s}, a )\Lambda_s(\d a)\d s, \quad t\in \sT,
 \end{align}
 is an $\sF$-martingale on $(\Omega, \cF,  \sP)$,
 with  $\sL^{\bm \mu} $   defined in \eqref{eq:generator_diffusion},
 and 
 $$
 J^{\bm \mu}_P(\nu,\xi)  = J^{\bm \mu}_{\rm op}(\bm X,\bm \Lambda)
 \coloneqq  \sE^\sP \left[\int_{\sT} \int_A  f(t,X_t,a,\mu_t)\Lambda_t(\d a)\,\d t+g(X_T,\mu_T) \right]. 
 $$
 In fact, one can show minimizing 
$J^{\bm \mu}_{\rm op}(\bm X,\bm \Lambda)$
over all pairs $(\bm X,\bm \Lambda)$  satisfying 
 \eqref{eq:mtgpr}
 yields the same value 
 as $\inf_{(\nu,\xi)\in \mathcal{D}_P(\bm \mu) }J^{\bm \mu}_P(\nu,\xi)$.
 This establishes the equivalence between a controlled martingale problem and an LP formulation for the control problem with measurable coefficients, without requiring the continuity of system coefficients as assumed in \cite{stockbridge1990time, kurtz1998existence, dumitrescu2021control}.

 \end{Remark}

Combining  Propositions 
\ref{prop:cl_measure} and 
\ref{prop:measure_diffusion}
shows that two formulations \eqref{controlled_cl_diffusion} and 
\eqref{Linear_programming_diffusion} of the representative player's control problem are equivalent.

\begin{Theorem}
\label{thm:primal_diffusion}
Suppose (H.\ref{assm:diffusion_state})  holds, and let $\bm \mu \in \cP(\sR^d|\sT)$. Then the   formulations 
  \eqref{controlled_cl_diffusion} and 
  \eqref{Linear_programming_diffusion}
 have the same value: 
\begin{equation}
\label{eq:representative_player_equivalence_diffusion}
\inf_{(\bm X,\gamma)\in \cA_{\rm cl}(\bm \mu) } J^{\bm \mu}_{\rm cl}(\bm X,\gamma)
 =\inf_{(\nu,\xi)\in \mathcal{D}_P(\bm \mu) }J^{\bm \mu}_P(\nu,\xi).
\end{equation}
\end{Theorem}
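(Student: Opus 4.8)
The plan is to combine Propositions \ref{prop:cl_measure} and \ref{prop:measure_diffusion}, which together furnish a value-preserving correspondence between the feasible sets $\cA_{\rm cl}(\bm \mu)$ and $\cD_P(\bm \mu)$ in both directions. Before invoking them I would record that, since $f$ and $g$ are bounded by (H.\ref{assm:diffusion_state}) and every $(\nu,\xi)\in\cD_P(\bm\mu)$ has $\nu\in\cP(\sR^d)$ with $\xi^\sT$ the Lebesgue measure on $\sT$ by Proposition \ref{prop:cont_margin}, the functional $J^{\bm\mu}_P$ is finite on $\cD_P(\bm\mu)$; likewise $J^{\bm\mu}_{\rm cl}$ is finite on $\cA_{\rm cl}(\bm\mu)$. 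Hence each side of \eqref{eq:representative_player_equivalence_diffusion} is a finite real number whenever the corresponding feasible set is nonempty, and equals $+\infty$ (with the convention $\inf\emptyset=+\infty$) otherwise.

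First I would dispose of the degenerate case. If $\cA_{\rm cl}(\bm \mu)=\emptyset$, then $\cD_P(\bm\mu)$ must also be empty: otherwise, by Proposition \ref{prop:measure_diffusion}, any $(\nu,\xi)\in\cD_P(\bm\mu)$ would produce a pair $(\bm X,\gamma)\in\cA_{\rm cl}(\bm\mu)$, contradicting emptiness. Thus both sides of \eqref{eq:representative_player_equivalence_diffusion} equal $+\infty$ and the identity is trivial.

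Next, assuming both feasible sets are nonempty, I would establish the two inequalities separately. For $\inf_{(\nu,\xi)\in\cD_P(\bm\mu)}J^{\bm\mu}_P(\nu,\xi)\le\inf_{(\bm X,\gamma)\in\cA_{\rm cl}(\bm\mu)}J^{\bm\mu}_{\rm cl}(\bm X,\gamma)$: fix an arbitrary $(\bm X,\gamma)\in\cA_{\rm cl}(\bm\mu)$; Proposition \ref{prop:cl_measure} yields $(\nu,\xi)\in\cD_P(\bm\mu)$ with $J^{\bm\mu}_P(\nu,\xi)=J^{\bm\mu}_{\rm cl}(\bm X,\gamma)$, so the LP infimum is $\le J^{\bm\mu}_{\rm cl}(\bm X,\gamma)$, and taking the infimum over $(\bm X,\gamma)$ gives the claim. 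For the reverse inequality: fix an arbitrary $(\nu,\xi)\in\cD_P(\bm\mu)$; Proposition \ref{prop:measure_diffusion} yields $(\bm X,\gamma)\in\cA_{\rm cl}(\bm\mu)$ with $J^{\bm\mu}_{\rm cl}(\bm X,\gamma)=J^{\bm\mu}_P(\nu,\xi)$, and the symmetric argument applies. Combining the two inequalities yields \eqref{eq:representative_player_equivalence_diffusion}.

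I expect no real obstacle at the level of this theorem itself — the argument is pure bookkeeping once the propositions are in hand. The substantive difficulty lies entirely in Proposition \ref{prop:measure_diffusion}, the converse direction, where one must reconstruct a bona fide weak solution of the controlled SDE \eqref{eq:state_diffusion} from an abstract feasible measure. That reconstruction proceeds by disintegrating $\xi$ into a Markov kernel $\gamma$ and a curve $m^X$ (Lemma \ref{lemma:disintegration}), identifying $m^X$ as a narrowly continuous weak solution of the Fokker–Planck equation \eqref{eq:FP_mu_gamma} (Proposition \ref{prop:cont_margin}), and then applying the superposition principle of \cite{trevisan2016well} to lift $m^X$ to a solution of the associated martingale problem — the delicate point being that $b^{\bm\mu,\gamma}$ and $\sigma^{\bm\mu,\gamma}$ are merely measurable, so no classical or pathwise well-posedness is available. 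The forward direction (Proposition \ref{prop:cl_measure}) is comparatively routine: it follows from Itô's formula applied to $\psi(t,X_t)$ for $\psi\in\cW$ together with the definition \eqref{eq:superposition} of the occupation measure.
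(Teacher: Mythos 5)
Your proposal is correct and follows exactly the paper's route: the theorem is obtained by combining Proposition \ref{prop:cl_measure} (each state-policy pair induces a feasible pair $(\nu,\xi)$ with equal cost) and Proposition \ref{prop:measure_diffusion} (each feasible pair induces a state-policy pair with equal cost), which gives both inequalities in \eqref{eq:representative_player_equivalence_diffusion}; your explicit treatment of the empty-feasible-set case is a harmless addition consistent with the paper's remark that the infimum is finite iff $\cA_{\rm cl}(\bm\mu)\neq\emptyset$.
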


\begin{Remark}
\label{remark:primal}    
To the best of our knowledge, Theorem \ref{thm:primal_diffusion} is the first result establishing an  equivalent LP for  a continuous-time control problem, whose state coefficients are only \emph{measurable} in time and state variables.
All prior  results on such equivalence assume  that the system coefficients are \emph{continuous} 
(see, e.g., \cite{stockbridge1990time, kurtz1998existence})  or even \emph{Lipschitz continuous} in the state variables (see \cite{dumitrescu2021control, djete2022extended}).

Note that the connection between MFGs with Lipschitz continuous coefficients and controlled martingale problems has been explored  in \cite{lacker2015mean} to establish the existence of Nash equilibria.  Here this connection is extended  to MFGs with {\it measurable coefficients}, and the martingale problem framework is further linked with LPs over occupation measures. 
Removing the continuity assumption
(as in Theorem \ref{thm:primal_diffusion})
 is crucial for our subsequent analysis for dual formulation of the LP, and in particular  solvability of the dual problem   and characterization  of the NEs set in the  MFG. This is because the optimal policy of the representative player's problem   is only measurable in the state variable, which implies  that the state process under an optimal policy is measurable in the state variable and not necessarily continuous.

\end{Remark}

\section{Dual formulation and verification of NEs}

This section introduces   a dual formulation of the representative player's primal problem.
This dual problem  is formulated as  a maximization problem over smooth subsolutions of the associated HJB equation. As we will demonstrate, the   combination of the primal   and   dual solutions provides a novel verification theorem 
for    NEs of the MFG.

\paragraph{Derivation of the dual problem.}

To derive the dual formulation of 
\eqref{Linear_programming_diffusion} we first rewrite the primal problem \eqref{Linear_programming_diffusion}       
using dual pairs of   (infinite-dimensional) topological vector spaces. 

 Let
$\cX =\cM(\sR^d) \times  \cM(\sT\times \sR^d\times A)  $,   
$ \cY= B_b(\sR^d)\times B_b(\sT \times \sR^d\times  A)$,
  $\cW= C^{1,2}_b(\sT\times \sR^d)   $ as defined  in Section \ref{sec:primal_diffusion},
and    $\cZ =\cW^\#$ be the  algebraic dual space of $\cW$ containing all linear functionals from $\cW$ to $\sR$.
And we   equip   the spaces $\cX$ and $\cZ$ with weak topologies induced by the dual pairings
between $\cX\times \cY$ and $\cZ\times \cW$, respectively. More precisely, 
let 
$\langle \cdot,\cdot \rangle_{\cX\times \cY}$ be the bilinear form on $\cX\times \cY$ such that 
for all $(\nu,\xi)\in \cX$ and $(u,\phi)\in \cY$,
$$\big\langle (\nu,\xi),(u,\phi)\big\rangle_{\cX\times \cY} \coloneqq 
\int_{\sR^d }  u(x)\nu (\d x) + \int_{\sT \times \sR^d \times A} \phi(t,x, a)\xi( \d t,\d x, \d a).$$
We also endow $\cX$ the   weak topology $\sigma(\cX,\cY)$
(called $\sigma$-topology on $\cX$) induced by the dual   pair   $\langle \cdot,\cdot \rangle_{\cX\times \cY}$, i.e., the coarsest topology under which for all   $(u,\phi) \in \cY$,  the linear map $(\nu,\xi)\mapsto \langle (\nu,\xi), (u,\phi) \rangle_{\cX\times \cY} $ is continuous. 
Similarly, we  define the bilinear form $\langle \cdot,\cdot\rangle_{\cZ\times \cW} $ on $\cZ\times \cW$ such that
 $\langle \eta, \psi\rangle_{\cZ\times \cW}\coloneqq  \eta(\psi)$ 
 for all $\psi \in \cW$ and $\eta \in \cZ$.
 We endow $\cZ$ the   weak topology $\sigma(\cZ,\cW)$
(called $\sigma$-topology on $\cZ$) induced by the dual pairing $\langle \cdot,\cdot\rangle_{\cZ\times \cW} $.

Using the dual pair  $\langle \cdot, \cdot  \rangle_{\cX\times \cY}$,
the primal problem \eqref{Linear_programming_diffusion}
can be rewritten equivalently as 
\begin{equation}
 \label{eq:primal_diffusion2}
\tag{LP*}
 \inf_{(\nu,\xi)\in  {\cD_P}(\bm \mu) }   {J}^{\bm \mu}_P(\nu,\xi),
 \quad 
 \textnormal{with $
   {J}^{\bm \mu}_P(\nu,\xi)\coloneqq \Big\langle (\nu,\xi), \big(g(\cdot,\mu_T),  f(\cdot,\cdot,\cdot,\mu_\cdot)\big)\Big\rangle_{\cX\times\cY},$}
\end{equation}
with 
 $ \mathcal{D}_{{P}}(\bm \mu)
= \{ 
(\nu,\xi)\in \cX_+
\mid \cL(\nu,\xi) =h 
  \}$,
  where 
  $\cL:\cX  \mapto \cZ$ 
  is a linear map and $h \in \cZ$   such that   for all $\psi\in \cW$, 
  \begin{equation}
  \label{eq:L_diffusion}
 \cL(\nu,\xi)(\psi) 
   \coloneqq \left\langle 
 (\nu,\xi), \big(\psi(T,\cdot), 
  -(\partial_t \psi  + \sL^{\bm \mu}  \psi )\big)\right\rangle_{\cX\times \cY}, \quad   h( \psi) \coloneqq 
   \int_{\sR^d }\psi(0,x) \rho (\d x).
  \end{equation}
The map $\cL$   is   continuous with respect to the $\sigma$-topology as shown in the following lemma.

  \begin{Lemma}
\label{lemma:L_diffusion}
    Suppose (H.\ref{assm:diffusion_state}) holds and let $\bm \mu \in \cP(\sR^d|\sT)$.  
   For each   $(\nu,\xi)\in \cX$ and $\psi\in \cW$,
   let  $  \cL(\nu,\xi)( \psi)$ be defined as in  \eqref{eq:L_diffusion}.   Then  
    $\cL:\cX  \mapto \cZ$  
     is a well-defined, $\sigma(\cX,\cY)$-$\sigma(\cZ,\cW)$-continuous linear map.

\end{Lemma}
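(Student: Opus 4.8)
The plan is to establish the three requirements—that $\cL(\nu,\xi)$ is a well-defined element of $\cZ=\cW^\#$ for each $(\nu,\xi)\in\cX$, that $\cL$ is linear in $(\nu,\xi)$, and that $\cL$ is $\sigma(\cX,\cY)$-$\sigma(\cZ,\cW)$-continuous—by reducing all of them to a single observation: for each fixed $\psi\in\cW$, the pair $\Phi(\psi)\coloneqq\big(\psi(T,\cdot),\,-(\partial_t\psi+\sL^{\bm\mu}\psi)\big)$ belongs to $\cY=B_b(\sR^d)\times B_b(\sT\times\sR^d\times A)$, so that the defining formula \eqref{eq:L_diffusion} reads $\cL(\nu,\xi)(\psi)=\big\langle(\nu,\xi),\Phi(\psi)\big\rangle_{\cX\times\cY}$.

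First I would check that $\Phi(\psi)\in\cY$. Since $\psi\in C^{1,2}_b(\sT\times\sR^d)$, its terminal slice $\psi(T,\cdot)$ is bounded and continuous on $\sR^d$, hence lies in $B_b(\sR^d)$; similarly $\partial_t\psi$, regarded as a function on $\sT\times\sR^d\times A$ that does not depend on $a$, is bounded and continuous, hence in $B_b(\sT\times\sR^d\times A)$. For the remaining term $\sL^{\bm\mu}\psi$ from \eqref{eq:generator_diffusion}, I would note that it is a finite sum of products of the coefficients $(\sigma\sigma^\top)(\cdot,\cdot,\cdot,\mu_\cdot)$ and $b(\cdot,\cdot,\cdot,\mu_\cdot)$—which are bounded and jointly measurable in $(t,x,a)$ by (H.\ref{assm:diffusion_state})—with the derivatives $\operatorname{Hess}_x\psi$ and $\nabla_x\psi$, which are bounded and continuous; hence $\sL^{\bm\mu}\psi$ is bounded and Borel measurable on $\sT\times\sR^d\times A$, so $\Phi(\psi)\in\cY$. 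This is precisely the step where the merely measurable (as opposed to continuous) dependence of $b$ and $\sigma$ on $(t,x)$ is absorbed, because $\cY$ is built from $B_b$ rather than $C_b$, and I expect it to be the only point that requires any care; the mild technical subtleties are joint measurability in $(t,x,a)$ and a crude boundedness estimate for $\sL^{\bm\mu}\psi$ in terms of $\|b\|_\infty$, $\|\sigma\sigma^\top\|_\infty$, $\|\nabla_x\psi\|_\infty$, and $\|\operatorname{Hess}_x\psi\|_\infty$.

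Granting $\Phi(\psi)\in\cY$ and recalling that $(\nu,\xi)\in\cX=\cM(\sR^d)\times\cM(\sT\times\sR^d\times A)$ is a pair of finite signed measures, the bracket $\langle(\nu,\xi),\Phi(\psi)\rangle_{\cX\times\cY}$ is a finite real number and $\psi\mapsto\Phi(\psi)$ is linear, so $\psi\mapsto\cL(\nu,\xi)(\psi)$ is a well-defined linear functional on $\cW$, i.e.\ $\cL(\nu,\xi)\in\cW^\#=\cZ$. Linearity of the map $(\nu,\xi)\mapsto\cL(\nu,\xi)$ then follows immediately from the bilinearity of $\langle\cdot,\cdot\rangle_{\cX\times\cY}$ in its first argument.

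For continuity I would invoke the universal property of the $\sigma(\cZ,\cW)$-topology: $\cL:\cX\to\cZ$ is $\sigma(\cX,\cY)$-$\sigma(\cZ,\cW)$-continuous if and only if for every $\psi\in\cW$ the functional $(\nu,\xi)\mapsto\langle\cL(\nu,\xi),\psi\rangle_{\cZ\times\cW}=\cL(\nu,\xi)(\psi)$ is $\sigma(\cX,\cY)$-continuous on $\cX$. By the identity from the first paragraph, this functional equals $(\nu,\xi)\mapsto\langle(\nu,\xi),\Phi(\psi)\rangle_{\cX\times\cY}$ with $\Phi(\psi)\in\cY$, and pairings against elements of $\cY$ are $\sigma(\cX,\cY)$-continuous by the very definition of that topology; equivalently, the algebraic transpose of $\cL$ is the linear map $\Phi:\cW\to\cY$. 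This yields the asserted continuity and completes the argument, the only genuine content being the membership $\Phi(\psi)\in\cY$ established in the second paragraph, with everything else reducing to bookkeeping with the two dual pairings.
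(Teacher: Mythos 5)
Your proposal is correct and follows essentially the same route as the paper: both reduce everything to the observation that $\big(\psi(T,\cdot),\,-(\partial_t\psi+\sL^{\bm\mu}\psi)\big)\in\cY$ under (H.1), so that $\cL(\nu,\xi)(\psi)$ is a pairing against a fixed element of $\cY$, from which well-definedness, linearity, and weak continuity follow from the definitions of the dual pairings. The only cosmetic difference is that you invoke the characterization of continuity into the initial topology $\sigma(\cZ,\cW)$ directly (identifying the transpose of $\cL$ with $\psi\mapsto\Phi(\psi)$), whereas the paper verifies the same fact by an explicit net argument; the substance is identical.
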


We next derive the dual problem of \eqref{eq:primal_diffusion2} by applying the abstract duality theory in  Section 3.3 of  \cite{anderson1987linear}. 
By     Lemma \ref{lemma:L_diffusion} and  
\cite[Proposition 4, p.37]{anderson1987linear}, 
the adjoint   $\mathcal L^*: \cW\mapto \cY $
 of $\cL$ is a well-defined linear map 
satisfying  
$$
\langle \mathcal L(\nu,\xi), \psi \rangle_{\cZ\times \cW} =\langle (\nu,\xi), \mathcal L^*(\psi) \rangle_{\cX\times \cY},
\quad \forall \psi\in \cW,  (\nu,\xi)\in \cX. 
$$
The dual formulation for 
  \eqref{eq:primal_diffusion2} is then defined as 
\begin{equation}
\label{eq:dual_diffusion_abstract}
\tag{Dual*}
\sup_{\psi\in \cD_{\cP^*}(\bm \mu)} \langle h, \psi\rangle_{\cZ\times \cW },
    \end{equation}
    with the domain 
\begin{equation}
\label{eq:D_dual_diffusion_abstract}
  \cD_{\cP^*}(\bm \mu) \coloneqq  \{
 \psi\in \cW\mid  
 (g(\cdot,\mu_T),  f(\cdot,\cdot,\cdot,\mu_\cdot))- \cL^*(\psi)
  \in  \cX_+^\# \},
\end{equation}
    where $\cX_+^\#$ is the dual cone of $\cX_+$ in $\cY$ defined  by 
    $$
    \cX_+^\# \coloneqq \{(u,\phi)\in \cY\mid \langle (\nu,\xi), (u,\phi)   \rangle_{\cX\times \cY}\ge 0, \quad \forall (\nu,\xi)\in \cX_+\}.
    $$
Hereafter, we call $\psi\in \cW$ a dual variable and $\psi\in \cD_{P^*}(\bm \mu)$ a feasible/admissible dual variable.

The following proposition presents an explicit formulation of the dual problem  \eqref{eq:dual_diffusion_abstract}.

 \begin{Proposition}
     \label{prop:D_dual_equivalence}
 Suppose (H.\ref{assm:diffusion_state})  holds, and  let $\bm \mu \in   \cP(\sR^d|\sT)$. 
 The dual problem 
 \eqref{eq:dual_diffusion_abstract}
can be written in the following   form:  
\begin{equation}
\label{eq:dual_diffusion}
\tag{Dual}
\sup_{\psi\in \cD_{\cP^*}(\bm \mu)}J_{{P}^*}^{ \bm\mu} (\psi),
\quad 
\textnormal{with $J_{{P}^*}^{ \bm\mu} (\psi) \coloneqq \int_{\sR^d}\psi(0,x)\rho(\d x)$},
\end{equation}
and $\cD_{P^*}(\bm \mu ) \subset \cW$   defined by
\begin{equation} 
\label{eq:D_dual_diffusion}
\cD_{P^*}(\bm \mu ) \coloneqq 
\left\{ 
\psi  \in \cW
\,\middle\vert\, 
\begin{aligned}
& g(x,\mu_T)\ge    \psi(T,x), \, \partial_t\psi(t,x) + \big(\sL^{{\bm \mu}}  \psi \big)(t,x,a)+
f(t,x,a,\mu_t)\ge 0 
\\
&
\textnormal{for all  $(t, x, a)\in \sT\times \sR^d\times A $.}
 \end{aligned}
\right\}.
\end{equation}

 \end{Proposition}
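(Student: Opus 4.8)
The plan is to translate the abstract dual program \eqref{eq:dual_diffusion_abstract}--\eqref{eq:D_dual_diffusion_abstract} into the explicit form \eqref{eq:dual_diffusion}--\eqref{eq:D_dual_diffusion}. This reduces to three routine computations: (i) identifying the dual objective, (ii) computing the adjoint $\cL^*$ explicitly, and (iii) describing the dual cone $\cX_+^\#$. For (i), I would simply note that, by the definition of the pairing $\langle\cdot,\cdot\rangle_{\cZ\times\cW}$ and of $h\in\cZ$ in \eqref{eq:L_diffusion}, $\langle h,\psi\rangle_{\cZ\times\cW}=h(\psi)=\int_{\sR^d}\psi(0,x)\rho(\d x)=J_{P^*}^{\bm\mu}(\psi)$ for every $\psi\in\cW$, which is precisely the objective appearing in \eqref{eq:dual_diffusion}.

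For (ii), recall from Lemma \ref{lemma:L_diffusion} and \cite[Proposition 4, p.37]{anderson1987linear} that $\cL^*:\cW\mapto\cY$ is the unique linear map satisfying $\langle\cL(\nu,\xi),\psi\rangle_{\cZ\times\cW}=\langle(\nu,\xi),\cL^*(\psi)\rangle_{\cX\times\cY}$ for all $(\nu,\xi)\in\cX$ and $\psi\in\cW$. The key observation is that the pair $\big(\psi(T,\cdot),\,-(\partial_t\psi+\sL^{\bm\mu}\psi)\big)$ belongs to $\cY=B_b(\sR^d)\times B_b(\sT\times\sR^d\times A)$: the first component is bounded and continuous because $\psi\in C^{1,2}_b(\sT\times\sR^d)$, while the second is bounded and measurable because, by \eqref{eq:generator_diffusion}, it is assembled from the bounded continuous functions $\partial_t\psi$, $\nabla_x\psi$, $\textrm{Hess}_x\psi$ and the bounded measurable coefficients $b,\sigma$ of (H.\ref{assm:diffusion_state}). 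Since $\langle\cL(\nu,\xi),\psi\rangle_{\cZ\times\cW}=\cL(\nu,\xi)(\psi)$, comparing with the definition of $\cL$ in \eqref{eq:L_diffusion} shows that this pair satisfies the adjoint identity, so by uniqueness $\cL^*(\psi)=\big(\psi(T,\cdot),\,-(\partial_t\psi+\sL^{\bm\mu}\psi)\big)$.

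For (iii), since $\cX_+=\cM_+(\sR^d)\times\cM_+(\sT\times\sR^d\times A)$ contains all Dirac masses, I would show that $(u,\phi)\in\cY$ lies in $\cX_+^\#$ if and only if $u\ge0$ everywhere on $\sR^d$ and $\phi\ge0$ everywhere on $\sT\times\sR^d\times A$: the ``if'' direction follows at once from $\langle(\nu,\xi),(u,\phi)\rangle_{\cX\times\cY}=\int u\,\d\nu+\int\phi\,\d\xi\ge0$, and the ``only if'' direction from testing against $\delta_x$ and $\delta_{(t,x,a)}$. Substituting the formula for $\cL^*$, the membership $(g(\cdot,\mu_T),f(\cdot,\cdot,\cdot,\mu_\cdot))-\cL^*(\psi)\in\cX_+^\#$ in \eqref{eq:D_dual_diffusion_abstract} becomes $g(x,\mu_T)-\psi(T,x)\ge0$ for all $x$ and $f(t,x,a,\mu_t)+\partial_t\psi(t,x)+(\sL^{\bm\mu}\psi)(t,x,a)\ge0$ for all $(t,x,a)$, i.e., the admissible set $\cD_{\cP^*}(\bm\mu)$ coincides with $\cD_{P^*}(\bm\mu)$ of \eqref{eq:D_dual_diffusion}. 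Combining (i)--(iii) then identifies \eqref{eq:dual_diffusion_abstract} with \eqref{eq:dual_diffusion}.

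I do not anticipate any genuine obstacle in this argument; it is essentially bookkeeping around the abstract duality already set up before the statement. The only two points deserving care are confirming that $\cL^*(\psi)$ indeed lands in $\cY$ (which is where boundedness of the coefficients in (H.\ref{assm:diffusion_state}) and the $C^{1,2}_b$ regularity of the test functions $\psi$ enter), and observing that the dual-cone inequalities must hold pointwise \emph{everywhere} rather than merely almost everywhere --- which is legitimate precisely because every Dirac measure is an admissible test element of $\cX_+$.
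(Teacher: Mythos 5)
Your proposal is correct and follows essentially the same route as the paper's proof: identify $\cL^*(\psi)$ with the pair $\big(\psi(T,\cdot),\,-(\partial_t\psi+\sL^{\bm \mu}\psi)\big)$ via the adjoint identity, and convert the dual-cone membership into the pointwise inequalities of \eqref{eq:D_dual_diffusion} by testing against Dirac measures in $\cX_+$ (the paper phrases this as a contradiction argument at a point where the integrand would be negative, which is the same Dirac-mass device). Your extra bookkeeping --- checking that $\cL^*(\psi)$ lands in $\cY$, invoking uniqueness of the adjoint (which holds since Diracs separate points of $\cY$), and noting the trivial identification of the objective with $\int_{\sR^d}\psi(0,x)\rho(\d x)$ --- is consistent with, and only slightly more explicit than, the paper's argument.
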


\begin{Remark}
\label{rmk:dual_problem}
Observe that the dual problem 
\eqref{eq:dual_diffusion}
can be interpreted as  the supremum over all smooth subsolutions of the following HJB equation:
for all $(t,x)\in \sT\times \sR^d$,
\begin{equation*}
\partial_t V(t,x) +\inf_{a\in A} \left(
 (\sL^{\bm \mu} V)(t, x, a)
 +f(t,x,a,\mu_t) \right)=0,
 \quad   V(T,x)=g(x,\mu_T).
\end{equation*}
Similar observations have been made for control problems with Lipschitz coefficients in \cite{lions1983hamilton, fleming1989convex, buckdahn2011stochastic, serrano2015lp}, where the dual problem serves as an intermediate step in connecting the  value function to viscosity solutions of the HJB equation.  

Here we extend the dual formulation 
to control problems with   measurable coefficients.
As will be shown in Theorems 
\ref{thm:primal_dual_NE_diffusion_sufficient}
and \ref{thm:primal_dual_NE_diffusion_necessary},
a  solution to this dual problem plays an essential role in characterizing  NEs of the MFG.

\end{Remark}

\paragraph{Verification theorem of NEs.}

To characterize the NE,
we first observe that  the value of the dual problem provides a lower bound for the value of the primal problem,
as shown in 
the following proposition.

\begin{Proposition}[Weak duality]\label{prop:weak_duality_diffusion}
    Suppose (H.\ref{assm:diffusion_state}) 
holds
and $\bm\mu \in \cP(\sR^d |\sT)$.
  Then
$$
\inf_{(\nu,\xi)\in \cD_{ P}(\bm \mu)}J_{{P}}^{\bm \mu} (\nu,\xi ) \ge \sup_{\psi\in \mathcal D_{P^*}(\bm \mu)} J_{{P}^*}^{\bm \mu} (\psi ).
$$

\end{Proposition}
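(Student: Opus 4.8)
The plan is to prove the weak duality inequality by the standard Lagrangian/complementary-slackness argument: take an arbitrary feasible pair $(\nu,\xi)\in\cD_P(\bm\mu)$ and an arbitrary feasible dual variable $\psi\in\cD_{P^*}(\bm\mu)$, and show $J_P^{\bm\mu}(\nu,\xi)\ge J_{P^*}^{\bm\mu}(\psi)$; taking the infimum over $(\nu,\xi)$ and the supremum over $\psi$ then yields the claim. Since either side may be vacuous, I would first dispose of the trivial cases: if $\cD_P(\bm\mu)=\emptyset$ the left-hand side is $+\infty$ and there is nothing to prove, and if $\cD_{P^*}(\bm\mu)=\emptyset$ the right-hand side is $-\infty$; otherwise fix feasible points on both sides.

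The core computation is as follows. Because $(\nu,\xi)$ satisfies the linear constraint \eqref{eq:condition_primal_cx_diffusion} and $\psi\in\cW=C^{1,2}_b(\sT\times\sR^d)$ is an admissible test function, we have
\begin{equation*}
\int_{\sR^d}\psi(T,x)\,\nu(\d x)-\int_{\sR^d}\psi(0,x)\,\rho(\d x)=\int_{\sT\times\sR^d\times A}\Big(\big(\sL^{\bm\mu}\psi\big)(t,x,a)+(\partial_t\psi)(t,x)\Big)\xi(\d t,\d x,\d a).
\end{equation*}
Rearranging this gives an expression for $J_{P^*}^{\bm\mu}(\psi)=\int_{\sR^d}\psi(0,x)\,\rho(\d x)$ in terms of $\nu$ and $\xi$. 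Now I use the two inequalities defining $\cD_{P^*}(\bm\mu)$ in \eqref{eq:D_dual_diffusion}: integrating $g(x,\mu_T)-\psi(T,x)\ge 0$ against the nonnegative measure $\nu$, and integrating $\partial_t\psi(t,x)+(\sL^{\bm\mu}\psi)(t,x,a)+f(t,x,a,\mu_t)\ge 0$ against the nonnegative measure $\xi$. Adding these to the rearranged identity, the $\psi(T,\cdot)$ terms and the $(\partial_t\psi+\sL^{\bm\mu}\psi)$ terms cancel, leaving exactly
\begin{equation*}
J_P^{\bm\mu}(\nu,\xi)=\int_{\sT\times\sR^d\times A}f(t,x,a,\mu_t)\,\xi(\d t,\d x,\d a)+\int_{\sR^d}g(x,\mu_T)\,\nu(\d x)\ \ge\ \int_{\sR^d}\psi(0,x)\,\rho(\d x)=J_{P^*}^{\bm\mu}(\psi),
\end{equation*}
which is the desired pointwise inequality.

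The only genuine care needed is measure-theoretic bookkeeping: one must check that all the integrals above are well-defined and finite so that the rearrangement and addition of inequalities are legitimate. Under (H.\ref{assm:diffusion_state}) the functions $f$, $g$, $b$, $\sigma$ are bounded, and $\psi$ together with $\partial_t\psi,\nabla_x\psi,\mathrm{Hess}_x\psi$ are bounded, so the integrands $(\sL^{\bm\mu}\psi+\partial_t\psi)$, $g(\cdot,\mu_T)-\psi(T,\cdot)$, and $\partial_t\psi+\sL^{\bm\mu}\psi+f$ are all bounded measurable; hence finiteness of each integral follows provided $\nu$ and $\xi^\sT$ have finite mass, which is guaranteed by Proposition \ref{prop:cont_margin}\eqref{item:timemargin_Les_cont} (so $\xi(\sT\times\sR^d\times A)=T<\infty$) together with taking $\psi\equiv 1$ in \eqref{eq:condition_primal_cx_diffusion} to see $\nu(\sR^d)=\rho(\sR^d)=1$. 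I expect no real obstacle here; the weak duality is the "easy half" and this abstract-duality inequality is essentially automatic once the spaces and cones in \eqref{eq:D_dual_diffusion_abstract} are set up correctly — indeed it also follows directly from \cite[Theorem 1, Section 3.3]{anderson1987linear}, but the self-contained test-function argument above is cleaner and avoids invoking the abstract machinery.
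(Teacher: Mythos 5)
Your argument is correct, but it takes a different route from the paper. The paper's proof is essentially a one-liner: after disposing of the same empty-feasible-set cases, it invokes the abstract weak duality theorem for infinite-dimensional linear programs \cite[Theorem 3.1, p.~39]{anderson1987linear}, applied to the abstract formulation \eqref{eq:primal_diffusion2}--\eqref{eq:dual_diffusion_abstract}; implicitly this leans on Proposition \ref{prop:D_dual_equivalence}, which identifies the abstract dual cone constraint with the explicit inequalities in \eqref{eq:D_dual_diffusion}. You instead give the self-contained test-function computation: plug the dual variable $\psi\in\cW$ into the primal constraint \eqref{eq:condition_primal_cx_diffusion}, then integrate the two dual inequalities against the nonnegative measures $\nu$ and $\xi$ and add. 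Your route buys transparency and independence from the Anderson--Nash machinery (and from Proposition \ref{prop:D_dual_equivalence}), at the cost of a few lines of bookkeeping; the paper's route is shorter and emphasizes that weak duality is automatic once the dual pairing and cones are set up. One minor simplification available to you: the finiteness of all integrals needs no appeal to Proposition \ref{prop:cont_margin}\eqref{item:timemargin_Les_cont} or to the choice $\psi\equiv 1$, since $\nu\in\cM_+(\sR^d)$ and $\xi\in\cM_+(\sT\times\sR^d\times A)$ are finite measures by the definition of $\cX_+$, and all integrands are bounded under (H.\ref{assm:diffusion_state}).
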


    Proposition \ref{prop:weak_duality_diffusion} 
    implies the following verification theorem for an optimality control of the representative player's problem.
     
\begin{Proposition} \label{prop:representative_verification}

    Suppose (H.\ref{assm:diffusion_state})  holds and 
  $\bm \mu \in \cP(A|\sT\times \sR^d)  $.
 If 
 $(\bar \nu,\bar \xi,  \bar \psi)
 \in \cM_+(\sR^d) \times \cM_+(\sT\times \sR^d \times A)\times 
C^{1,2}_b(\sT\times \sR^d)
 $ satisfies the following linear equations: 
    \begin{subequations}
  \label{eq:primal_dual_diffusion}
\begin{align}[left = \empheqlbrace\,]
\begin{split}\label{eq:primal_dual_value_diffusion}
& \int_{\sR^d} g(x,\mu_T )\bar \nu (\d x)+\int_{\sT\times\sR^d\times A} f(t,x,a,\mu_t)\bar \xi(\d t,\d x,\d a) =\int_{\sR^d}\bar  \psi (0,x)\rho(\d x),
\end{split}\\
\begin{split}  \label{eq:primal_constraint_diffusion}
 &\int_{\sR^d}  \psi(T,x) \bar\nu(\d x)  
 - \int_{\sR^d}  \psi(0,x)\rho(\d x)\\
& \qquad =  \int_{\sT \times \sR^d \times A}\Big(\big(\sL^{\bm \mu}  \psi\big)(t,x,a)+(\partial_t  \psi)(t,x)\Big)\bar \xi(\d t, \d x,  \d a),
    \quad \forall \psi \in \cW,
\end{split}\\
\begin{split}\label{eq:dual_constraint_1_diffusion}
    g(x,\mu_T)\ge  \bar  \psi(T,x),  \quad  \forall  x\in\sR^d ,  
   \end{split}
   \\
\begin{split}\label{eq:dual_constraint_2_diffusion} 
 \partial_t \bar \psi(t,x) + \big(\sL^{{\bm \mu}} \bar  \psi \big)(t,x,a)+
f(t,x,a,\mu_t)\ge 0,
     \quad  \forall   (t, x,a) \in \sT\times  \sR^d\times A,  
   \end{split}
\end{align}
\end{subequations}
then 
$(\bar \nu,\bar \xi)\in  \argmin_{(\nu,\xi)\in  {\cD_P}(\bm \mu) }   {J}^{\bm \mu}_P(\nu,\xi) $
and $
  \bar \psi \in  \argmax_{\psi\in  {\cD_{P^*}}(\bm \mu) }   {J}^{\bm \mu}_{P^*}( \psi)$.

\end{Proposition}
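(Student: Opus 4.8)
The plan is to run the standard linear-programming duality argument: the system \eqref{eq:primal_dual_diffusion} encodes exactly primal feasibility of $(\bar\nu,\bar\xi)$, dual feasibility of $\bar\psi$, and equality of the two objective values, so optimality of both will follow immediately from weak duality (Proposition \ref{prop:weak_duality_diffusion}).

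First I would check feasibility of the dual variable. By hypothesis $\bar\psi\in C^{1,2}_b(\sT\times\sR^d)=\cW$, and conditions \eqref{eq:dual_constraint_1_diffusion}--\eqref{eq:dual_constraint_2_diffusion} are precisely the two inequalities appearing in the definition \eqref{eq:D_dual_diffusion} of $\cD_{P^*}(\bm\mu)$ from Proposition \ref{prop:D_dual_equivalence}. Hence $\bar\psi\in\cD_{P^*}(\bm\mu)$, so $\bar\psi$ is admissible for \eqref{eq:dual_diffusion} and in particular $J_{P^*}^{\bm\mu}(\bar\psi)\le\sup_{\psi\in\cD_{P^*}(\bm\mu)}J_{P^*}^{\bm\mu}(\psi)$. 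Next I would check feasibility of the primal pair: by hypothesis $(\bar\nu,\bar\xi)\in\cM_+(\sR^d)\times\cM_+(\sT\times\sR^d\times A)=\cX_+$, and condition \eqref{eq:primal_constraint_diffusion} is literally the linear constraint \eqref{eq:condition_primal_cx_diffusion} defining $\cD_P(\bm\mu)$ in \eqref{feasible_set_diffusion}, so $(\bar\nu,\bar\xi)\in\cD_P(\bm\mu)$ and $J_P^{\bm\mu}(\bar\nu,\bar\xi)\ge\inf_{(\nu,\xi)\in\cD_P(\bm\mu)}J_P^{\bm\mu}(\nu,\xi)$.

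Finally, the ``no gap'' step: recalling the definitions \eqref{eq:object_primal_cX_diffusion} of $J_P^{\bm\mu}$ and \eqref{eq:dual_diffusion} of $J_{P^*}^{\bm\mu}$, equation \eqref{eq:primal_dual_value_diffusion} says exactly $J_P^{\bm\mu}(\bar\nu,\bar\xi)=J_{P^*}^{\bm\mu}(\bar\psi)$. Combining this with weak duality (Proposition \ref{prop:weak_duality_diffusion}) yields the chain
$$J_P^{\bm\mu}(\bar\nu,\bar\xi)=J_{P^*}^{\bm\mu}(\bar\psi)\le\sup_{\psi\in\cD_{P^*}(\bm\mu)}J_{P^*}^{\bm\mu}(\psi)\le\inf_{(\nu,\xi)\in\cD_P(\bm\mu)}J_P^{\bm\mu}(\nu,\xi)\le J_P^{\bm\mu}(\bar\nu,\bar\xi),$$
so every inequality is an equality; in particular $(\bar\nu,\bar\xi)$ attains the infimum in \eqref{Linear_programming_diffusion} and $\bar\psi$ attains the supremum in \eqref{eq:dual_diffusion}, which is the claim.

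There is no genuine obstacle: the proof is a bookkeeping exercise matching the hypotheses against the definitions of the feasible sets and the objective functionals, once weak duality and the explicit form of the dual are in hand. The only point warranting a line of care is to confirm that the quantifier ``$\forall\psi\in\cW$'' in \eqref{eq:primal_constraint_diffusion} ranges over the same test-function space $\cW=C^{1,2}_b(\sT\times\sR^d)$ as in \eqref{eq:condition_primal_cx_diffusion}, so that primal feasibility is reproduced verbatim with neither strengthening nor weakening of the constraint; likewise that the inequalities in \eqref{eq:dual_constraint_1_diffusion}--\eqref{eq:dual_constraint_2_diffusion} are required for \emph{all} $(t,x,a)$, matching \eqref{eq:D_dual_diffusion}.
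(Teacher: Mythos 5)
Your proposal is correct and follows exactly the paper's own argument: \eqref{eq:primal_constraint_diffusion} gives primal feasibility, \eqref{eq:dual_constraint_1_diffusion}--\eqref{eq:dual_constraint_2_diffusion} give dual feasibility via Proposition \ref{prop:D_dual_equivalence}, \eqref{eq:primal_dual_value_diffusion} equates the two objective values, and weak duality (Proposition \ref{prop:weak_duality_diffusion}) then forces both to be optimal. Nothing to add.
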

Proposition \ref{prop:representative_verification} follows directly from Proposition
\ref{prop:weak_duality_diffusion}: 
Condition \eqref{eq:primal_constraint_diffusion} ensures  that $(\bar \nu,\bar \xi)$
is a feasible solution
to the primal problem, 
Conditions  
\eqref{eq:dual_constraint_1_diffusion}   and 
\eqref{eq:dual_constraint_2_diffusion}   guarantee  
 that $\bar \psi$
 is a feasible solution
 to the dual problem, 
 and Condition  \eqref{eq:primal_dual_value_diffusion} 
 implies $(\bar \nu,\bar \xi)$ and   $\bar \psi$ yield the same value, 
which ensures   the optimality of $(\bar \nu,\bar \xi, \bar \psi)$
due to the weak duality property.

To provide a verification theorem for the NEs of the MFG, 
observe that the consistency 
condition in Definition \ref{def:NE}
can be enforced by 
replacing $\bm\mu$ in Proposition \ref{prop:representative_verification}
with the state marginal law of $\bar\xi$. 
Consequently, we can obtain a verification theorem for an NE  of the MFG 
through   solutions of a primal-dual system.

\begin{Theorem}[Primal-dual formulation of MFG]\label{thm:primal_dual_NE_diffusion_sufficient}
    Suppose (H.\ref{assm:diffusion_state}) 
holds.
Let $\bm \mu^*\in \cP(\sR^d|\sT)$ be narrowly continuous,  $ \xi^*\in  \cM_{+}(\sT\times \sR^d \times A)$, and $\psi^*\in C^{1,2}_b(\sT\times \sR^d)$.
  If $(\bm \mu^*,  \xi^*, \psi^*)$ satisfies the following primal-dual system:
    \begin{subequations}
  \label{eq:NE_primal_dual_diffusion}
\begin{align}[left = \empheqlbrace\,]
\begin{split}
\label{eq:NE_primal_dual_value_diffusion}
& \int_{\sR^d} g(x,\mu^*_T )\mu^*_{T}(\d x)+\int_{\sT\times\sR^d\times A} f(t,x,a,\mu^*_t)\xi^*(\d t,\d x,\d a) =\int_{\sR^d} \psi^*(0,x)\rho(\d x),
\end{split}\\
\begin{split}  \label{eq:NE_primal_constraint_diffusion}
 &\int_{\sR^d}  \psi(T,x)\mu^*_T(\d x)  
 - \int_{\sR^d} \psi(0,x)\rho(\d x)\\
& \qquad =  \int_{\sT \times \sR^d \times A}\Big(\big(\sL^{\bm \mu^*}  \psi\big)(t,x,a)+(\partial_t\psi)(t,x)\Big)\xi^*(\d t, \d x,  \d a),
    \quad \forall \psi \in \cW,
\end{split}\\
\begin{split}\label{eq:NE_dual_constraint_1_diffusion}
    g(x,\mu^*_T)\ge    \psi^*(T,x),  \quad  \forall  x\in\sR^d ,  
   \end{split}
   \\
\begin{split}\label{eq:NE_dual_constraint_2_diffusion} 
 \partial_t\psi^*(t,x) + \big(\sL^{{\bm \mu^*}}  \psi^* \big)(t,x,a)+
f(t,x,a,\mu^*_t)\ge 0,
     \quad  \forall   (t, x,a) \in \sT\times  \sR^d\times A,  
   \end{split}\\
   \begin{split}\label{eq:NE_consistency_constraint_diffusion}
       \int_{\sT\times \sR^d} \psi(t,x) \mu^*_t (\d x) \d t
 =
 \int_{\sT\times \sR^d\times A} \psi(t,x) \xi^*(\d t, \d x, \d a),
 \quad \forall \psi\in \cW,
   \end{split}
\end{align}
\end{subequations}
   then
   $\psi^*\in    \argmax_{\psi\in  {\cD_{P^*}}(\bm \mu^*) }   {J}^{\bm \mu^*}_{P^*}( \psi)$, and  there exists 
a   process $\bm X^*$  
 such that 
 $(\bm \mu^*, \bm X^*, \gamma^*)\in \cP(\sR^d|\sT)\times  \cA_{\rm cl}(\bm \mu^*)$ is an NE,
 with $\gamma^*\in \cP(A|\sT\times \sR^d)$ being the disintegration kernel of $\xi^*$ in Lemma \ref{lemma:disintegration}.

\end{Theorem}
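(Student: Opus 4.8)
The plan is to show that the five equations in the primal-dual system \eqref{eq:NE_primal_dual_diffusion} force $(\bm\mu^*, \xi^*, \psi^*)$ to satisfy both defining properties of an NE in Definition \ref{def:NE}: the optimality of the induced state-policy pair and the consistency of the mean field flow. First I would observe that the consistency equation \eqref{eq:NE_consistency_constraint_diffusion} identifies the state-time marginal $\xi^{*,\sT\times\sR^d}$ of $\xi^*$ with the measure $\mu^*_t(\d x)\,\d t$; combined with \eqref{eq:NE_primal_constraint_diffusion}, this gives that $(\mu^*_T, \xi^*)\in\cD_P(\bm\mu^*)$, i.e., the pair is feasible for the linear program \eqref{Linear_programming_diffusion} with mean field flow $\bm\mu^*$. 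Next, the inequalities \eqref{eq:NE_dual_constraint_1_diffusion} and \eqref{eq:NE_dual_constraint_2_diffusion} are exactly the constraints defining $\cD_{P^*}(\bm\mu^*)$ in \eqref{eq:D_dual_diffusion}, so $\psi^*\in\cD_{P^*}(\bm\mu^*)$ is feasible for the dual problem \eqref{eq:dual_diffusion}. Finally, the value-matching equation \eqref{eq:NE_primal_dual_value_diffusion} states that $J_P^{\bm\mu^*}(\mu^*_T,\xi^*) = J_{P^*}^{\bm\mu^*}(\psi^*)$ (using again that $\xi^{*,\sT\times\sR^d}$ has the right marginal so that the objective \eqref{eq:object_primal_cX_diffusion} is evaluated correctly). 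By the weak duality inequality of Proposition \ref{prop:weak_duality_diffusion}, this common value is simultaneously the minimum of the primal and the maximum of the dual; hence $(\mu^*_T,\xi^*)\in\argmin_{(\nu,\xi)\in\cD_P(\bm\mu^*)}J_P^{\bm\mu^*}(\nu,\xi)$ and $\psi^*\in\argmax_{\psi\in\cD_{P^*}(\bm\mu^*)}J_{P^*}^{\bm\mu^*}(\psi)$. This mirrors the argument behind Proposition \ref{prop:representative_verification}, now applied with the self-consistent choice $\bm\mu=\bm\mu^*$.

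The second stage is to convert the LP-optimality of $(\mu^*_T,\xi^*)$ into an NE in the sense of the original closed-loop problem \eqref{controlled_cl_diffusion}. Here I would invoke Proposition \ref{prop:measure_diffusion}: since $(\mu^*_T,\xi^*)\in\cD_P(\bm\mu^*)$, there exists a probability space $(\Omega,\cF,\sP)$ carrying a process $\bm X^*$ with $(\bm X^*,\gamma^*)\in\cA_{\rm cl}(\bm\mu^*)$ and $J_{\rm cl}^{\bm\mu^*}(\bm X^*,\gamma^*) = J_P^{\bm\mu^*}(\mu^*_T,\xi^*)$, where $\gamma^*$ is the disintegration kernel of $\xi^*$ from Lemma \ref{lemma:disintegration}. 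By Theorem \ref{thm:primal_diffusion}, $\inf_{(\bm X,\gamma)\in\cA_{\rm cl}(\bm\mu^*)}J_{\rm cl}^{\bm\mu^*}(\bm X,\gamma) = \inf_{(\nu,\xi)\in\cD_P(\bm\mu^*)}J_P^{\bm\mu^*}(\nu,\xi)$, and we have just shown $(\mu^*_T,\xi^*)$ attains the latter; therefore $(\bm X^*,\gamma^*)$ attains the former, which is precisely the optimality condition \eqref{item:optimality} of Definition \ref{def:NE}.

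It then remains to verify the consistency condition \eqref{item:consistency}, namely $\mu^*_t = \mathscr{L}^\sP(X^*_t)$ for all $t\in\sT$. By construction in Proposition \ref{prop:measure_diffusion} (via Proposition \ref{prop:cont_margin}), the law of $X^*_t$ coincides with the narrowly continuous weak solution $m^X_t$ of the Fokker–Planck equation \eqref{eq:FP_mu_gamma}, and $m^X$ is the disintegration of $\xi^{*,\sT\times\sR^d}$ with respect to $\xi^{*,\sT}$ (for Lebesgue-a.e.\ $t$), with $m^X_0=\rho$, $m^X_T=\mu^*_T$. On the other hand, the consistency equation \eqref{eq:NE_consistency_constraint_diffusion} says $\xi^{*,\sT\times\sR^d}(\d t,\d x) = \mu^*_t(\d x)\,\d t$, so $m^X_t = \mu^*_t$ for Lebesgue-a.e.\ $t\in\sT$. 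Since both $t\mapsto m^X_t$ and $t\mapsto\mu^*_t$ are narrowly continuous (the latter by hypothesis), the a.e.\ equality upgrades to equality for every $t\in\sT$, giving $\mathscr{L}^\sP(X^*_t)=m^X_t=\mu^*_t$ for all $t$. Combining the two conditions, $(\bm\mu^*,\bm X^*,\gamma^*)$ is an NE. I expect the main technical care to be in this last step — ensuring the narrow-continuity upgrade from a.e.\ to everywhere is clean and that the marginal identifications from Propositions \ref{prop:cont_margin} and \ref{prop:measure_diffusion} line up exactly with the measure $\xi^*$ appearing in \eqref{eq:NE_primal_dual_diffusion} — together with keeping precise track of which marginal of $\xi^*$ enters the objective functional, since a mismatch there would break the value-matching step.
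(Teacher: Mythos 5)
Your proposal is correct and follows essentially the same route as the paper: feasibility of $(\mu^*_T,\xi^*)$ and $\psi^*$ plus the value-matching condition combined with weak duality (Propositions \ref{prop:weak_duality_diffusion} and \ref{prop:representative_verification} applied with $\bm\mu=\bm\mu^*$) yield optimality of both, and then Proposition \ref{prop:measure_diffusion} together with Theorem \ref{thm:primal_diffusion} produces the optimal pair $(\bm X^*,\gamma^*)$, with \eqref{eq:NE_consistency_constraint_diffusion} and narrow continuity upgrading the a.e.\ identification $m^X_t=\mu^*_t$ to all $t$. This matches the paper's intended argument exactly, so no changes are needed.
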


 \begin{Remark}\label{remark:distinction}

Condition \eqref{eq:NE_primal_constraint_diffusion} ensures  that $   \xi^*$
is an admissible occupation measure
of the representative player's primal problem
given 
the      mean field flow $\bm \mu^*$, 
and 
Conditions  
\eqref{eq:NE_dual_constraint_1_diffusion}   and 
\eqref{eq:NE_dual_constraint_2_diffusion}   ensures  
 that $ \psi^*$ 
 is a feasible solution
of the representative player's dual problem
given  the    mean field flow $\bm \mu^*$. 
   Condition  \eqref{eq:primal_dual_value_diffusion} 
  ensures simultaneously that   $   \xi^*$ 
  and   $  \psi^*$ 
  are    optimal solutions
  for the primal and dual formulations, respectively.
Finally, Condition \eqref{eq:NE_consistency_constraint_diffusion}
ensures that the time-space marginal of   $\xi^*$
is consistent with the given mean field flow $\bm \mu^*$.

\begin{Remark}
   A special case of the  primal-dual system \eqref{eq:NE_primal_dual_diffusion} has been studied in \cite{cardaliaguet2015mean, cardaliaguet2015second} for MFGs where  the state process has  uncontrolled diffusion coefficients
   and model coefficients  satisfy additional structural conditions ensuring the uniqueness of NEs. Specifically, they consider an MFG \eqref{controlled_cl_diffusion}  where  the state space is  a   $d$-dimensional torus $\mathbf T^d$,
   $\rho$ has a density $m_\rho$,
   $b(t,x,a,\mu)=a$, $\sigma(t,x,a,\mu)=\bar{\sigma}(x)$, 
$g(x,\mu)=\bar{g}(x)$, 
and 
$f(t,x,a,\mu_t)=\bar{f}(x,m(t,x))+H^*(x,-a)$,
where 
$m(t,\cdot)$ is the density of $\mu_t$,
$H^*$ is strictly convex in $a$, and 
$\bar{f}$ is increasing with respect to the second variable $m$. In this setting, they introduce  the following system
\begin{subequations}
\label{eq:primal_dual_specialcase}
\begin{align}[left = \empheqlbrace\,]
\begin{split}  
\label{eq:optimality_special}
 & \int_{\mathbf T^d} \bar{g}(x)m( T, x)\d x +\int_0^T \int_{\mathbf T^d}\left(
 \bar{f}(x,m(t,x))+H^*(x,\nabla_pH(\cdot,\nabla_x \psi(t,x))\right)m(t,x) \d x\d t 
 \\
 &
 \qquad =\int_{\mathbf T^d} \psi(0,x)m_\rho(x)\d x,
  \end{split}
  \\
  \begin{split}
  \label{eq:FP_special}
  &     \partial_t m= 
      \operatorname{div}[m\nabla_pH(\cdot,\nabla_x \psi) ]
       +\operatorname{tr}\big[\operatorname{Hess}_x\big( (\bar{\sigma}\bar{\sigma}^\top)(\cdot)m\big) \big], 
\quad  m(0)=m_\rho,
\end{split}
 \\
\begin{split}  
\label{eq:HJB_special}
 &\partial_t \psi+ 
\operatorname{tr}\big( (\bar{\sigma}\bar{\sigma}^\top)(x)
\operatorname{Hess}_x \psi\big)  
-H(x,\nabla_x \psi)
+\bar{f}(x,m(t,x))\ge 0,  
\quad   \bar{g}(x)\ge \psi(x),
  \end{split}
\end{align}
 \end{subequations}
 where $H(x,p)\coloneqq \inf_{a\in \sR^d}(a^\top p-H^*(x,a))$.
It is   shown that there exists a unique pair of functions   $(\psi, m)$ which has  appropriate regularity and satisfies \eqref{eq:primal_dual_specialcase} 
 in the sense of distributions. This pair  $(\psi, m)$ is referred to as the weak solution of the MFG, though its relationship to the NE of the original MFG has not been established. 

Note that \eqref{eq:primal_dual_specialcase}
is a restriction of  \eqref{eq:NE_primal_dual_diffusion}
to strict controls under the structural conditions imposed in \cite{cardaliaguet2015mean, cardaliaguet2015second}.
Indeed, under the assumptions in 
\cite{cardaliaguet2015mean, cardaliaguet2015second}, 
 the representative player has a unique   optimal feedback control
$(t,x)\mapsto -\nabla_p H(x,\nabla_x \psi(t,x))$.
Hence given a solution   $(\psi, m)$
to \eqref{eq:primal_dual_specialcase},
 it holds that  
$  \xi^*(\d t,\d x,\d a)\coloneqq \delta_{-\nabla_p H(x,\nabla_x \psi(t,x))}(\d a)m(t,x)\d x\d t$
and  $ \psi^*\coloneqq \psi$
satisfy the primal-dual system \eqref{eq:NE_primal_dual_diffusion},
where 
\eqref{eq:primal_dual_value_diffusion} corresponds to \eqref{eq:optimality_special},
\eqref{eq:NE_primal_constraint_diffusion}
corresponds to 
\eqref{eq:FP_special},
\eqref{eq:NE_dual_constraint_1_diffusion}   and 
\eqref{eq:NE_dual_constraint_2_diffusion} correspond to \eqref{eq:HJB_special},
and 
\eqref{eq:NE_consistency_constraint_diffusion}
follows from the definition of $\xi^*$.

 Compared with \cite{cardaliaguet2015mean, cardaliaguet2015second},
 the primal-dual system \eqref{eq:NE_primal_dual_diffusion} in 
 Theorem \ref{thm:primal_dual_NE_diffusion_sufficient} provides a verification theorem for general MFGs,  allowing for controlled diffusion coefficients,
 general mean field dependencies, 
 and   the existence of multiple optimizers for the  associated Hamiltonian. Moreover, as  will show in Section \ref{sec: strongdual} the primal-dual system
\eqref{eq:NE_primal_dual_diffusion}
 is in fact a necessary condition and characterizes   all   NEs.
\end{Remark}

\begin{Corollary}\label{cor:complementary_conditions_diffusion}

Under Conditions \eqref{eq:NE_primal_constraint_diffusion},  
\eqref{eq:NE_dual_constraint_1_diffusion}   and 
\eqref{eq:NE_dual_constraint_2_diffusion},
Condition 
\eqref{eq:NE_primal_dual_value_diffusion} 
is equivalent to   the 
following   conditions: 
\begin{align}
\label{eq:complementary_diffusion}
\begin{split}
       \int_{\sR^d}(g(x,\mu_T^*)-\psi^*(T,x))\mu_T^*(\d x) =0, 
\\ 
  \int_{\sT \times \sR^d\times A}\left(\partial_t\psi^*(t,x)+\big(\sL^{\bm \mu^*}  \psi^*(t)\big)(t,x,a)+f(t,x,a,\mu_t^*)\right)
  \xi^*(\d t,\d x,\d a)=0.
 \end{split}
\end{align}
Consequently, 
Theorem \ref{thm:primal_dual_NE_diffusion_sufficient} holds when \eqref{eq:NE_primal_dual_value_diffusion} is replaced by \eqref{eq:complementary_diffusion}.
\end{Corollary}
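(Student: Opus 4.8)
The plan is to prove this by a direct complementary-slackness computation, exactly as in finite-dimensional linear programming. First I would introduce the auxiliary quantity
\begin{align*}
D &\coloneqq \int_{\sR^d}\big(g(x,\mu_T^*)-\psi^*(T,x)\big)\mu_T^*(\d x) \\
&\qquad + \int_{\sT\times\sR^d\times A}\Big(\partial_t\psi^*(t,x)+\big(\sL^{\bm \mu^*}\psi^*\big)(t,x,a)+f(t,x,a,\mu_t^*)\Big)\xi^*(\d t,\d x,\d a),
\end{align*}
that is, the sum of the two left-hand sides appearing in \eqref{eq:complementary_diffusion}. The key algebraic step is to show that $D$ equals the difference between the two sides of \eqref{eq:NE_primal_dual_value_diffusion}. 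This follows by applying the primal feasibility identity \eqref{eq:NE_primal_constraint_diffusion} with the test function $\psi=\psi^*$, which is admissible since $\psi^*\in C^{1,2}_b(\sT\times\sR^d)=\cW$; this gives $\int_{\sT\times\sR^d\times A}\big(\partial_t\psi^*+\sL^{\bm \mu^*}\psi^*\big)\,\d\xi^* = \int_{\sR^d}\psi^*(T,x)\mu_T^*(\d x)-\int_{\sR^d}\psi^*(0,x)\rho(\d x)$. Substituting into the expansion of $D$ cancels the two $\psi^*(T,\cdot)$ contributions and leaves $D = \int_{\sR^d}g(x,\mu_T^*)\mu_T^*(\d x)+\int_{\sT\times\sR^d\times A}f(t,x,a,\mu_t^*)\,\d\xi^* -\int_{\sR^d}\psi^*(0,x)\rho(\d x)$, which vanishes precisely when \eqref{eq:NE_primal_dual_value_diffusion} holds.

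Next I would use the sign conditions to decompose $D$ into two nonnegative pieces. By \eqref{eq:NE_dual_constraint_1_diffusion} the integrand $g(x,\mu_T^*)-\psi^*(T,x)$ is nonnegative for every $x$, hence its integral against the probability measure $\mu_T^*$ is $\ge 0$; by \eqref{eq:NE_dual_constraint_2_diffusion} the integrand $\partial_t\psi^*+\sL^{\bm \mu^*}\psi^*+f$ is nonnegative for every $(t,x,a)$, hence its integral against the nonnegative finite measure $\xi^*\in\cM_+(\sT\times\sR^d\times A)$ is $\ge 0$. All integrals involved are finite because $f$ and $g$ are bounded by (H.\ref{assm:diffusion_state}) while $\psi^*$ and its derivatives are bounded since $\psi^*\in C^{1,2}_b$, so these manipulations are justified. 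Hence $D$ is a sum of two nonnegative terms, and $D=0$ if and only if both terms vanish, which is exactly \eqref{eq:complementary_diffusion}.

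Combining the two steps, \eqref{eq:NE_primal_dual_value_diffusion} holds if and only if $D=0$, if and only if \eqref{eq:complementary_diffusion} holds; this is the claimed equivalence under \eqref{eq:NE_primal_constraint_diffusion}, \eqref{eq:NE_dual_constraint_1_diffusion} and \eqref{eq:NE_dual_constraint_2_diffusion}. The last assertion of the corollary is then immediate: Theorem \ref{thm:primal_dual_NE_diffusion_sufficient} already assumes \eqref{eq:NE_primal_constraint_diffusion}--\eqref{eq:NE_dual_constraint_2_diffusion}, so under these hypotheses the value condition \eqref{eq:NE_primal_dual_value_diffusion} may be replaced by \eqref{eq:complementary_diffusion} without affecting the conclusion. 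I do not expect a genuine obstacle here; the only points requiring attention are that $\psi^*$ is an admissible test function in \eqref{eq:NE_primal_constraint_diffusion} (true by definition of $\cW$) and that all the integrals are finite (true by the boundedness assumptions), both of which are routine.
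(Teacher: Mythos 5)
Your proposal is correct and follows essentially the same route as the paper: test \eqref{eq:NE_primal_constraint_diffusion} with $\psi=\psi^*$ to rewrite \eqref{eq:NE_primal_dual_value_diffusion} as the vanishing of the sum of the two integrals in \eqref{eq:complementary_diffusion}, then use the sign conditions \eqref{eq:NE_dual_constraint_1_diffusion} and \eqref{eq:NE_dual_constraint_2_diffusion} to conclude that this sum vanishes if and only if each term does. The only difference is that you spell out the intermediate quantity $D$ and the integrability checks explicitly, which the paper leaves implicit.
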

 Note that Condition  \eqref{eq:complementary_diffusion} 
   suggests the dual variable $\psi^*$ to satisfy 
  $\psi^*(T,\cdot)= g( \cdot,\mu_T^*)$ \emph{on   the support} of $\mu^*_T$,
  and  
   $\min_{a\in A} \left(\big(\sL^{\bm \mu^*}  \psi^*\big)(t,x,a)+(\partial_t\psi^*)(t,x)+f(t,x,a,\mu^*_t)\right)=0$
\emph{on   the support} of the mean field flow $\mu^*_t$.
 This is a key distinction between  the primal-dual approach and the existing PDE approach, as will be discussed in detail in 
 Remark \ref{rmk:hjb_fp_pd} and 
 Example \ref{ex:comparison_with_HJB-FP}.

\end{Remark}

\section{Strong duality  and characterization of all NEs}
\label{sec: strongdual}

Theorem \ref{thm:primal_dual_NE_diffusion_sufficient} allows for verifying  a given tuple $(\bm \mu^*,  \xi^*, \psi^*)$ as an NE of the MFG through a primal-dual system. In this section, we will show that this primal-dual system in fact characterizes \emph{all NEs}, provided that
the equilibrium flow ensures  
strong duality   between the primal problem \eqref{Linear_programming_diffusion} and the dual problem \eqref{eq:dual_diffusion}.

Here, strong duality means that
given the flow $\bm \mu$, 
both the representative player's primal problem \eqref{Linear_programming_diffusion} and the dual problem \eqref{eq:dual_diffusion}  attain their optimal solutions, and yield the same  optimal values. More precisely 

\begin{Definition}
\label{def:strong_duality}
    We say a given flow $\bm \mu \in \cP(\sR^d|\sT)$ ensures strong duality 
    if 
    there exists 
$(\bar{\nu},\bar{\xi})\in \cD_{  P}(\bm \mu)$ 
and $\bar{\psi}\in \cD_{P^*}(\bm \mu)$ 
such that 
$$
J_{{P}^*}^{\bm \mu} (\bar{\psi})=
\sup_{\psi\in \mathcal D_{P^*}(\bm \mu)} J_{{P}^*}^{\bm \mu} (\psi )
=\inf_{(\nu,\xi)\in \cD_{ P}(\bm \mu)}J_{{P}}^{\bm \mu} (\nu,\xi ) 
= J_{{P}}^{\bm \mu} (\bar{\nu},\bar{\xi}).
$$
\end{Definition}

The following theorem 
shows that, with   the primal-dual system 
\eqref{eq:NE_primal_dual_diffusion}, strong duality characterizes all NEs of the MFG.

\begin{Theorem}\label{thm:primal_dual_NE_diffusion_necessary}
    Suppose (H.\ref{assm:diffusion_state})
holds.
Assume  $(\bm \mu^*, \bm X^*, \gamma^*)\in \cP(\sR^d |\sT) \times  \cA_{\rm cl}(\bm \mu^*) $ 
    is an NE  
    and $\bm \mu^*$ ensures strong duality.
 Define
 $\xi^*\in \cM_+(\sT\times \sR^d\times  A)$
such that
$\xi^*(F )\coloneqq   \int_{F  } \gamma^*(\d a|t,x)\mu^*_t (\d x) \d t$
for all  $F  \in \cB(\sT\times \sR^d\times A)$.
Then 
$\argmax_{\psi\in  {\cD_{P^*}}(\bm \mu^*) }   {J}^{\bm \mu^*}_{P^*}( \psi)$ is nonempty,
and for any   
$\psi^*\in    \argmax_{\psi\in  {\cD_{P^*}}(\bm \mu^*) }   {J}^{\bm \mu^*}_{P^*}( \psi)$,
the triple $(\bm \mu^*,   \xi^*, \psi^*)$ satisfies the primal-dual system \eqref{eq:NE_primal_dual_diffusion}. 
 
\end{Theorem}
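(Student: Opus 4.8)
The plan is to verify, one at a time, the five relations making up the primal--dual system \eqref{eq:NE_primal_dual_diffusion}, assembling facts already established; the strong-duality hypothesis will only be needed at the very last step. Nonemptiness of $\argmax_{\psi\in\cD_{P^*}(\bm\mu^*)}J^{\bm\mu^*}_{P^*}(\psi)$ is immediate from Definition \ref{def:strong_duality}, so I would fix an arbitrary maximizer $\psi^*$. Since $\psi^*$ lies in the feasible domain $\cD_{P^*}(\bm\mu^*)$, the explicit description \eqref{eq:D_dual_diffusion} of that domain in Proposition \ref{prop:D_dual_equivalence} yields the two dual constraints \eqref{eq:NE_dual_constraint_1_diffusion} and \eqref{eq:NE_dual_constraint_2_diffusion} for free. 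The consistency relation \eqref{eq:NE_consistency_constraint_diffusion} will be a tautology for the chosen $\xi^*$: by construction $\xi^*(\d t,\d x,\d a)=\gamma^*(\d a|t,x)\mu^*_t(\d x)\d t$, and test functions $\psi\in\cW$ do not see the $a$-variable, so integrating out $a$ leaves both sides equal to $\int_{\sT\times\sR^d}\psi(t,x)\mu^*_t(\d x)\d t$.

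Next I would prove the primal-feasibility relation \eqref{eq:NE_primal_constraint_diffusion} by identifying $(\mu^*_T,\xi^*)$ with the occupation-measure pair attached to the equilibrium state-policy pair $(\bm X^*,\gamma^*)$ through \eqref{eq:superposition} in Proposition \ref{prop:cl_measure}. For the terminal marginal this is just the consistency condition $\mu^*_T=\mathscr{L}^\sP(X^*_T)$ of Definition \ref{def:NE}. For the occupation measure I would evaluate both descriptions on product sets $[s_1,s_2]\times B\times C$, use Fubini to pull the time integral out of the expectation, and then invoke $\mathscr{L}^\sP(X^*_t)=\mu^*_t$ to rewrite $\sE^\sP[\mathds{1}_{\{X^*_t\in B\}}\gamma^*(C|t,X^*_t)]=\int_B\gamma^*(C|t,x)\mu^*_t(\d x)$; since such sets generate $\cB(\sT\times\sR^d\times A)$, the two measures coincide. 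Proposition \ref{prop:cl_measure} then delivers $(\mu^*_T,\xi^*)\in\cD_P(\bm\mu^*)$ --- which is exactly \eqref{eq:NE_primal_constraint_diffusion} --- together with the value identity $J^{\bm\mu^*}_P(\mu^*_T,\xi^*)=J^{\bm\mu^*}_{\rm cl}(\bm X^*,\gamma^*)$, which I will need below. (Narrow continuity of $\bm\mu^*$ is automatic here, since $\bm X^*$ solves \eqref{eq:state_diffusion} and hence has continuous paths.)

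The remaining relation \eqref{eq:NE_primal_dual_value_diffusion} is where strong duality enters. The idea is to chain five equalities: the value identity just obtained; the equilibrium optimality condition in Definition \ref{def:NE}, which makes $J^{\bm\mu^*}_{\rm cl}(\bm X^*,\gamma^*)$ equal to the infimum of \eqref{controlled_cl_diffusion} (attained, as $(\bm X^*,\gamma^*)\in\cA_{\rm cl}(\bm\mu^*)$); the primal--LP equivalence Theorem \ref{thm:primal_diffusion}, turning this into the infimum of \eqref{Linear_programming_diffusion}; the strong-duality hypothesis, equating that infimum with the supremum of \eqref{eq:dual_diffusion}; and finally the maximality of $\psi^*$, equating the supremum with $J^{\bm\mu^*}_{P^*}(\psi^*)$. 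Reading off the definitions of $J^{\bm\mu^*}_P$ and $J^{\bm\mu^*}_{P^*}$ from \eqref{eq:object_primal_cX_diffusion} and \eqref{eq:dual_diffusion}, this chain is precisely \eqref{eq:NE_primal_dual_value_diffusion}; as a by-product it exhibits $(\mu^*_T,\xi^*)$ as primal-optimal and $\psi^*$ as dual-optimal.

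The only calculation in this argument requiring genuine care is the identification of the two descriptions of the occupation measure, where the equilibrium consistency condition is used essentially; everything else is bookkeeping around results already proved. There is thus no real technical obstacle \emph{in this theorem}: its content is concentrated entirely in the strong-duality hypothesis, which is exactly what closes the loop at the middle equality $\inf_{\cD_P}J^{\bm\mu^*}_P=\sup_{\cD_{P^*}}J^{\bm\mu^*}_{P^*}$ (without it the dual value could be strictly smaller and no maximizer $\psi^*$ need exist). Showing that equilibrium flows actually ensure strong duality --- via solvability and regularity of the associated HJB equation --- is the substantive work done separately in Section \ref{sec: strongdual}.
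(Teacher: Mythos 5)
Your proposal is correct and follows essentially the same route as the paper: primal feasibility and the value identity via Proposition \ref{prop:cl_measure} together with the NE consistency condition, the chain of equalities through Theorem \ref{thm:primal_diffusion} and the strong-duality hypothesis for \eqref{eq:NE_primal_dual_value_diffusion}, dual constraints from feasibility of $\psi^*$, and \eqref{eq:NE_consistency_constraint_diffusion} from the definition of $\xi^*$. The only difference is cosmetic: you spell out explicitly (via product sets and Fubini) the identification of the statement's $\xi^*$ with the occupation measure of \eqref{eq:superposition}, which the paper leaves implicit.
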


In order to guarantee the strong duality, it is critical to identify conditions that ensure the dual problem to attain its maximizer. This is also known as the solvability of the dual problem.

Recall that
for any given flow $\bm \mu$,
the dual problem involves taking the supremum over all smooth subsolutions of the following HJB equation  (see Remark \ref{rmk:dual_problem}):
\begin{equation}
\label{eq:hjb_diffusion}
\partial_t V(t,x) +\inf_{a\in A} \left(
 (\sL^{\bm \mu} V)(t, x, a)
 +f(t,x,a,\mu_t) \right)=0,
 \quad   V(T,x)=g(x,\mu_T).
\end{equation}
Since the pointwise maximum of subsolutions is known to be a viscosity solution \cite[Theorem 4.1]{crandall1992user},
a natural sufficient condition for the solvability of the dual problem is ensuring that the HJB equation \eqref{eq:hjb_diffusion} admits a smooth viscosity solution, or equivalently a classical solution.

To this end, observe  that
the regularity of the solution to  
\eqref{eq:hjb_diffusion} depends crucially on  
the regularity of 
$(b,\sigma, f)$ with respect to the measure component, 
and 
 the   regularity of  the given mean field flow  $t\mapsto   \mu_t $. In the sequel,
we   focus on   flows
  $\bm\mu =(\mu_t)_{t\in \sT} $
  of the  form: 
  \begin{equation}
  \label{eq:admissible_flow}
 \mu_t =\cL^\sP(X_t),
\quad \forall t\in \sT,
\quad \textnormal{
with $\cL^\sP(X_0)=\rho$
and 
$  X_t =X_0+\int_0^t \bar{b}_s \d s +\int_0^t \bar{\sigma}_s  \d W_s $,}     
  \end{equation}
where 
  $\bar b:\sT\times \Omega\to \sR^d$ and $\bar \sigma:\sT\times \Omega\to \sR^{d\times d}$
are  bounded  $\sF$-progressively measurable   processes
 defined on  
  a filtered probability space $(\Omega, \cF,\sF,\sP)$,
and 
$W$
is   a   $d$-dimensional $\sF$-Brownian motion on $\Omega$. 
Define the space 
$\cU_\rho $ of admissible flows by
\begin{equation}
\label{eq:U_rho}
\cU_\rho \coloneqq
\{\bm \mu \in \cP(\sR^d|\sT)
\mid \textnormal{$\bm \mu=(\mu_t)_{t\in \sT}$
satisfies \eqref{eq:admissible_flow}}
\}.
\end{equation}
 The class $\cU_\rho$
 contains all mean field state distributions induced by a given   policy
 (cf.~\eqref{eq:state_diffusion}),
 and is sufficient to characterize all NEs of the MFG.

 We now show a  flow in 
 $\cU_\rho$ ensures
  strong duality   under  the following   
conditions.

\begin{Assumption}
\label{assum:duality}
   For any  $\bm \mu\in \cU_\rho$,
    \begin{enumerate}[(1)]

\item 
\label{item:pde_regularity}
 There exists $V\in  C^{1,2}_b(\sT\times \sR^d)$ satisfying the    HJB equation 
 \eqref{eq:hjb_diffusion},  and  a measurable function $\phi :\sT\times \sR^d\to A$ such that 
$\phi (t,x) \in \arg\min_{a\in A} \left(
 (\sL^{\bm \mu} V)(t, x, a)
 +f(t,x,a,\mu_t) \right)$
 for all $(t,x)\in \sT\times \sR^d$.

    \item \label{item:feasible}
      Define $(t,x)\mapsto \gamma (\d a|t,x) \coloneqq \delta_{\phi(t,x)}(\d a)$ with $\phi$ given in 
(H.\ref{assum:duality}\ref{item:pde_regularity}). Then the SDE \eqref{eq:state_diffusion} with the policy $\gamma$ admits a weak solution.

    \end{enumerate}

\end{Assumption}

\begin{Theorem}\label{thm:strong_duality_diffusion}
Suppose (H.\ref{assm:diffusion_state}) and  (H.\ref{assum:duality})
hold.
Then any 
 $\bm \mu \in  \cU_\rho$
 ensures strong duality   between  \eqref{Linear_programming_diffusion} and  \eqref{eq:dual_diffusion}
 (cf.~Definition \ref{def:strong_duality}).
In particular, 
 Theorem \ref{thm:primal_dual_NE_diffusion_necessary}
provides  a necessary condition for all NEs 
of the MFG.  
 
\end{Theorem}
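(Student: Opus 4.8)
The plan is to produce, for each fixed $\bm\mu\in\cU_\rho$, an explicit feasible pair for \eqref{Linear_programming_diffusion} together with an explicit feasible dual variable for \eqref{eq:dual_diffusion} that attain a common objective value; strong duality then follows from the weak duality inequality of Proposition \ref{prop:weak_duality_diffusion}. Let $V\in C^{1,2}_b(\sT\times\sR^d)$ and $\phi:\sT\times\sR^d\to A$ be the classical solution of the HJB equation \eqref{eq:hjb_diffusion} and its measurable minimizer supplied by (H.\ref{assum:duality}\ref{item:pde_regularity}). First I would verify that $V$ is an admissible dual variable: since $V(T,\cdot)=g(\cdot,\mu_T)$ and, for every $(t,x,a)$,
\[
\partial_t V(t,x)+(\sL^{\bm\mu}V)(t,x,a)+f(t,x,a,\mu_t)\;\ge\;\partial_t V(t,x)+\inf_{a'\in A}\big((\sL^{\bm\mu}V)(t,x,a')+f(t,x,a',\mu_t)\big)=0,
\]
the description \eqref{eq:D_dual_diffusion} gives $V\in\cD_{P^*}(\bm\mu)$, and $J^{\bm\mu}_{P^*}(V)=\int_{\sR^d}V(0,x)\rho(\d x)$.

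Next I would use (H.\ref{assum:duality}\ref{item:feasible}): the feedback policy $\gamma(\d a|t,x)=\delta_{\phi(t,x)}(\d a)$ admits a weak solution $\bm X$ of \eqref{eq:state_diffusion}, so $(\bm X,\gamma)\in\cA_{\rm cl}(\bm\mu)$. Applying Proposition \ref{prop:cl_measure} to $(\bm X,\gamma)$ produces $(\bar\nu,\bar\xi)\in\cD_P(\bm\mu)$ with $J^{\bm\mu}_P(\bar\nu,\bar\xi)=J^{\bm\mu}_{\rm cl}(\bm X,\gamma)$. The core of the argument is a verification computation: apply It\^{o}'s formula to $t\mapsto V(t,X_t)$ (legitimate since $V\in C^{1,2}_b$), observing that, because $\gamma=\delta_{\phi(t,x)}$, the finite-variation part is $\partial_t V(t,X_t)+(\sL^{\bm\mu}V)(t,X_t,\phi(t,X_t))$, while the stochastic integral is a genuine martingale because $\nabla_xV$ and $\sigma$ are bounded. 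Combining the minimizer property of $\phi$ with the HJB equation yields the pointwise identity $\partial_tV(t,x)+(\sL^{\bm\mu}V)(t,x,\phi(t,x))=-f(t,x,\phi(t,x),\mu_t)$. Taking expectations, using $V(T,\cdot)=g(\cdot,\mu_T)$ and that $X_0$ has law $\rho$, gives
\[
J^{\bm\mu}_{\rm cl}(\bm X,\gamma)=\sE^\sP\Big[g(X_T,\mu_T)+\int_0^T f(t,X_t,\phi(t,X_t),\mu_t)\,\d t\Big]=\int_{\sR^d}V(0,x)\rho(\d x)=J^{\bm\mu}_{P^*}(V).
\]
Hence $J^{\bm\mu}_P(\bar\nu,\bar\xi)=J^{\bm\mu}_{P^*}(V)$; together with Proposition \ref{prop:weak_duality_diffusion} this forces $(\bar\nu,\bar\xi)$ and $V$ to be optimizers and establishes strong duality in the sense of Definition \ref{def:strong_duality} with $\bar\psi=V$.

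Finally, for the last assertion I would check that the mean field flow of any NE belongs to $\cU_\rho$, so that the standing hypothesis of Theorem \ref{thm:primal_dual_NE_diffusion_necessary} is automatic. If $(\bm\mu^*,\bm X^*,\gamma^*)$ is an NE, then $\bm X^*$ is a weak solution of \eqref{eq:state_diffusion} with coefficients $b^{\bm\mu^*,\gamma^*}$ and $\sigma^{\bm\mu^*,\gamma^*}$; by the defining formulas \eqref{eq:def_bsigmamu} and the boundedness in (H.\ref{assm:diffusion_state}), the processes $t\mapsto b^{\bm\mu^*,\gamma^*}(t,X^*_t)$ and $t\mapsto \sigma^{\bm\mu^*,\gamma^*}(t,X^*_t)$ are bounded and $\sF$-progressively measurable, and the consistency condition $\mu^*_t=\mathscr{L}^\sP(X^*_t)$ shows $\bm\mu^*$ has the form \eqref{eq:admissible_flow}, i.e.\ $\bm\mu^*\in\cU_\rho$. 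The first part then gives strong duality for $\bm\mu^*$, so Theorem \ref{thm:primal_dual_NE_diffusion_necessary} applies to \emph{every} NE, which is the claimed necessary condition. I expect the only genuinely delicate points to be (i) the martingale property of the It\^{o} stochastic integral, needed to kill its expectation — this is where boundedness of $\nabla_xV$ and $\sigma$ enters — and (ii) the measurability bookkeeping, namely that $t\mapsto f(t,X^*_t,\phi(t,X^*_t),\mu_t)$ and the feedback coefficients are progressively measurable so that all integrals are well defined; both follow from composing the jointly measurable data with the continuous adapted process $\bm X$, and everything else is routine once $V\in C^{1,2}_b$ is in hand.
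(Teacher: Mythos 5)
Your proposal is correct and follows essentially the same route as the paper: take the classical HJB solution $V$ from (H.\ref{assum:duality}\ref{item:pde_regularity}) as the dual optimizer, use the feedback policy $\delta_{\phi(t,x)}$ and (H.\ref{assum:duality}\ref{item:feasible}) together with Proposition \ref{prop:cl_measure} to build a feasible primal pair $(\bar\nu,\bar\xi)$, show the two objective values coincide, and conclude by weak duality. The only cosmetic difference is that you re-derive the value identity by applying It\^{o}'s formula to $V(t,X_t)$ directly, whereas the paper obtains it by plugging $\psi=V$ into the primal constraint \eqref{eq:condition_primal_cx_diffusion} (which encodes the same computation); your explicit checks of dual feasibility of $V$ and of $\bm\mu^*\in\cU_\rho$ for any NE are points the paper leaves implicit.
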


\begin{Remark}
  \label{rmk:strong_duality}

Theorem \ref{thm:strong_duality_diffusion} establishes two crucial aspects of the representative player's control problem. First, it proves that the optimal value of the primal problem \eqref{Linear_programming_diffusion} is equal to that of the dual problem \eqref{eq:dual_diffusion}. Second, it ensures that both problems admit optimal solutions.

While the first aspect has been previously studied for continuous-time control problems
(see e.g., \cite{fleming1989convex, hernandez1996linear, taksar1997infinite, 
 buckdahn2011stochastic, serrano2015lp}), 
 the solvability of the associated dual problem,
 to the best of our knowledge, has not   been established before. We fill this gap through analyzing the  regularity of solutions to  the associated  HJB equation, as indicated in Assumption (H.\ref{assum:duality}).
 As demonstrated in Theorem \ref{thm:primal_dual_NE_diffusion_necessary}, a solution to the dual problem plays a crucial role in characterizing the set of NEs in MFGs. Specifically, the dual solution acts as an adjoint variable   in characterizing the representative player's optimal control,
  further  
  enforcing the consistency condition between the mean field flow and the optimal state distribution.

\end{Remark}

We conclude this section by providing explicit conditions on the system coefficients that ensure the validity of (H.\ref{assum:duality}).
For a given MFG,
  (H.\ref{assum:duality})   can be verified 
 through  parabolic regularity results 
 under additional regularity properties of the system coefficients, depending on whether the diffusion coefficient  is controlled or not.

\begin{Proposition}
\label{prop:pde_regularity}
Suppose (H.\ref{assm:diffusion_state})  
holds.
Then Condition 
(H.\ref{assum:duality}) is satisfied if one 
further assumes  the following conditions: 
\begin{enumerate}[(1)]
   \item \label{item:diffusion_nondegenerate}
        There exists $\lambda>0$ such that for all $(t,x,a,\mu)\in \sT\times \sR^d\times A\times \cP(\sR^d)$,
        $v^\top \sigma(t,x,a,\mu) v\ge \lambda |v|^2$ for all $v\in \sR^d$.
\item  
\label{item:measurable_selection_diffusion}
$A$ is compact and    for all $(t,x,\mu)\in \sT\times \sR^d\times \cP(\sR^d)$,
$a\mapsto 
(b(t,x,a,\mu), \sigma(t,x,a,\mu))$
is   continuous  
and 
$a\mapsto f(t,x,a,\mu)$
is lower-semicontinuous.
\item For all $\mu\in \cP(\sR^d)$,
$x\mapsto g(x,\mu)$
is in $C^2_b(\sR^d)$
with H\"{o}lder continuous second-order derivatives.
    \item 
    \label{item:regularity_pde}
     One of the following statements holds:
    \begin{enumerate}[(a)] 
    \item \label{item:semilinear_pde}
The function $\sigma$  is 
independent of  the control $a$, 
and 
  there exists $C\ge 0$, $\alpha\in (0,1]$, and  $p\ge 1$
such that
$\rho\in \cP_p(\sR^d)$
and 
for all
$t,t'\in \sT$, $x,x'\in \sR^d$, $a\in A$ and $  \mu,\mu'\in \cP_p(\sR^d)$,  
\begin{align*}
&|b(t,x,a,\mu)-b(t',x',a,\mu')|
+
|\sigma(t,x,\mu)-\sigma(t',x',\mu')|
+
|f(t,x,a,\mu)-f(t',x',a,\mu')|
\\
&
\quad 
\le
C\left(|t-t'|^{\alpha/2}
+|x-x'|^{\alpha}
+W_p(\mu,\mu')^{\alpha}
\right),
\end{align*}
 where $\cP_p(\sR^d)$
 denotes the space of probability measures of order $p$, equipped with the $p$-Wasserstein metric $W_p$. 
\item     
\label{item:fully_nonlinear_pde}
The functions $b$, $\sigma$ and $f$ are of the form
\begin{align*}
    b(t,x,a,\mu)&=\bar{b}\left(t,x,a,\int_{\sR^d}\hat{b}(t,y)\mu(\d y)\right),
    \quad
    \sigma(t,x,a,\mu)=\bar{\sigma}\left(t,x,a,\int_{\sR^d}\hat{\sigma}(t,y)\mu(\d y)\right),
    \\
   f(t,x,a,\mu)&=\bar{f}\left(t,x,a,\int_{\sR^d}\hat{f}(t,y)\mu(\d y)\right),
\end{align*}
with bounded 
measurable functions 
$\bar{b}:\sT\times \sR^d\times A\times \sR^n\to \sR^d$, 
$\bar{\sigma}:\sT\times \sR^d\times A\times \sR^n\to \sR^{d\times d}$,
$\bar{f}:\sT\times \sR^d\times A\times \sR^n\to \sR$,
$\hat{b}: \sT\times \sR^d\to \sR^n$,
$\hat{\sigma}: \sT\times \sR^d\to \sR^n$
and 
$\hat{f}: \sT\times \sR^d\to \sR^n$,  
for some $n\in \sN$.
There exists $C\ge 0$ such that for all 
$t,t'\in \sT$, $x,x'\in \sR^d$, $a\in A$ and $ y,y' \in \sR^n$, 
\begin{align*}
    &|\bar b(t,x,a,y)-\bar b(t',x',a,y')|
+
|\bar \sigma(t,x,a, y)-\bar \sigma(t',x',a, y')|
+
|\bar f(t,x,a,y)-\bar f(t',x',a,y')|
\\
&
\quad 
\le
C\left(|t-t'| 
+|x-x'| 
+|y-y'| 
\right),
\end{align*}
and each component of $\hat b$, $\hat \sigma$ and $\hat f$
is  in $C^{1,2}_b(\sT\times \sR^d)$. 
\end{enumerate}
\end{enumerate}

\end{Proposition}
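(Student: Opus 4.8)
The plan is to reduce (H.\ref{assum:duality}) to three ingredients and then supply each: (i) a classical solution $V\in C^{1,2}_b(\sT\times\sR^d)$ of the HJB equation \eqref{eq:hjb_diffusion}; (ii) a measurable selection $\phi$ of the pointwise minimizers of the Hamiltonian; and (iii) a weak solution of the closed-loop SDE \eqref{eq:state_diffusion} with $\gamma(\cdot|t,x)=\delta_{\phi(t,x)}$. Ingredients (ii) and (iii) are the soft ones. For (ii): once $V\in C^{1,2}_b$ is available, Condition \ref{item:measurable_selection_diffusion} makes $a\mapsto(\sL^{\bm\mu}V)(t,x,a)+f(t,x,a,\mu_t)$ lower-semicontinuous on the compact set $A$ for each fixed $(t,x)$, so the arg-min set is nonempty and compact-valued; joint measurability of the integrand in $(t,x,a)$ (using the boundedness and measurability from (H.\ref{assm:diffusion_state}) and the narrow continuity of $t\mapsto\mu_t$) together with a standard measurable-selection theorem (Filippov's lemma) yields a measurable $\phi:\sT\times\sR^d\to A$. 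For (iii): with $\phi$ fixed, the coefficients $(t,x)\mapsto b(t,x,\phi(t,x),\mu_t)$ and $(t,x)\mapsto\sigma(t,x,\phi(t,x),\mu_t)$ are bounded and measurable and, by Condition \ref{item:diffusion_nondegenerate}, the associated generator is uniformly elliptic; existence of a weak solution then follows from the classical solvability of the martingale problem for a uniformly elliptic bounded measurable diffusion matrix, combined with Girsanov's transformation to incorporate the bounded drift, which is exactly (H.\ref{assum:duality}\ref{item:feasible}). Hence the crux is (i).

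The first real step toward (i) is to convert the regularity of a fixed flow $\bm\mu\in\cU_\rho$ into regularity in $(t,x)$ of the frozen coefficients of \eqref{eq:hjb_diffusion}. Writing $\mu_t=\cL^\sP(X_t)$ as in \eqref{eq:admissible_flow}, boundedness of $\bar b,\bar\sigma$ and the Burkholder–Davis–Gundy inequality give $\sE^\sP[|X_t-X_s|^p]^{1/p}\le C|t-s|^{1/2}$ for $|t-s|\le T$, hence $W_p(\mu_t,\mu_s)\le C|t-s|^{1/2}$ and $\mu_t\in\cP_p(\sR^d)$ whenever $\rho\in\cP_p(\sR^d)$. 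In case \ref{item:semilinear_pde} this, combined with the assumed $\alpha$-Hölder bound on $(b,\sigma,f)$ in $(t,x,\mu)$, shows that for every fixed $a$ the maps $(t,x)\mapsto(b,\sigma,f)(t,x,a,\mu_t)$ are $\alpha/2$-Hölder in $t$ and $\alpha$-Hölder in $x$, uniformly in $a$. In case \ref{item:fully_nonlinear_pde}, Itô's formula applied to $\hat b(t,X_t)$ (likewise $\hat\sigma,\hat f$) shows that $t\mapsto\int_{\sR^d}\hat b(t,y)\mu_t(\d y)=\sE^\sP[\hat b(t,X_t)]$ is Lipschitz, since $\tfrac{\d}{\d t}\sE^\sP[\hat b(t,X_t)]$ is bounded whenever $\hat b\in C^{1,2}_b$ and $\bar b,\bar\sigma$ are bounded; hence, by the Lipschitz assumptions on $(\bar b,\bar\sigma,\bar f)$, the frozen coefficients $(t,x,a)\mapsto\bar b(t,x,a,\int\hat b(t,y)\mu_t(\d y))$, and similarly for $\sigma,f$, are Lipschitz in $(t,x)$, uniformly in $a$. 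In either case the frozen Hamiltonian $H(t,x,p,S):=\inf_{a\in A}\big(\tfrac12\tr(\sigma\sigma^\top(t,x,a,\mu_t)S)+b(t,x,a,\mu_t)^\top p+f(t,x,a,\mu_t)\big)$ inherits the same $(t,x)$-modulus locally uniformly in $(p,S)$, being a pointwise infimum of a uniformly equicontinuous family, is Lipschitz in $(p,S)$ with constant controlled by $\|\sigma\|_\infty,\|b\|_\infty$, and is uniformly elliptic in $S$ by Condition \ref{item:diffusion_nondegenerate} (which forces $\sigma\sigma^\top\ge\lambda^2 I$).

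With these structural facts, (i) becomes a matter of invoking the appropriate parabolic existence theory. In case \ref{item:semilinear_pde} the equation \eqref{eq:hjb_diffusion} is semilinear,
\[
\partial_t V+\tfrac12\tr\!\big(\sigma\sigma^\top(t,x,\mu_t)\operatorname{Hess}_xV\big)+H_0(t,x,\nabla_xV)=0,\qquad V(T,\cdot)=g(\cdot,\mu_T),
\]
with $H_0(t,x,p):=\inf_{a\in A}\big(b(t,x,a,\mu_t)^\top p+f(t,x,a,\mu_t)\big)$ that is $\alpha$-Hölder in $(t,x)$ and globally Lipschitz in $p$, uniformly elliptic bounded $\alpha$-Hölder diffusion, and $g(\cdot,\mu_T)\in C^{2,\alpha}_b(\sR^d)$; the comparison principle yields $\|V\|_\infty\le\|g\|_\infty+T\|f\|_\infty$, interior-plus-boundary $L^q$ (or Bernstein-type) estimates give $\|\nabla_xV\|_\infty\le C$, and Schauder theory then upgrades $V$ to $C^{1+\alpha/2,2+\alpha}_b(\sT\times\sR^d)$, with existence obtained from these a priori bounds by a Leray–Schauder fixed-point argument as in standard semilinear parabolic theory. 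In case \ref{item:fully_nonlinear_pde} the equation is a uniformly parabolic Bellman equation, concave in $(\operatorname{Hess}_xV,\nabla_xV)$, with coefficients Lipschitz in $(t,x)$ uniformly over the compact $A$ and terminal data in $C^{2,\alpha}_b$; Krylov–Safonov estimates, the parabolic Evans–Krylov theorem, and Schauder bootstrapping produce a classical solution $V\in C^{1,2}_b(\sT\times\sR^d)$, and existence then follows by the method of continuity. In both cases the measurable selection from the first paragraph supplies $\phi$, completing the verification of (H.\ref{assum:duality}); since the construction is carried out flow-by-flow, no uniformity of the constants over $\cU_\rho$ is needed.

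I expect the main obstacle to be case \ref{item:fully_nonlinear_pde}: obtaining a \emph{global}-in-space classical solution of the Bellman equation on the unbounded domain $\sR^d$, regular up to the terminal slice $\{t=T\}$, requires patching the interior Evans–Krylov $C^{2,\alpha}$ estimate with terminal-boundary estimates (using $g(\cdot,\mu_T)\in C^{2,\alpha}_b$) and a uniform-in-$x$ localization, so that the right-hand side remains Hölder after the Bellman nonlinearity is absorbed. A secondary technical point, present in both cases, is to verify that the $(t,x)$-modulus of continuity of the frozen Hamiltonian is genuinely controlled by the data of the MFG, since the flow $\bm\mu$ enters the coefficients only through the $\tfrac12$-Hölder (resp.\ Lipschitz) time-regularity established in the second paragraph.
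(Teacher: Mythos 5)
Your proposal follows essentially the same route as the paper: reduce (H.2) to classical solvability of the frozen HJB equation plus a measurable selection and a weak-solution existence result, establish the time regularity of the flow ($W_p(\mu_t,\mu_s)\le C|t-s|^{1/2}$ via BDG, and Lipschitz continuity of $t\mapsto\int\hat b(t,y)\mu_t(\d y)$ via It\^o for case (b)), and then split into the semilinear case (Schauder theory) and the fully nonlinear case (Evans--Krylov), exactly as in the paper's proof, which relies on its Lemma \ref{lemma:regularity_mu} and Proposition \ref{prop:general_semilinear_regularity}. The only genuine difference is in how the semilinear existence is implemented: you invoke a Leray--Schauder fixed-point argument with Bernstein/$L^q$-type gradient bounds, whereas the paper proves a self-contained result (Proposition \ref{prop:general_semilinear_regularity}) by mollifying the coefficients, deriving uniform $C^{1+\delta/2,2+\delta}$ bounds from the linear Schauder estimate combined with an interpolation inequality and Young's inequality, and passing to the limit by Arzel\`a--Ascoli; the paper's detour is needed precisely because the Hamiltonian is only H\"older in $(t,x)$ with a modulus that grows linearly in $|p|$ (cf.\ \eqref{eq:H_regularity}), a point your sketch acknowledges ("locally uniformly in $(p,S)$") but then glosses over when you call $H_0$ "$\alpha$-H\"older in $(t,x)$ and globally Lipschitz in $p$"; this is harmless for your strategy once the gradient bound is in hand, but it is the step where the paper does real work. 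Your treatments of the measurable selection (compact $A$, lower semicontinuity) and of the weak solution (uniform ellipticity, bounded measurable coefficients, Girsanov) match the paper's citations to a measurable selection theorem and to the weak-existence result in Yong--Zhou, respectively.
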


\begin{Remark}
\label{rmk:pde_regularity}

 Proposition \ref{prop:pde_regularity} establishes the solvability of \eqref{eq:hjb_diffusion} 
in $C^{1,2}_b(\sT\times \sR^d)$
using different approaches, depending on whether the diffusion coefficient is controlled or not.

When the diffusion coefficient is uncontrolled, the HJB equation becomes semilinear. In this case, its solution regularity can be analyzed using Schauder estimates for linear parabolic PDEs, along with interpolation inequalities over H\"{o}lder spaces (see Proposition \ref{prop:general_semilinear_regularity}). This approach enables the consideration of coefficients with general mean field dependence, which are H\"{o}lder continuous in the Wasserstein metric. 

When the diffusion coefficient is controlled, 
the HJB equation is fully nonlinear,
and     the Evans-Krylov theorem (see \cite[Theorem 6.4.3, pp.~301]{krylov1987nonlinear}) is applied to ensure the existence of a classical solution.
The Evans-Krylov theorem requires all coefficients to be Lipschitz continuous in both time and space variables,
which  imposes restrictions on the coefficients in terms of their mean field dependence, as the flow $ \sT\ni t \mapsto \mu_t \in \cP_p(\sR^d)$  is only $1/2$-H\"{o}lder continuous.
 These structural conditions may  be relaxed if more refined regularity results are employed to verify (H.\ref{assum:duality}\ref{item:pde_regularity}).

\end{Remark}

\section{Comparison with existing approaches for MFGs}\label{sec:distinction}

{ 
Having established   the equivalence between the set of NEs   and solutions to the  primal-dual system,  we  compare in this section the primal-dual characterization with existing approaches for MFGs,  in particular the coupled HJB-FB (see e.g., \cite{huang2006large, lasry2007mean})
and the BSDE approaches (for instance, \cite{carmona2013probabilistic,cardaliaguet2019master}).

 We first show that when the associated Hamiltonian has a unique optimizer and the equilibrium flow has full support, 
the primal-dual system   \eqref{eq:NE_primal_dual_diffusion}  is consistent with the 
HJB-FP approach.
We then present two examples without these restrictions, where this primal-dual approach remains applicable and the HJB-FP and the BSDE approaches fail, as the latter two rely on the uniqueness of the optimizer for the Hamiltonian. Moreover,
when the equilibrium    flow 
does not have full support, 
an admissible dual variable  
  is not required to  satisfy   the HJB equation 
  for all states, hence  ensures the applicability of the primal-dual approach in characterizing NEs even when the HJB equation does not admit a classical or even a continuous solution. 
}

 Specifically, recall that  
the  HJB-FP approach 
assumes 
 the existence of a \textit{unique}
 function 
$\pi:   \sT\times \sR^d\times \sR^d\times
\sR^{d\times d}
\times \cP(\sR^d) \mapto A$ satisfying  
\begin{equation}
\label{eq:feedback_map}
 \pi (t,x, y,z,\mu) =\argmin_{a\in A}H(t,x,y,z,a, \mu),
\end{equation} 
where $H: \sT\times \sR^d\times \sR^d\times
\sR^{d\times d}
\times A\times \cP(\sR^d) \to \sR$ is the Hamiltonian defined by  
$$
H(t,x,y,z,a,\mu)\coloneqq 
\frac{1}{2}\operatorname{tr}\Big(\big(\sigma \sigma^\top\big) (t,x,a,\mu) z \Big)
        +b(t,x,a,\mu)^\top y         
        +f(t,x,a,\mu);
$$
 see 
 \cite[(H.5)]{huang2006large},
 \cite[Equation (13)]{cardaliaguet2019master},
 and \cite[Assumption 3.1.(b)]{bayraktar2019rate}. 
Given  $\pi$, consider the following coupled HJB-FP system: for all $(t,x)\in \sT\times \sR^d$,  
\begin{subequations}
\label{eq:FP+HJB}
\begin{align}[left = \empheqlbrace\,]
\begin{split}  \label{eq:HJB}
 \partial_t V(t,x)&+\min_{a\in A}
 H\big(t,x,(\nabla_x V)(t,x),(\operatorname{Hess}_x V)(t,x),a,\mu_t\big)=0,  
\quad V(T,x)= g(x, \mu_T),
  \end{split}
  \\
  \begin{split}
  \label{eq:FP}
       \partial_t \mu_t&= 
     -\operatorname{div}[b(t,\cdot,  {\phi}(t,\cdot),\mu_t)\mu_t]
       +\operatorname{tr}\big[\operatorname{Hess}_x\big( (\sigma\sigma^\top)(t,\cdot,{\phi}(t,\cdot),\mu_t)\mu_t\big) \big], 
\quad  \mu_0=\rho, 
\end{split}\\
  \begin{split}
      {\phi}(t,x) & = \pi\big(t,x,
(\nabla_x V)(t,x), (\operatorname{Hess}_x V)(t,x),\mu_t\big).
  \end{split}
\end{align}
 \end{subequations}  
 When $\pi$ is sufficiently regular, \eqref{eq:FP+HJB}
 admits a classical solution, which corresponds  to  an NE  of the MFG. 
Note that in this HJB-FP approach, the FP equation \eqref{eq:FP}  depends on the unique choice of $\pi$ satisfying \eqref{eq:feedback_map}.

{

The following theorem shows that when the Hamiltonian has a unique optimizer and the equilibrium flow has full support, 
the primal-dual system   \eqref{eq:NE_primal_dual_diffusion}  coincides with a weak formulation of the 
HJB-FP system  \eqref{eq:FP+HJB}.

 \begin{Theorem}
 \label{thm:HJB-FP-PD}
     Suppose (H.\ref{assm:diffusion_state})
holds. 
Let $\bm \mu^*\in \cP(\sR^d|\sT)$ be narrowly continuous and $\psi^*\in C^{1,2}_b(\sT\times \sR^d)$.
Assume   that
  there exists  a  {unique}
  measurable function 
$\phi:   \sT\times \sR^d   \mapto A$ such that   
\begin{equation}
\label{eq:unique_minimizer}
 \phi (t,x) =\argmin_{a\in A}H\big(t,x,(\nabla_x \psi^*)(t,x),(\operatorname{Hess}_x \psi^*)(t,x),a, \mu^*_t  \big),
 \quad \forall (t,x)\in \sT\times \sR^d,
\end{equation} 
the functions  
$ x\mapsto g(x,\mu^*_T)$
and 
$ (t,x) \mapsto \min_{a\in A} H \big(t,x,(\nabla_x \psi^*)(t,x),(\operatorname{Hess}_x \psi^*)(t,x),a, \mu^*_t  \big) $  are    continuous,
and
\begin{equation}
\label{eq:full_support}
\operatorname{supp}(\mu^*_t)
 \coloneqq {\{x\in  \sR^d \mid \mu^*_t(\sB_r(x))>0, \; \forall r>0\}}
=\sR^d,\quad   
 \forall t\in (0,T],    
\end{equation}
where $\sB_r(x) \coloneqq  \{y\in \sR^d\mid |y-x|< r\}$.
Then 
for any   $ \xi^*\in  \cM_{+}(\sT\times \sR^d \times A)$,
the triple $(\bm 
\mu^*, \xi^*,\psi^*)$
satisfies
  the  primal-dual system \eqref{eq:NE_primal_dual_diffusion}
  if and only if
$\xi^*(\d t,\d x,\d a)=\delta_{\phi(t,x)}(\d a)\mu_t^*(\d x)\d t$
and $(\bm\mu^*, \psi^*)$
satisfies the following system:
  \begin{subequations}
  \label{eq:comparision_FP_HJB}
\begin{align}[left = \empheqlbrace\,]
\begin{split} \label{eq:comparison_FP}  &\int_{\sR^d}  \psi(T,x)\mu^*_T(\d x)  
 - \int_{\sR^d} \psi(0,x)\rho(\d x)\\
& \qquad =  \int_{\sT \times \sR^d \times A}\Big(\big(\sL^{\bm \mu^*}  \psi\big)(t,x,\phi(t,x))+(\partial_t\psi)(t,x)\Big)\mu^*_t( \d x)\d t,
    \quad \forall \psi \in \cW,
\end{split}\\
\begin{split} \label{eq:comparison_HJB}
&\partial_t \psi^*(t,x)+\min_{a\in A} H\big(t,x,(\nabla_x \psi^*)(t,x),(\operatorname{Hess}_x \psi^*)(t,x),a, \mu^*_t  \big)=0,  
\quad \forall (t,x)\in  \sT\times \sR^d.
\\
&\psi^*(T,x)= g(x, \mu^*_T), \quad \forall x\in \sR^d.
   \end{split}
\end{align}
\end{subequations}

\end{Theorem}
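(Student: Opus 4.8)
The plan is to run the argument through the complementary-slackness reformulation of Corollary~\ref{cor:complementary_conditions_diffusion}, and then to upgrade the almost-everywhere identities it produces to pointwise statements by exploiting the full-support hypothesis \eqref{eq:full_support} together with the assumed continuity of $x\mapsto g(x,\mu^*_T)$ and of $(t,x)\mapsto\min_{a}H(t,x,\nabla_x\psi^*(t,x),\operatorname{Hess}_x\psi^*(t,x),a,\mu^*_t)$. Throughout I will use the elementary ``Hamiltonian identity'' $(\sL^{\bm\mu^*}\psi)(t,x,a)+f(t,x,a,\mu^*_t)=H(t,x,\nabla_x\psi(t,x),\operatorname{Hess}_x\psi(t,x),a,\mu^*_t)$ for $\psi\in\cW$, and the function $G(t,x,a):=\partial_t\psi^*(t,x)+(\sL^{\bm\mu^*}\psi^*)(t,x,a)+f(t,x,a,\mu^*_t)$, together with $F(t,x):=\inf_{a\in A}G(t,x,a)=\partial_t\psi^*(t,x)+\min_{a\in A}H(t,x,\nabla_x\psi^*(t,x),\operatorname{Hess}_x\psi^*(t,x),a,\mu^*_t)$, which is continuous by hypothesis.

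\textbf{Only-if direction.} Suppose $(\bm\mu^*,\xi^*,\psi^*)$ solves \eqref{eq:NE_primal_dual_diffusion}. By Corollary~\ref{cor:complementary_conditions_diffusion}, given \eqref{eq:NE_primal_constraint_diffusion}, \eqref{eq:NE_dual_constraint_1_diffusion} and \eqref{eq:NE_dual_constraint_2_diffusion}, the value identity \eqref{eq:NE_primal_dual_value_diffusion} is equivalent to the complementarity conditions \eqref{eq:complementary_diffusion}. The subsolution inequality \eqref{eq:NE_dual_constraint_2_diffusion} says $G\ge0$ everywhere, while the second line of \eqref{eq:complementary_diffusion} gives $\int G\,\d\xi^*=0$, hence $G=0$ $\xi^*$-a.e.; since $G\ge F\ge0$, also $F=0$ $\xi^*$-a.e., and by the consistency condition \eqref{eq:NE_consistency_constraint_diffusion} the $(t,x)$-marginal of $\xi^*$ equals $\mu^*_t(\d x)\,\d t$, so $F=0$ for $\mu^*_t(\d x)\,\d t$-a.e.\ $(t,x)$. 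A short argument, using that $t\mapsto\mu^*_t$ is narrowly continuous (so $t\mapsto\mu^*_t(U)$ is lower semicontinuous for open $U$) together with $\operatorname{supp}(\mu^*_t)=\sR^d$ for $t\in(0,T]$, shows $\operatorname{supp}(\mu^*_t(\d x)\,\d t)\supseteq(0,T]\times\sR^d$; since $F$ is continuous, nonnegative and vanishes on a set of full measure, the open set $\{F>0\}$ is disjoint from this support, hence contained in $\{0\}\times\sR^d$, which has empty interior, so $F\equiv0$ on $\sT\times\sR^d$. This is precisely the interior HJB equation in \eqref{eq:comparison_HJB}. Returning to $G=0$ $\xi^*$-a.e.\ and using $F\equiv0$, we get $H(t,x,\nabla_x\psi^*,\operatorname{Hess}_x\psi^*,a,\mu^*_t)=\min_{a'\in A}H(t,x,\nabla_x\psi^*,\operatorname{Hess}_x\psi^*,a',\mu^*_t)$ for $\xi^*$-a.e.\ $(t,x,a)$, so by uniqueness of the minimizer $a=\phi(t,x)$ for $\xi^*$-a.e.\ $(t,x,a)$; disintegrating $\xi^*$ over its $(t,x)$-marginal via Lemma~\ref{lemma:disintegration} then yields $\xi^*(\d t,\d x,\d a)=\delta_{\phi(t,x)}(\d a)\mu^*_t(\d x)\,\d t$, and substituting this into \eqref{eq:NE_primal_constraint_diffusion} gives \eqref{eq:comparison_FP}. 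Finally, the first line of \eqref{eq:complementary_diffusion} combined with \eqref{eq:NE_dual_constraint_1_diffusion} gives $g(x,\mu^*_T)=\psi^*(T,x)$ for $\mu^*_T$-a.e.\ $x$; since $\operatorname{supp}(\mu^*_T)=\sR^d$ and both functions are continuous, equality holds for all $x$, completing the terminal condition in \eqref{eq:comparison_HJB}.

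\textbf{If direction.} Assume $\xi^*(\d t,\d x,\d a)=\delta_{\phi(t,x)}(\d a)\mu^*_t(\d x)\,\d t$ and $(\bm\mu^*,\psi^*)$ solves \eqref{eq:comparision_FP_HJB}. Then \eqref{eq:NE_primal_constraint_diffusion} is exactly \eqref{eq:comparison_FP} rewritten through this $\xi^*$; \eqref{eq:NE_consistency_constraint_diffusion} holds since the $(t,x)$-marginal of $\xi^*$ is $\mu^*_t(\d x)\,\d t$; the terminal condition in \eqref{eq:comparison_HJB} gives \eqref{eq:NE_dual_constraint_1_diffusion} with equality; and for every $(t,x,a)$ the Hamiltonian identity and the interior equation in \eqref{eq:comparison_HJB} give $G(t,x,a)=\partial_t\psi^*(t,x)+H(t,x,\nabla_x\psi^*,\operatorname{Hess}_x\psi^*,a,\mu^*_t)\ge\partial_t\psi^*(t,x)+\min_{a'\in A}H(\cdot)=0$, i.e.\ \eqref{eq:NE_dual_constraint_2_diffusion}. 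For \eqref{eq:NE_primal_dual_value_diffusion} we again invoke Corollary~\ref{cor:complementary_conditions_diffusion}, reducing it to \eqref{eq:complementary_diffusion}: its first identity holds because $g(\cdot,\mu^*_T)=\psi^*(T,\cdot)$ everywhere, and its second holds because on $\operatorname{supp}\xi^*$ one has $a=\phi(t,x)$, so by \eqref{eq:unique_minimizer} $H(t,x,\nabla_x\psi^*,\operatorname{Hess}_x\psi^*,\phi(t,x),\mu^*_t)=\min_{a'\in A}H(\cdot)=-\partial_t\psi^*(t,x)$, i.e.\ $G\equiv0$ on $\operatorname{supp}\xi^*$, so $\int G\,\d\xi^*=0$.

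\textbf{Main obstacle.} The delicate point is the passage from the $\mu^*_t(\d x)\,\d t$-a.e.\ identity $F=0$ to the pointwise identity $F\equiv0$ on $\sT\times\sR^d$ (equivalently \eqref{eq:comparison_HJB}): this is exactly where the full-support hypothesis \eqref{eq:full_support} and the continuity of $\min_a H$ and of $g(\cdot,\mu^*_T)$ are used, and it first requires the auxiliary fact that the time--space occupation measure $\mu^*_t(\d x)\,\d t$ has dense support, for which the narrow continuity of the flow enters. A secondary technical point is the disintegration step deducing $\xi^*=\delta_{\phi}\otimes(\mu^*_t(\d x)\,\d t)$ from ``$a=\phi(t,x)$ $\xi^*$-a.e.'', which relies on the measurability of $\phi$ and on Lemma~\ref{lemma:disintegration}. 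The ``if'' direction, by contrast, is essentially a direct substitution and uses only \eqref{eq:unique_minimizer} and \eqref{eq:comparision_FP_HJB}.
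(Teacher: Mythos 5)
Your proposal is correct and follows essentially the same route as the paper's proof: reduce the value identity to the complementarity conditions of Corollary \ref{cor:complementary_conditions_diffusion}, upgrade the resulting $\mu^*_t(\d x)\d t$-a.e.\ identities to pointwise ones via the continuity and full-support hypotheses (the paper phrases this as a contradiction on a small cylinder $U(t_0)\times\sB_r(x_0)$, you phrase it through the support of the occupation measure $\mu^*_t(\d x)\d t$ --- the same mechanism), then identify $\gamma(\d a|t,x)=\delta_{\phi(t,x)}$ from the uniqueness of the Hamiltonian's minimizer and verify the converse by direct substitution, exactly as in the paper. The only cosmetic caveat is your phrase ``on $\operatorname{supp}\xi^*$ one has $a=\phi(t,x)$'', which is not literally accurate for merely measurable $\phi$, but the intended computation $\int G\,\d\xi^*=\int_{\sT\times\sR^d} G(t,x,\phi(t,x))\,\mu^*_t(\d x)\,\d t=0$ is of course valid.
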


\begin{Remark}
\label{rmk:hjb_fp_pd}
    Note that the equivalence between the primal-dual system and the HJB-FP system, as established in Theorem \ref{thm:HJB-FP-PD}, relies crucially on Condition \eqref{eq:unique_minimizer} regarding the uniqueness of the Hamiltonian's optimizer, as well as  the full support condition \eqref{eq:full_support} on the equilibrium flow $\bm \mu^* $. 
 Indeed, 
   Condition \eqref{eq:unique_minimizer}
   ensures that 
   for any $(t,x)\in \sT\times \sR^d$,
the Markov policy derived from  $\xi^*$  is concentrated on  
the unique optimizer $\phi(t,x)$ of the Hamiltonian, 
which yields    the  optimal strict policy. 
The full support condition 
\eqref{eq:full_support}
and Corollary \ref{cor:complementary_conditions_diffusion}
   ensure   that 
   the   dual variable $\psi^*$ 
satisfies the HJB equation \emph{everywhere}.
Equation \eqref{eq:comparison_FP} 
then 
indicates that $ \bm\mu^*=(\mu^*_t)_{t\in \sT} $ is a weak solution to the FP equation \eqref{eq:FP} for the controlled state process, whose coefficients involve the optimal   policy  $ \phi $ and the equilibrium   flow $  (\mu^*_t)_{t\in \sT}$. 
\end{Remark}

}

  To ensure the Hamiltonian has a  unique optimizer, existing works assume that $A\ni a\mapsto H(t,x,y,z,a,\mu)$ is strictly convex, which necessitates the convexity of the action set $A$, the linearity of $b$ and $\sigma\sigma^\top$   in $a$, and the strict convexity of the running cost function $f$.

  Unfortunately,
  when the Hamiltonian   is nonconvex, such is the case when $A$ is finite, \eqref{eq:feedback_map} 
  may have multiple optimizers. 
  In this case,  one may   define   the coupled system \eqref{eq:FP+HJB} by 
  selecting a specific feedback map  $\pi$.  
  Consequently, 
  \eqref{eq:FP+HJB}  provides only {\it sufficient} conditions for a particular NE and  for a given choice of $\pi$.
  However, with an inappropriate choice of $\pi$,   \eqref{eq:FP+HJB} may   fail to admit any solution, even when the MFG has an NE 
\cite{knochenhauerlong}. 
Similarly,  the BSDE approach also assumes  the convexity of the Hamiltonian 
 to characterize the optimal control of the representative player 
(\cite[Lemma
2.1]{carmona2013probabilistic},
 \cite[Assumption (A4), p.~156]{carmona2018probabilistic}).

In contrast, our primal-dual approach  
 characterizes the set of \emph{all} NEs   for  general MFGs
 without assuming the uniqueness of the feedback map $\pi$: it  
avoids the prior selection of $\pi$ by 
 including the   policy $\phi$ as part of the solution.
 
 The advantage of the primal-dual approach over the   HJB-FP approach when \eqref{eq:feedback_map}  has multiple optimizers is demonstrated  by the following example.

\begin{Example}
\label{ex:multiple_minimizer}
Consider a degenerate MFG 
\eqref{controlled_cl_diffusion}
with
    $f=g= 0$. 
 Due to the constant cost functions,
  any $\gamma\in \cP(A\mid \sT \times \sR^d)$ and 
the   corresponding controlled state process  
constitute an NE   of the MFG.
Moreover, 
the primal-dual system 
\eqref{eq:NE_primal_dual_diffusion} characterizes all equilibria,
since 
each NE 
corresponds to  a solution 
to \eqref{eq:NE_primal_dual_diffusion}
with $\psi^*\equiv 0$. 

However, the coupled system \eqref{eq:FP+HJB}  does not characterize all NEs. 
Indeed, for any $\bm \mu\in \cP(\sR^d|\sT)$,    \eqref{eq:HJB} has a unique solution 
$V\equiv 0$. But the solution to  
\eqref{eq:FP} depends on   the map $(t,x,\mu)\mapsto \pi(t,x,0,0,\mu)$, which can be chosen as any measurable function 
from  $\sT\times \sR^d\times \cP(\sR^d)$ to $A$.
If choosing $\pi\equiv a $  for a given  $a\in A$,
\eqref{eq:FP+HJB}    
  only retrieves one NE   of the MFG. 
Moreover, if choosing a discontinuous map  $ \mu \mapsto \pi(t,x, 0,0 , \mu) $ may result in  \eqref{eq:FP}   not admitting a solution,  and thus 
\eqref{eq:FP+HJB} may fail  to retrieve any NE. 

\end{Example}

{
Moreover,
when the equilibrium    flow 
$\bm \mu^*$  
does not have full support, 
the  dual variable $\psi^*$
in \eqref{eq:NE_primal_dual_diffusion}
  is not required to  satisfy   the HJB equation \eqref{eq:HJB}  
  for all states.} 
  This distinctive feature of the primal-dual approach ensures its applicability in characterizing NEs even when the HJB equation does not admit a classical or even a continuous solution. 
  This distinction is reflected by \eqref{eq:complementary_diffusion} and further illustrated in the following example.

 \begin{Example}\label{ex:comparison_with_HJB-FP}

Consider an MFG 
\eqref{controlled_cl_diffusion}
with an action space $A=\{-1,1\}$, a one-dimensional (deterministic) state process with $b(t,x,a,\mu)=a$, $\sigma=0$ and $\rho=\delta_{x_0}$ for some $x_0>0$, and cost functions $f(t,x,a,\mu)=1+\big(x_0+t-\int_\sR y\mu(\d y)\big)^2$ and $g(x,\mu)=-|x|$.\footnotemark 
It is easy to verify that  the mean field flow  $\mu_t^*=\delta_{x_0+t}$  and the optimal control $a^*_t=1$ for all $t\in\sT$ is an NE of the MFG. 

\footnotetext{To simplify the presentation, we consider an MFG with unbounded cost functions, which, strictly speaking, does not satisfy (H.\ref{assm:diffusion_state}). However, since the system is deterministic, the cost functions can be truncated outside a sufficiently large domain without altering the NE. }

 However,  the HJB-FP approach
 fails to capture this NE, as the HJB equation  \eqref{eq:HJB}  
does not admit a classical solution. 
Indeed, at the equilibrium,   \eqref{eq:HJB} takes the form:  
\begin{equation}
\label{eq:hjb_deterministic}
\partial_t V (t,x)-|
(\partial_x V)(t,x)|+1=0, 
\quad (t,x)\in \sT\times \sR;
\quad V(T,x)=-|x|,
\quad x\in \sR.
\end{equation}
The unique viscosity solution to 
\eqref{eq:hjb_deterministic} is given by   $V(t,x)=-|x|$ for all 
$(t,x)\in \sT\times \sR$,
which is not differentiable at $0$.
In fact, \eqref{eq:hjb_deterministic} does not admit any   $C^1$ solution. 

In contrast, 
the proposed primal-dual system \eqref{eq:NE_primal_dual_diffusion}   retrieves the NE by  taking  
$\mu_t^*=\delta_{x_0+t}$, 
 $\xi^*(\d t,\d x,\d a)=\d t\delta_{x_0+t}(\d x)\delta_{1}(\d a)$, and any 
 $\psi^*\in C^{1,2}(\sT\times \sR)$ satisfying
      \begin{align}[left = \empheqlbrace\,]
 \label{eq:primal_dual_deterministic}
 \begin{split}
         \psi^*(0,x_0)&=-x_0, \quad \psi^*(T,x)\leq -|x|, \quad \forall x\in \sR,\\
         \partial_t \psi^*(t,x)&+a(\partial_x \psi^*)(t,x)+1 \geq 0, \quad  \forall   (t, x,a) \in \sT\times  \sR\times \{-1,1\}.
  \end{split}       
     \end{align}
 There are infinitely many feasible dual variables $\psi^*$ satisfying \eqref{eq:primal_dual_deterministic}.
For instance,  one can choose any  $\phi\in C^1(\sR)$ satisfying  
     \begin{equation*}
         \phi(x_0)=-1; \quad -1<\phi(x)<0, \quad \forall x\le 0; \quad 0<\phi(x)<1,\quad \forall x>0,
     \end{equation*}
    and   define the dual variable by
    \begin{align}
    \label{eq:dual_variable_example2}
        \psi^*(t,x)=\begin{cases}
            -x_0+\int_{x_0}^x \phi(y)\d y, &\quad \forall (t,x)\in \sT\times (-\infty, x_0),\\
            -x, &\quad  \forall 
            (t,x)\in \sT\times
            [x_0,\infty).
        \end{cases}
    \end{align} 
It is worth noting that 
  the  dual variables \eqref{eq:dual_variable_example2} coincide with the solution $V$ to \eqref{eq:hjb_deterministic}
only on $ \sT\times [x_0,\infty)$,
a set containing the support of the mean field  flow $\mu^*$.

Note that the above observation extends to any (possibly discontinuous) terminal cost $ g $ satisfying $ g(x,\mu) \geq -|x| $ for all $ x < x_0 $. In such cases, the associated HJB equation may not even admit a \emph{continuous} solution, and hence the HJB-FP approach fails. However, the primal-dual framework remains applicable and can still characterize the NE.

 \end{Example}

 \section{Proofs of main results}
\label{sec:proofs}

 \subsection{Proofs of    Propositions  \ref{prop:cl_measure}, \ref{prop:cont_margin} and
\ref{prop:measure_diffusion}} 
 
 \begin{proof}[Proof of Proposition \ref{prop:cl_measure}]

  Let $(\bm X,\gamma)\in \cA_{\rm cl}(\bm \mu)$ be defined on a filtered probability space $(\Omega, \cF,\sF, \sP)$. Observe that   $\nu$, $\xi$ are two well-defined nonnegative finite measures on $\sR^d$ and  $\sT\times \sR^d\times A$ respectively. Specifically,  $\nu$ is defined as the law of $X_T$, the countable additivity  of $\xi$ follows from the dominated convergence theorem, and the nonnegativeness and  finiteness of $\xi$ follows from the fact that $\gamma$ is a probability kernel  and $\sT$ is a finite interval. Hence it remains to verify that $(\nu,\xi)$ satisfies  \eqref{eq:condition_primal_cx_diffusion}. To this end, fix $\psi\in \cW=C_b^{1,2}(\sT\times\sR^d)$. As $(\bm X,\gamma)\in\cA_{\rm cl}(\bm \mu)$, using It\^o's formula, 
\begin{equation*}
\begin{aligned}
    \psi(T,X_T)&-\psi(0,X_0)=\int_0^T(\nabla_x \psi)^\top(t,X_t)\sigma^{\bm \mu, \gamma} (t,X_t)\d W_t+\int_0^T\partial_t \psi(t,X_t)\d t
    \\
    &+\int_0^T\bigg(\frac{1}{2}\textrm{tr}\Big(\big(\sigma^{\bm \mu, \gamma}(\sigma^{\bm \mu, \gamma})^\top\big) (t,X_t)(\textrm{Hess}_x\psi)(t,X_t)\big)
+b^{\bm \mu, \gamma}(t,X_t)^\top 
(\nabla_x\psi)(t,X_t)\bigg)\d t.
\end{aligned}
\end{equation*}
Taking the expectation on both sides and   using \eqref{eq:def_bsigmamu} and   the boundedness of $\psi$ and $\sigma$,  
\begin{equation}\label{tmp4}
    \begin{aligned}
    0=\sE\bigg[\psi(T,X_T)-\psi(0,X_0)-\int_0^T\int_A\bigg(\frac{1}{2}&\textrm{tr}\big((\sigma\sigma^\top) (t,X_t,a,\mu_t)(\textrm{Hess}_x\psi)(t,X_t)\big)\gamma(\d a|t,X_t)
    \\
    +\partial_t \psi(t,X_t)&+b(t,X_t,a,\mu_t)^\top (\nabla_x\psi)(t,X_t)\gamma(\d a|t,X_t)\bigg)\d t\bigg]
    \\
    =\sE\bigg[\psi(T,X_T)-\psi(0,X_0)-\int_0^T\int_A\Big((\sL^{\bm \mu}&  \psi\big)(t,X_t,a)+\partial_t\psi(t,X_t)\Big)\gamma(\d a|t,X_t)\d t\bigg].
    \end{aligned}
\end{equation}
By the definition of $\nu$  and $\cL^\sP(X_0)=\rho$, we have 
$$\sE\big[\psi(T,X_T)\big]=\int_{\sR^d}\psi(T,x)\nu(\d x),\quad\sE\big[\psi(0,X_0)\big]=\int_{\sR^d}\psi(0,x)\rho(\d x).$$
By to the definition of $\xi$, for all bounded measurable function $\varphi:\sT\times \sR^d\times A$, we have
\begin{equation}\label{def:intexi_cont}
\int_{\sT\times A }\varphi(t,x,a)\xi(\d t, \d x,\d a) = \sE\bigg[\int_\sT \int_A \varphi(t,X_t,a)\gamma(\d a|t,X_t)\d t\bigg].
\end{equation}
Substituting  them back into \eqref{tmp4}, we have
\begin{align*}
\begin{split}
&    \int_{\sR^d}  \psi(T,x)\nu(\d x)  
- \int_{\sR^d} \psi(0,x)\rho(\d x)
\\
&\quad  =      \int_{\sT \times \sR^d \times A}\Big(\big(\sL^{\bm \mu}  \psi\big)(t,x,a)+(\partial_t\psi)(t,x)\Big)\xi(\d t, \d x,  \d a),
\quad \forall \psi \in \cW,
\end{split}
\end{align*}
which verifies \eqref{eq:condition_primal_cx_diffusion}.
The fact that 
$J^{\bm \mu}_{\rm cl}(\bm X,  \gamma)=J^{\bm \mu}_P(\nu,\xi) $ follows from the definition of $\nu$ and $\xi$.

 \end{proof}
 
 \begin{proof}[Proof of Proposition \ref{prop:cont_margin}]
 
 We first prove Item \ref{item:timemargin_Les_cont}. 
    Fix $(v,\xi)\in\mathcal{D}_{{P}}(\bm \mu)$. For each $0<a<  T$, and $n\in \sN\cap [ 1/a,\infty)$, let $\varphi^{(n)}\in C_b^1(\sT)$ be such that 
$\varphi^{(n)}(t)= a-t $ for all $t\in [0,a-1/n]$, $\varphi^{(n)}(t)=0$ for all $t\in (a,T]$ and $-2\leq (\varphi^{(n)})'(t)\leq 0$ for all $t\in [0,T]$.
Let  $\psi^{(n)}\in C_b^{1,2}(\sT\times\sR^d)$ be
the natural extension of $\varphi^{(n)}$ onto $\sT\times \sR^d$ satisfying 
  $\psi^{(n)}(t,x)=\varphi^{(n)}(t)$ for all $(t,x)$.  
 Setting $\psi= \psi^{(n)}$ in \eqref{eq:condition_primal_cx_diffusion} and noting that $\sL^{\bm \mu}\psi^{(n)}=0$ since $\psi$ does not depend on $x$ yield
\begin{align*}
0-a\rho(\sR^d)=\int_0^T(\varphi^{(n)})'(t)\xi^\sT(\d t)&=\int_0^{a-\frac1n}(-1)\xi^\sT(\d t)+\int_{a-\frac1n}^a(\varphi^{(n)})'(t)\xi^\sT(\d t)
\\
&=- \xi^\sT\left((0,a-\frac1n)\right)+\int_{a-\frac1n}^a(\varphi^{(n)})'(t)\xi^\sT(\d t).
\end{align*}
Note that 
$ \lim_{n\to \infty}\int_{a-\frac1n}^a(\varphi^{(n)})'(t)\xi^\sT(\d t) = 0$,
due to the bound
 $\sup_{n\in \sN, t\in [0,T]}|(\varphi^{(n)})'(t)|\leq 2$, 
and  the dominated convergence theorem.
This implies that  $a=\xi^\sT((0,a))$
for all $a\in (0,T)$,
and hence   $\xi^\sT$ is the Borel measure on $\sT$.

To prove  Item \ref{item:statemargin},
 by Item \ref{item:timemargin_Les_cont} and 
 the definition of a disintegration kernel, we have for any bounded measurable function $\varphi:\sT\times\sR^d\times A\to \sR$ that
\begin{equation}\label{equ:disint_diffusion}
    \begin{aligned}
   \int_{\sT\times\sR^d\times A}\varphi(t,x,a)\xi(\d t, \d x,\d a)
   &=\int_\sT \int_{\sR^d}\int_A\varphi(t,x,a)\gamma(\d a|x,t)m_t^X(\d x)\d t.
    \end{aligned}
\end{equation}
Combining  \eqref{equ:disint_diffusion} and  \eqref{eq:condition_primal_cx_diffusion} yield 
\begin{align*}
\begin{split}
&    \int_{\sR^d}  \psi(T,x)\nu(\d x)  
 - \int_{\sR^d} \psi(0,x)\rho(\d x)
\\
 &\quad  =      \int_{\sT \times \sR^d \times A}\Big(\big(\sL^{\bm \mu}  \psi\big)(t,x,a)+(\partial_t\psi)(t,x)\Big)\gamma(\d a|t,x)m_t^X(\d x)\d t,
    \quad \forall \psi \in \cW,
    \end{split}
\end{align*}
which leads to \eqref{eq:fpe}
using   the definition \eqref{eq:def_bsigmamu} of  $b^{\bm \mu, \gamma}$ and $\sigma^{\bm \mu, \gamma}$.
This proves  Item \ref{item:statemargin}. 

It remains to prove Item \ref{item:statemargin_narrowcont_cont}.
We first follow a  similar argument as 
in  \cite[Lemma 2.3]{rehmeier2022flow} to select a narrowly continuous version of $m^X$. 
Taking   $\psi(t,x)=f(t)\varphi(x)$ with $f\in C^1_c((0,T)),\varphi\in C_b^2(\sR^d)$ in \eqref{eq:fpe} gives
\begin{equation}\label{eq:weakmtphi}
\int_0^T f'(t)\bigg(\int_{\sR^d}\varphi(x)m_t^X(\d x)\bigg)\d t=-\int_0^Tf(t)\bigg(\int_{\sR^d}\sL^{\bm\mu,\gamma}\varphi(t,x)m_t^X(\d x)\bigg)\d t.
\end{equation}
Since $t\mapsto\int_{\sR^d}\sL^{\bm\mu,\gamma}\varphi(t,x)m_t^X(\d x)$ is bounded, the map $t\mapsto\int_{\sR^d}\varphi(x)m_t^X(\d x)$ belongs to Sobolev space $W^{1,1}((0,T))$ with weak derivative $t\mapsto\int_{\sR^d}\big(\sL^{\bm\mu,\gamma}\varphi\big)(t,x)m_t^X(\d x)$ $\d t$-a.s. Then, we can choose a countable set $\cF\subseteq C_b^2(\sR^d)$ dense in $C_c(\sR^d)$, and a real-valued map $(\varphi,t)\mapsto F(\varphi,t)$ on $\cF\times[0,T]$ such that for each $\varphi\in\cF$, $t\mapsto F(\varphi,t)$ is an absolutely continuous version of $t\mapsto\int_{\sR^d}\varphi(x)m_t^X(\d x)$. That is, there exists a $\cT\in[0,T]$ with Leb$(\cT^c)=0$, and 
$$
F(\varphi,t)=\int_{\sR^d}\varphi(x)m_t^X(\d x),\text{ for all }t\in\cT,\;\varphi\in\cF.
$$
Taking  $\varphi(x)=1$ in  \eqref{eq:weakmtphi} shows   that $t\mapsto m_t^X(\sR^d)$ has a weak derivative of 0, which implies that it is constant $\d t$-a.e., and hence we can set $m_t(\sR^d)$ is a constant for $t\in\cT$. Setting $\psi(t,x)=(T-t)$  in  \eqref{eq:fpe} yields $m_t^X(\sR^d)=\rho(\sR^d)=1$ for all $t\in\cT$. Fix $t\in\sT$, let $(t_n)_{n\geq 1}\subset\cT$ such that $\lim_{n\to \infty} t_n = t$. For all $\varphi\in\cF$, since $F(\varphi,\cdot)$ is absolute continuous, we have
$$
F(\varphi,t)=\lim_{n\mapto\infty} F(\varphi,t_n)=\lim_{n\mapto\infty}\int_{\sR^d}\varphi(x)m_{t_n}^X(\d x).
$$
Therefore, for all $\varphi,\varphi'\in\cF$, we have
$$
|F(\varphi,t)-F(\varphi',t)|=\bigg|\lim_{n\mapto\infty}\int_{\sR^d}(\varphi(x)-\varphi'(x))m_{t_n}^X(\d x)\bigg|\leq \|\varphi'-\varphi\|_\infty.
$$
Hence, for fixed $t\in\sT$, we can uniquely extend $F(\cdot,t)$ to a continuous linear mapping on all of $C_c(\sR^d)$ (again denoted by $F(\cdot,t)$). By the Riesz Representation Theorem, $F(\cdot,t)=\tilde m_t^X(\d x)$ for some measure $\tilde m_t^X\in\cP(\sR^d)$, that is $F(\varphi, t)=\int_{\sR^d}\varphi(x)\tilde m_t(\d x)$ for all $\varphi\in C_c(\sR^d)$ and by the uniqueness of the representation, we know that $\tilde m_t^X=m_t^X$ when $t\in\cT$. Therefore, $\tilde m_t^X$ is a version of $m_t^X$ and for all  $C_c(\sR^d)$,   $t\mapsto \int_{\sR^d}\varphi(x)\tilde m_t^X(\d x)=F(\varphi,t)$ is continuous, which implies that $\tilde m_t^X$ is vaguely continuous. Further, since $\tilde m_t^X$ are all probability measures, vaguely continuous is equivalent to narrowly continuous, we conclude the first part of the proof.

Denote by   $m_t^X(\d x)$  the narrowly continuous version of the state marginal law. To prove the desired result, we show that $\int_{\sR^d}\varphi(x)m^X_0(\d x)=\int_{\sR^d}\varphi(x)\rho(\d x)$ and $\int_{\sR^d}\varphi(x)m_T^X(\d x)=\int_{\sR^d}\varphi(x)\nu(\d x)$ for all $\varphi\in C_b^2(\sR^d)$. Fix $\varphi\in C_b^2(\sR^d)$ and set $\psi^{(n)}(t,x)=f^{(n)}(t)\varphi(x)$ where 
$n\geq 1/T$, $f^{(n)}\in C_b^1([0,T])$ such that $f^{(n)}(0)=1,\;f^{(n)}(t)=0$ for $t\in[1/n,T]$, $-2n\leq (f^{(n)})'(t)\leq 0$ for $t\in[0,T]$. Plugging
$\psi^{(n)}$
 into \eqref{eq:fpe} yields
\begin{equation}\label{tmp5}
\begin{aligned}
-&\int_{\sR^d}\varphi(x)\rho(\d x)=\int_0^{1/n}\int_{\sR^d}\Big(f^{(n)}(t)\big(\sL^{\bm\mu,\gamma}\varphi\big)(t,x)+(f^{(n)})'(t)\varphi(x)\Big)m^X_t(\d x)\d t
\\
&=\int_0^{1/n}f^{(n)}(t)\bigg(\int_{\sR^d}\big(\sL^{\bm\mu,\gamma}\varphi\big)(t,x)m^X_t(\d x)\bigg)\d t+\int_0^{1/n}(f^{(n)})'(t)\bigg(\int_{\sR^d}\varphi(x)m_0^X(\d x)\bigg)\d t
\\
&\quad +\int_0^{1/n}(f^{(n)})'(t)\bigg(\int_{\sR^d}\varphi(x)m_t^X(\d x)-\int_{\sR^d}\varphi(x)m_0^X(\d x)\bigg)\d t
\end{aligned}
\end{equation}
Recall that $t\mapsto \int_{\sR^d}\varphi(x)m_t^X(\d x)$ has a weak derivative   $t\mapsto \int_{\sR^d}\big(\sL^{\bm\mu,\gamma}\varphi\big)(t,x)m^X_t(\d x)$, with
$$
    \bigg|\int_{\sR^d}\big(\sL^{\bm\mu,\gamma}\varphi\big)(t,x)m^X_t(\d x)\bigg|\leq \big(\|b\|_\infty)+\|\sigma\|^2_\infty\big)\|\varphi\|_{C_b^2}.
$$
Since $\sup_{t\in [0,T]}|f^{(n)}(t)|\le 1$, for all $t\in[0,1/n]$,
$$
\bigg|f^{(n)}(t)\bigg(\int_{\sR^d}\big(\sL^{\bm\mu,\gamma}\varphi\big)(t,x)m^X_t(\d x)\bigg)\bigg|\leq \big(\|b\|_\infty)+\|\sigma\|^2_\infty\big)\|\varphi\|_{C_b^2}, 
$$
and 
\begin{align*}
\bigg|(f^{(n})'(t)\bigg(\int_{\sR^d}\varphi(x)m_t^X(\d x)-\int_{\sR^d}\varphi(x)m_0^X(\d x)\bigg)\bigg|&\leq 2n\times 1/n\big(\|b\|_\infty)+\|\sigma\|^2_\infty\big)\|\varphi\|_{C_b^2}
\\&=2\big(\|b\|_\infty)+\|\sigma\|^2_\infty\big)\|\varphi\|_{C_b^2},
\end{align*}
Further notice that $\int_0^{1/n}(f^{(n)})'(t)\d t=-1$, by using the dominated convergence theorem and taking $n\mapto \infty$ in \eqref{tmp5}, we get
$$
\int_{\sR^d}\varphi(x)\rho(\d x)=\int_{\sR^d}\varphi(x)m_0^X(\d x),
$$
which proves  $m_0^X=\rho$. The proof for the identity of $m_T^X=\nu$ follows a similar argument using    
$f^{(n)}$ with $f^{(n)}(T)=1$ and $f^{(n)}(t)=0$ for $t\in[0,T-1/n]$, whose details are omitted. 
\end{proof}

\begin{proof}[Proof of Proposition  \ref{prop:measure_diffusion}]

The existence of a probability measure 
$\sP\in \cP(C([0,T];\sR^d))$
such that $M^\psi$ is a martingale is guaranteed  
by  the superposition principle \cite[Theorem 2.5]{trevisan2016well}.
This along with   \cite[Proposition 4.11, Chapter 5]{karatzas1991brownian}
implies that 
 there exists a weak solution $\bm X$, defined on a   filtered probability space $(\Omega, \cF,\sF, \sP)$, to the following SDE
$$
\d X_t=b^{\mu,\gamma}(t,X_t)\d t+\sigma^{\mu,\gamma}(t,X_t)\d W_t,
$$
and satisfies $\cL^\sP(X_t)=m_t^X$ for all $t\in \sT$. 
Thus by 
Proposition \ref{prop:cont_margin}
Item \ref{item:statemargin_narrowcont_cont},
$ \cL^\sP(X_T)=m^X_T= \nu $.
Moreover,
by \eqref{equ:disint_diffusion}, 
for all
bounded measurable function $\varphi:\sT\times\sR^d\times A\to \sR$,
\begin{align*} 
  &  \int_{\sT\times\sR^d\times A}\varphi(t,x,a)\xi(\d t, \d x,\d a)
 =\int_\sT \int_{\sR^d}\int_A\varphi(t,x,a)\gamma(\d a|x,t)m_t^X(\d x)\d t
   \\
   &\quad 
   =\int_\sT \int_{\sR^d}\int_A\varphi(t,x,a)\gamma(\d a|x,t)\cL^\sP(X_t)(\d x)\d t
   =\sE^{\sP}\bigg[\int_{\sT\times A } 
   \varphi(t,X_t,a) \gamma(\d a |t,X_t) \d t\bigg].
    \end{align*}
    This implies $J^{\bm \mu}_{\rm cl}(\bm X,  \gamma)=J^{\bm \mu}_P(\nu,\xi) $
    and completes the proof.

\end{proof}

\subsection{Proofs of
Lemma \ref{lemma:L_diffusion},
Propositions \ref{prop:D_dual_equivalence} and \ref{prop:weak_duality_diffusion},
and Corollary \ref{cor:complementary_conditions_diffusion}}

  \begin{proof}[Proof of Lemma \ref{lemma:L_diffusion}]
 
  By (H.\ref{assm:diffusion_state})  and the definition of $\cW$,
  $(\psi(T,\cdot), 
  -(\partial_t \psi  + \sL^{\bm \mu}  \psi )  )\in \cY$, and hence 
  $   \cL(\nu,\xi)( \psi)$ is a well-defined real number.
  It is easy to see  
   the map $\cW\ni \psi\mapsto   \cL(\nu,\xi)( \psi)  \in \sR$ is 
   linear, and hence  
   $\cL(\nu,\xi) \in \cZ$. 
     The map $\cX\ni (\nu,\xi)\mapsto \cL(\nu,\xi)\in \cZ $ is linear by the bilinearity of $\langle \cdot,\cdot \rangle_{\cX\times \cY}$.

     To show the continuity of $\cL$, let $ (\nu^\alpha,\xi^\alpha)_{\alpha\in \Gamma} $ be a net indexed by a directed set $\Gamma$ which converges to some $(\nu,\xi)\in \cX$ in the $\sigma$-topology on $\cX$. We claim that the net 
$( \cL(\nu^\alpha,\xi^\alpha))_{\alpha\in \Gamma} $ converges to $\cL(\nu,\xi)$ in the $\sigma$-topology  on $\cZ$.
To see it, observe that 
$\cY$ and $\cW$ are the continuous  dual spaces of $\cX$  and $\cZ$ in the $\sigma$-topology, respectively; see
 \cite[Proposition 1, p.~37]{anderson1987linear}. Hence 
the  convergence of $((\nu^\alpha,\xi^\alpha))_{\alpha\in \Gamma} \in \cX$
in the $\sigma$-topology on $\cX$
 implies 
for all $(u,\phi)\in \cY$, the net
$(\langle (\nu^\alpha,\xi^\alpha), (u,\phi) \rangle_{\cX\times \cY})_ {\alpha\in \Gamma}$ converges
to $\langle  (\nu,\xi), (u,\phi) \rangle_{\cX\times \cY}$ 
 in $\sR$. 
Moreover, to show
the desired convergence of $( \cL(\nu^\alpha,\xi^\alpha))_{\alpha\in \Gamma} $ in $\cZ$, it suffices to prove that 
for all $\psi\in W$, 
the net $ ( \langle  \cL(\nu^\alpha,\xi^\alpha), \psi  \rangle_{\cZ\times \cW})_{\alpha\in \Gamma}$ converges to 
$\langle   \cL(\nu,\xi), \psi \rangle_{\cZ\times \cW} $  in $\sR$,
which is  equivalent to show the convergence of  
$\left(\left\langle 
 (\nu^\alpha,\xi^\alpha), 
 \big(\psi(T,\cdot), 
  -(\partial_t \psi  + \sL^{\bm \mu}  \psi )   \big)\right\rangle_{\cX\times \cY}\right)_{ \alpha\in \Gamma} $
to $ \left\langle 
 (\nu,\xi), \big(\psi(T,\cdot), 
  -(\partial_t \psi  + \sL^{\bm \mu}  \psi )   \big) \right\rangle_{\cX\times \cY}$, 
due to   the definitions of the bilinear form  $ \langle  \cdot, \cdot  \rangle_{\cZ\times \cW}$ and the map $\cL$. 
Note that for all $\psi\in W$, 
by (H.\ref{assm:diffusion_state}) , $\big(\psi(T,\cdot), 
  -(\partial_t \psi  + \sL^{\bm \mu}  \psi )   \big) \in \cY$, and hence the desired convergence follows from the 
convergence of $(\nu^\alpha,\xi^\alpha)_{\alpha\in \Gamma}$ in the  $\sigma$-topology on $\cX$. 
\end{proof}

 \begin{proof}[Proof of Proposition \ref{prop:D_dual_equivalence}]
 It suffices to prove that 
 the    definitions 
\eqref{eq:D_dual_diffusion_abstract}
and \eqref{eq:D_dual_diffusion}
of  
$\cD_{P^*}(\bm \mu )$ are equivalent. 
That is,
 for each $\psi\in \cW$, 
 $\psi$ satisfies \eqref{eq:D_dual_diffusion_abstract} if and only if 
 $\psi $ satisfies \eqref{eq:D_dual_diffusion}.
 
     Fix $\psi\in \cW$. 
    For all $(\nu,\xi)\in \cX_+$,
     by the definition of $\cL^*$ and $\cL$,
\begin{align}
\label{eq:identity_L_L^*}
\begin{split}
  &   \langle
 (\nu,\xi), 
 \big((g(\cdot,\mu_T),  f(\cdot,\cdot,\cdot,\mu_\cdot))- \cL^*(\psi)
 \big)
 \rangle_{\cX\times \cY}
 \\
 &  
 =
   \langle
 (\nu,\xi), 
  (g(\cdot,\mu_T),  f(\cdot,\cdot,\cdot,\mu_\cdot))\rangle_{\cX\times \cY} 
  - \langle
 (\nu,\xi), 
   \cL^*(\psi)
 \rangle_{\cX\times \cY}
 \\
 &  
 =
   \langle
 (\nu,\xi), 
  (g(\cdot,\mu_T)-\psi(T,\cdot),  \partial_t \psi + \sL^{\bm \mu}  \psi 
+ f(\cdot,\cdot,\cdot,\mu_\cdot))\rangle_{\cX\times \cY}.
 \end{split}
\end{align}

     Suppose that $\psi $ satisfies \eqref{eq:D_dual_diffusion}. 
By \eqref{eq:identity_L_L^*}, it holds for all
 $(\nu,\xi)\in \cX_+$,
\begin{align*}
\begin{split}
  &   \langle
 (\nu,\xi), 
 \big((g(\cdot,\mu_T),  f(\cdot,\cdot,\cdot,\mu_\cdot))- \cL^*(\psi)
 \big)
 \rangle_{\cX\times \cY}
 \\
& =
   \langle
 (\nu,\xi), 
  (g(\cdot,\mu_T)-\psi(T,\cdot),  \partial_t \psi + \sL^{\bm \mu}  \psi 
+ f(\cdot,\cdot,\cdot,\mu_\cdot))\rangle_{\cX\times \cY} 
 \ge 0,
 \end{split}
\end{align*}
where the last inequality used 
$   g(\cdot,\mu_T)\ge \psi(T,\cdot)$ and $ \partial_t \psi + \sL^{\bm \mu}  \psi 
+ f(\cdot,\cdot,\cdot,\mu_\cdot)\ge 0 $
since $\psi$ satisfies \eqref{eq:D_dual_diffusion}.
This proves $\psi$ satisfies \eqref{eq:D_dual_diffusion_abstract}.

Now suppose that $\psi$ satisfies \eqref{eq:D_dual_diffusion_abstract}.
From \eqref{eq:identity_L_L^*}, it holds for all
  $(\nu,\xi)\in \cX_+$ that
  $$ \langle
 (\nu,\xi), 
  (g(\cdot,\mu_T)-\psi(T,\cdot),  \partial_t \psi   + \sL^{\bm \mu}  \psi 
+ f(\cdot,\cdot,\cdot,\mu_\cdot))\rangle_{\cX\times \cY} 
 \ge 0.$$
This implies that 
 for all $x\in \sR^d$,
 $g(x,\mu_T)\ge \psi(T,x)$
 and for all  $\xi \in \cM_+(\sT\times \sR^d\times  A)$,
 \begin{equation}
 \label{eq:h_i_test_measure}
 \int_{\sT \times A} 
 h(t,x, a)\xi(\d t,\d x, \d a)\ge 0,
 \quad \textnormal{with 
 $
 h(t,x, a)\coloneqq  \partial_t \psi (t) + (\sL^{\bm \mu}  \psi)(t,x,a)
+ f(t,x,a,\mu_t) $}.    
 \end{equation}
 Suppose that there exists 
 $(t_0, x_0, a_0)\in \sT\times \sR^d\times  A$ such that 
 $ h (t_0,x_0,  a_0)<0$. 
 Then 
 $$\int_{\sT\times \sR^d \times A} 
 h(t, x, a)\delta_{(t_0, x_0,  a_0)} (\d t,\d x, \d a)=h(t_0,x_0,  a_0)< 0,
 $$ 
which contradicts to   \eqref{eq:h_i_test_measure}.
Consequently,  
$ h(t,x,  a)\ge 0$ for all $(t,x,  a)\in \sT\times \sR^d\times A$.
This shows that  
 $\psi$ satisfies \eqref{eq:D_dual_diffusion} and finishes the proof.
 \end{proof}

\begin{proof}[Proof of Proposition \ref{prop:weak_duality_diffusion}]
Note that 
  if $\cD_{ P}(\bm \mu)=\emptyset$,
  then $\inf_{(\nu,\xi)\in \cD_{ P}(\bm \mu)} J_{{P}}^{\bm \mu}(\nu,\xi)=\infty$,
  and 
  if $D_{P^*}(\bm \mu)=\emptyset$,
  then $\sup_{\psi\in \mathcal D_{P^*}(\bm \mu)} J_{{P}^*}^{\bm \mu} (\psi)=-\infty$.
  Hence without loss of generality, we can assume both 
$D_{P^*}(\bm \mu)$
and $\cD_{ P}(\bm \mu)$
are non-empty. 
In this case,
the desired inequality
$\sup_{\psi\in \mathcal D_{P^*}(\bm \mu)} J_{{P}^*}^{\bm \mu} (\psi)\le \inf_{(\nu,\xi)\in \cD_{ P}(\bm \mu)} J_{{P}}^{\bm \mu}(\nu,\xi)
    $
follows from   
  the weak duality result for general infinite-dimensional LP problems \cite[Theorem 3.1, p.~39]{anderson1987linear}.
\end{proof}

\begin{proof}[Proof of Corollary \ref{cor:complementary_conditions_diffusion}]

 To show the    equivalence between \eqref{eq:NE_primal_dual_value_diffusion} and \eqref{eq:complementary_diffusion},
 observe that using $\psi^*\in \cW$ and  \eqref{eq:NE_primal_constraint_diffusion}, 
\eqref{eq:NE_primal_dual_value_diffusion} is equivalent to 
\begin{align*}
&\int_{\sR^d} (g(x,\mu^*_T)- \psi(T,x))\mu^*_T(\d x)  
 \\
& \quad + \int_{\sT \times \sR^d \times A}\Big(\big(\sL^{\bm \mu^*}  \psi\big)(t,x,a)+(\partial_t\psi)(t,x)+f(t,x,a,\mu^*_t)\Big)\xi^*(\d t, \d x,  \d a)=0,
\end{align*}
which is equivalent to \eqref{eq:complementary_diffusion} 
under   Conditions    \eqref{eq:NE_dual_constraint_1_diffusion} and \eqref{eq:NE_dual_constraint_2_diffusion}.
\end{proof}

\subsection{Proofs of Theorems  
\ref{thm:primal_dual_NE_diffusion_necessary}
and \ref{thm:strong_duality_diffusion} 
}

\begin{proof}[Proof of Theorem \ref{thm:primal_dual_NE_diffusion_necessary}]
Since  $(\bm \mu^*, \bm X^*, \gamma^*)$ 
    is an NE, $J^{\bm \mu^*}_{\rm cl}(\bm X^*,\gamma^*)=\inf_{(\bm X, \gamma)\in \cA_{\rm cl}(\bm \mu^*)} J^{\bm \mu^*}_{\rm cl}(\bm X,\gamma)$ and $\mu_T^*=\mathscr{L}^\sP (X^*_T)$.
By Proposition \ref{prop:cl_measure}, we have $(\mu_T^*,\xi^*)\in\cD_P(\bm \mu^*)$ and hence Condition \eqref{eq:NE_primal_constraint_diffusion} holds. Moreover,  $$J^{\bm \mu^*}_P(\mu_T^*,\xi^*)=J^{\bm \mu^*}_{\rm cl}(\bm X^*,  \gamma^*)=\inf_{(\bm X, \gamma)\in \cA_{\rm cl}(\bm \mu^*)} J^{\bm \mu^*}_{\rm cl}(\bm X,\gamma)=\inf_{(\nu,\xi)\in \mathcal{D}_P(\bm \mu^*) }J^{\bm \mu^*}_P(\nu,\xi),$$ 
where the last equality holds due to Theorem \ref{thm:primal_diffusion}.

Since $\bm \mu^*$ ensures strong duality, $\argmax_{\psi\in  {\cD_{P^*}}(\bm \mu^*) }   {J}^{\bm \mu^*}_{P^*}( \psi)$ is nonempty
and for any $\psi^*\in   {\cD_{P^*}}(\bm \mu^*) $ 
$$
J_{{P}^*}^{\bm \mu} (\psi^*)=
\sup_{\psi\in \mathcal D_{P^*}(\bm \mu)} J_{{P}^*}^{\bm \mu} (\psi )
=\inf_{(\nu,\xi)\in \cD_{ P}(\bm \mu)}J_{{P}}^{\bm \mu} (\nu,\xi ) 
=J^{\bm \mu^*}_P(\mu_T^*,\xi^*),
$$
which verifies Condition \eqref{eq:NE_primal_dual_value_diffusion}.  Finally, Conditions \eqref{eq:NE_dual_constraint_1_diffusion} and \eqref{eq:NE_dual_constraint_2_diffusion} are guaranteed by the feasibility condition $\psi^*\in\cD_{P^*}(\bm \mu^*)$, and Condition \eqref{eq:NE_consistency_constraint_diffusion} is ensured by the definition of $\xi^*$. 
\end{proof}

\begin{proof}[Proof of Theorem \ref{thm:strong_duality_diffusion}]
We prove that there exists $(\bar \nu,\bar \xi)\in \cD_{P}(\bm \mu)$ such that $J_{{P}^*}^{\bm \mu} (V)=J_{{P}}^{\bm \mu}(\bar \nu,\bar\xi)$. By Assumption (H.\ref{assum:duality}\ref{item:pde_regularity}), there exists $\phi(t,x)$ such that 
    $$\phi (t,x) \in \arg\min_{a\in A} \big((\sL^{\bm \mu} V)(t, x, a)+f(t,x,a,\mu_t) \big).$$  
    Define $\gamma \in \cP(A|\sT\times \sR^d) $ by
    $
    \gamma(\d a|t,x):=\delta_{\phi(t,x)}(\d a).$
    By Assumption  (H.\ref{assum:duality}\ref{item:feasible}), there exists  $(\bm X,\gamma)\in \cA_{\rm cl}(\bm \mu)$ defined on some filtered probability space $(\Omega, \cF,\sF,\sP)$. Define $\bar\nu\in \cM_+(\sR^d)$ and  $\bar\xi\in \cM_+(\sT\times \sR^d\times  A)$ such that for all $F_1\in \cB(\sR^d)$, and  $F_2 \in \cB(\sT\times \sR^d\times A)$, 
    \begin{align}\label{eq:barnuxi}
    \begin{split}
    \bar\nu(F_1)&\coloneqq\sP(X_T\in F_1),
    \\ \bar\xi(F_2)&\coloneqq  \sE^{\sP}\bigg[\int_{\sT\times A } \mathds{1}_{\{(t,X_t,a)\in F_2\}} \gamma(\d a |t,X_t) \d t\bigg]=\sE^{\sP}\bigg[\int_{\sT} \mathds{1}_{\{(t,X_t,\phi(t,X_t))\in F_2\}}  \d t\bigg].
    \end{split}
    \end{align} 
    By Proposition \ref{prop:cl_measure}, $(\bar\nu,\bar\xi)\in\cD_{P}(\bm \mu)$. Then,
    \begin{equation*}
        \begin{aligned}
        J_{{P}^*}^{\bm \mu} (V)&=\int_{\sR^d}V(0,x)\rho(\d x)
        \\&=\int_{\sR^d}  V(T,x)\bar\nu(\d x) -\int_{\sT \times \sR^d \times A}\Big(\big(\sL^{\bm \mu}  V\big)(t,x,a)+\partial_tV(t,x)\Big)\bar\xi(\d t, \d x,  \d a)
        \\
        &=\int_{\sR^d}  g(x,\mu_T)\bar\nu(\d x) -\sE^\sP\bigg[\int_{\sT}\Big(\big(\sL^{\bm \mu}  V\big)\big(t,X_t,\phi(t,X_t)\big)+\partial_tV(t,X_t)\Big)\d t\bigg]
        \\
        &=\int_{\sR^d}  g(x,\mu_T)\bar\nu(\d x)-\sE^\sP\bigg[\int_{\sT}f(t,X_t,\phi(t,X_t),\mu_t)\d t\bigg]
        \\
        &=\int_{\sR^d}  g(x,\mu_T)\bar\nu(\d x)-\int_{\sT \times \sR^d \times A}f(t,x,a,\mu_t)\bar\xi(\d t, \d x,  \d a)=J_{{P}}^{\bm \mu} (\bar\nu,\bar\xi),
        \end{aligned}
    \end{equation*}
    where the first equality holds by \eqref{eq:dual_diffusion}, the second equality holds by \eqref{eq:condition_primal_cx_diffusion}, the third equality holds by Assumption (H.\ref{assum:duality}\ref{item:pde_regularity}) and \eqref{eq:barnuxi}, the fourth equality holds by Assumption (H.\ref{assum:duality}\ref{item:pde_regularity}), the fifth equality holds by \eqref{eq:barnuxi} and the last one holds by \eqref{eq:object_primal_cX_diffusion}. This along with the weak duality implies that $(\bar\nu,\bar\xi)$ and $V$ are the optimal solutions to \eqref{Linear_programming_diffusion} and \eqref{eq:dual_diffusion}, respectively, and therefore proves the desired strong duality result.
\end{proof}

\subsection{Proof of 
Proposition \ref{prop:pde_regularity}}

 We  first establish    a   regularity result for semilinear   PDEs in  H\"older spaces. The result  generalizes existing  regularity results for PDEs with smooth/Lipschitz continuous coefficients (see e.g., \cite{ma1994solving,delarue2006forward}) to  PDEs   with  H\"{o}lder continuous coefficients. 
In the sequel, 
for each non-integer  $\ell>0$, 
we denote by 
$C^{\ell/2,\ell}(\sT\times \sR^d)$ the parabolic H\"older space  
equipped with the 
H\"older norm $|\cdot|_{\ell/2,\ell}$
as 
introduced in \cite[Section 8.5]{krylov1996lectures},
and by $C^\ell(\sR^d)$
the usual H\"older space of time-independent functions
as 
  in \cite[Section 3.1]{krylov1996lectures}.

 \begin{Proposition}
 \label{prop:general_semilinear_regularity}
     Let $\delta \in (0,1)$, $\kappa>0$, and    $a^{ij}\in C^{\delta,\delta/2}(\sT\times \sR^d)$, $1\le i,j\le d$, be  such that 
     $a^{ij}=a^{ji}$ for all $i,j$, and $\sum_{i,j=1}^d a^{ij}(t,x)v_iv_j\ge \kappa|v|^2$ for all $v=(v_i)_{i=1}^d\in \sR^d$ and $(t,x)\in \sT\times \sR^d$.
     Let $C_H\ge 0$, $H:\sT\times \sR^d\times \sR^d \to \sR$ be such that for all $(t,x,p), (t',x',p')\in \sT\times \sR^d\times \sR^d$, $|H(t,x,0)|\le C_H$, and
     \begin{equation}
     \label{eq:H_regularity}
     |H(t,x,p)-H(t',x',p')|\le C_H\big((|t-t'|^{\delta/2}+|x-x'|^\delta)(1+\max\{|p|,|p'|\})+ |p-p'|\big),
     \end{equation} and let   $g\in C^{2+\delta}(\sR^d)$.
     Then there exists a unique   $V\in C^{1+\delta/2, 2+\delta}(\sT\times \sR^d) $ such that for all $(t,x)\in \sT\times \sR^d$: 
     \begin{equation}
     \label{eq:semilinear_pde}
     \partial_t V(t,x)+ \sum_{i,j=1}^d a^{ij}(t,x)(\partial_{x_ix_j} V)(t,x)   +H(t,x,(\nabla_x V)(t,x))=0,
     \quad V(T,x)=g(x).
     \end{equation}
     
 \end{Proposition}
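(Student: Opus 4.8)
The plan is to regard \eqref{eq:semilinear_pde} as a semilinear perturbation of a linear uniformly parabolic Cauchy problem and to combine the classical Schauder theory for the latter (as in \cite{krylov1996lectures}) with an a priori $C^{1+\delta/2,2+\delta}$ estimate, the estimate being the real content and the existence then following by approximation. First I would reverse time, $\tilde V(t,x)=V(T-t,x)$, to put \eqref{eq:semilinear_pde} in the standard forward form $\partial_t\tilde V=\sum_{i,j}\tilde a^{ij}\partial_{x_ix_j}\tilde V+\tilde H(t,x,\nabla_x\tilde V)$ with Cauchy datum $g$; the time-reversed data keep the same ellipticity, the same $C^{\delta/2,\delta}$ bounds and the structure condition \eqref{eq:H_regularity}. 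Equivalently one may keep the backward form throughout, since the linear Schauder estimate for terminal-value problems follows from the Cauchy case. The linear ingredient I would record is: for $f\in C^{\delta/2,\delta}(\sT\times\sR^d)$ and $g\in C^{2+\delta}(\sR^d)$ the linear problem $\partial_t w+\sum\tilde a^{ij}\partial_{x_ix_j}w+f=0$, $w(T,\cdot)=g$, has a unique solution $w\in C^{1+\delta/2,2+\delta}(\sT\times\sR^d)$ with the global estimate $|w|_{1+\delta/2,2+\delta}\le C\big(|f|_{\delta/2,\delta}+|g|_{2+\delta}\big)$, where $C$ depends only on $d$, $\delta$, $T$, $\kappa$ and $\sup_{ij}|\tilde a^{ij}|_{\delta/2,\delta}$.

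Next I would derive the two a priori bounds. For the sup bound, the point is that the at-most-linear gradient dependence can be absorbed into a bounded drift: set $b(t,x):=\big(H(t,x,\nabla_x V(t,x))-H(t,x,0)\big)\,\nabla_x V(t,x)/|\nabla_x V(t,x)|^2$ (and $0$ where $\nabla_x V$ vanishes), which by the $|p-p'|$ term of \eqref{eq:H_regularity} is bounded measurable with $|b|\le C_H$; then any classical solution of \eqref{eq:semilinear_pde} satisfies the linear equation $\partial_t V+\sum a^{ij}\partial_{x_ix_j}V+b\cdot\nabla_x V+H(\cdot,\cdot,0)=0$, and applying It\^o's formula to $V(s,X_s)$ along $\d X_s=b(s,X_s)\,\d s+\sqrt{2a(s,X_s)}\,\d W_s$ — well posed since $a$ is bounded, continuous and uniformly elliptic and $b$ is bounded — yields the representation $V(t,x)=\sE^{t,x}\big[g(X_T)+\int_t^T H(s,X_s,0)\,\d s\big]$, hence $\|V\|_{L^\infty}\le\|g\|_{L^\infty}+TC_H$, independently of $\nabla_x V$. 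For the Hölder bound, the Nemytskii map $V\mapsto H(\cdot,\cdot,\nabla_x V)$ sends $C^{1+\delta/2,2+\delta}$ into $C^{\delta/2,\delta}$: when $|V|_{1+\delta/2,2+\delta}<\infty$, $\nabla_x V$ is bounded and lies in $C^{\delta/2,\delta}$, and \eqref{eq:H_regularity} (linear growth in $p$, $\delta$-H\"older in $(t,x)$, Lipschitz in $p$) gives $|H(\cdot,\cdot,\nabla_x V)|_{\delta/2,\delta}\le C_H\big(1+\|\nabla_x V\|_{L^\infty}+[\nabla_x V]_{\delta/2,\delta}\big)$. Composing with the linear estimate and using the parabolic interpolation inequality $|\nabla_x V|_{\delta/2,\delta}\le\varepsilon\,|V|_{1+\delta/2,2+\delta}+C_\varepsilon\|V\|_{L^\infty}$ to absorb the top-order term, I obtain $|V|_{1+\delta/2,2+\delta}\le C\big(1+\|V\|_{L^\infty}+|g|_{2+\delta}\big)$, which together with the sup bound gives a uniform a priori estimate $|V|_{1+\delta/2,2+\delta}\le M$ depending only on the data.

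For existence I would approximate: mollify $a^{ij}$ to smooth $a^{ij}_n$ keeping ellipticity $\ge\kappa/2$ and $\sup_{ij}|a^{ij}_n|_{\delta/2,\delta}$ bounded, and $H$ to smooth $H_n\to H$ locally uniformly satisfying \eqref{eq:H_regularity} with constant $\le 2C_H$. The smooth semilinear Cauchy problems are globally solvable in $C^{1+\delta/2,2+\delta}$ by classical theory (the linear-growth-in-gradient condition is precisely what prevents finite-time gradient blow-up), and the a priori bounds of the previous paragraph apply to them with constants independent of $n$. Hence $(V_n)$ is bounded in $C^{1+\delta/2,2+\delta}$; by Arzel\`a--Ascoli on compact sets and a diagonal argument a subsequence converges in $C^{1+\delta'/2,2+\delta'}_{\mathrm{loc}}$ for any $\delta'<\delta$ to a limit $V$, which solves \eqref{eq:semilinear_pde} and, by lower semicontinuity of the H\"older norm under this convergence, satisfies $|V|_{1+\delta/2,2+\delta}\le M$, so $V\in C^{1+\delta/2,2+\delta}(\sT\times\sR^d)$. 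Uniqueness is immediate: the difference of two solutions solves a linear uniformly parabolic equation with bounded measurable coefficients and zero Cauchy datum, and therefore vanishes by the same maximum-principle (It\^o representation) argument used for the sup bound. Reversing time yields the stated result. Finally, a remark on the regularity needed of the minimizing selection $\phi$ referred to elsewhere is not needed here, since the statement only concerns $V$.

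The step I expect to be the main obstacle is the a priori $C^{1+\delta/2,2+\delta}$ estimate under the merely H\"older regularity of $a^{ij}$: one cannot differentiate the equation, so no Bernstein-type gradient estimate is available and everything must be squeezed out of the linear Schauder estimate together with the precise coupling in \eqref{eq:H_regularity} and the interpolation absorption, and one must be careful that all constants depend only on $\kappa$, the $C^{\delta/2,\delta}$-norms of the (approximate) $a^{ij}$, $C_H$, $|g|_{2+\delta}$, $d$ and $T$ — in particular that the Nemytskii map genuinely lands in $C^{\delta/2,\delta}$ on the \emph{unbounded} domain $\sT\times\sR^d$ (which is where boundedness of $\nabla_x V$ is used) and that the approximation preserves all these constants.
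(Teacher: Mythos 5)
Your proposal is correct and follows essentially the same route as the paper's proof: a uniform $C^{1+\delta/2,2+\delta}$ a priori bound obtained from the linear Schauder estimate combined with parabolic interpolation to absorb the Nemytskii term, existence via mollified approximating semilinear problems plus Arzel\`a--Ascoli and a diagonal argument, and uniqueness via the maximum principle for the linearized equation. The only differences are cosmetic: you obtain the sup bound through a Feynman--Kac/It\^o representation with the gradient dependence absorbed into a bounded drift rather than quoting the maximum principle directly, and you omit the paper's exponential rescaling $V=e^{\lambda(T-t)}\bar V$, which there serves only as a technical convenience when citing solvability of the mollified problems.
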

 
\begin{proof}
It suffices to prove the existence of a classical solution to \eqref{eq:semilinear_pde},
as the uniqueness of solutions follows directly from the    maximum principle for linear parabolic PDEs \cite[Theorem 8.1.4]{krylov1996lectures}.
 We now   construct a solution to  
\eqref{eq:semilinear_pde} by considering  a sequence of PDEs with mollified coefficients. 
    Fix $\lambda>0$,
    and define 
    $H^\lambda :\sT\times \sR^d\times \sR^d \to \sR$
    such that 
    $H^\lambda (t,x,p)\coloneqq
    H(t,x,e^{\lambda(T-t)}p)$.
    For each $\eps>0$,
        let 
   $\eta_\eps:\sR^d\to \sR$
    be a  mollifier 
    given by $\eta_\eps(x)=\frac{1}{\eps^d}\eta\left(\frac{x}{\eps}\right)$, where 
      $\eta:\sR^d\to [0,\infty)  $   
      is a smooth function satisfying $\eta(x)=0$ for all $|x|\ge 1$ and $\int_{\sR^d}\eta (x)\d x=1$,
     and  
      for all $i,j=1,\ldots, d$,
        define 
      $a^{ij}_\eps:\sT\times \sR^d\to \sR$ such that 
      $a^{ij}_\eps (t, x) =\int_{\sR^d}\eta_\eps(y) a^{ij}(t, x-y)\d y$.
Similarly, let $(\rho_\eps)_{\eps>0} $ be a sequence of mollifiers
on $\sR^d\times \sR^d$, 
  and  define 
      $H^\lambda_\eps:\sT\times \sR^d\times \sR^d \to \sR$ such that 
        $H^\lambda_\eps (t, x,p) =\int_{\sR^d\times \sR^d}\eta_\eps(y,q) H^\lambda(t, x-y,p-q)\d y \d q$.
        Note that both $H^\lambda$ and $H^\lambda_\eps$
        satisfy the estimate \eqref{eq:H_regularity}.
 To simplify the notation, 
we denote by $C\ge 0$
    a generic constant, which is independent of $\eps $ and may take a different value at each occurrence.

Now for each $\eps>0$, consider the following PDE:
        for all $(t,x)\in \sT\times \sR^d$,
          \begin{align}
          \begin{split}
     \label{eq:semilinear_pde_mollified}
     \partial_t V(t,x)+ \sum_{i,j=1}^d a^{ij}_\eps (t,x)(\partial_{x_ix_j} V)(t,x) -\lambda V(t,x)  +H^\lambda_\eps (t,x,(\nabla_x V)(t,x))=0,
     \\ V(t,x)=g(x),
     \end{split}
     \end{align}
    which 
     admits a unique solution $V_\eps \in C^{1,2}_b(\sT\times \sR^d)$, due to 
     standard  regularity results for parabolic PDEs with smooth coefficients  
     (see e.g., \cite[Proposition 3.3]{ma1994solving}).
We now show  that $(V_\eps)_{\eps>0}$ is uniformly bounded in the norm $|\cdot|_{1+\delta/2, 2+\delta}$.
Observe that 
$\sum_{i,j=1}^d a^{ij}_\eps(t,x)v_iv_j\ge \kappa|v|^2$ for all $v=(v_i)_{i=1}^d\in \sR^d$
and 
there exists ${C}\ge 0$
such that for all $\eps>0$,
 $ |a^{ij}_\eps|_{\delta/2,\delta}\le C$, 
$|H^\lambda_\eps(\cdot,\cdot, p)|_{\delta/2,\delta}\le C(1+|p|)$
for all $p\in \sR^d$,
and 
$ |\partial_p H^\lambda_\eps|_0\le C$,
where 
$|\cdot|_0$ denotes the sup-norm. 
This along 
the interpolation inequality given in \cite[Exercise 8.8.2]{krylov1996lectures}.
implies that  
\begin{align*}
 |H_\eps (\cdot,\cdot,(\nabla_x V_\eps)(\cdot,\cdot))|_{\delta/2,\delta}
 &\le C(1+|\nabla_x V_\eps|_0
 +|\nabla_x V_\eps|_{\delta/2, \delta})
 \\
 &\le 
 C\left(1+ |V_\eps|^{\frac{1}{2+\delta}}_{1+\delta/2,2+\delta}|V_\eps|^{\frac{1+\delta}{2+\delta}}_0+
 |V_\eps|^{\frac{1+\delta}{2+\delta}}_{1+\delta/2,2+\delta}|V_\eps|^{\frac{1}{2+\delta}}_0\right).
\end{align*} 
Then
by the a-priori estimate of linear parabolic PDE \cite[Theorem 9.2.3]{krylov1996lectures}, 
\begin{align*}
|V_\eps|_{1+\delta/2,2+\delta}
&\le C\left(|H_\eps (\cdot,\cdot,(\nabla_x V_\eps)(\cdot,\cdot))|_{\delta/2,\delta}
    + |g|_{2+\delta}\right) 
 \\
 &
    \le 
     C\left(1+ |V_\eps|^{\frac{1}{2+\delta}}_{1+\delta/2,2+\delta}|V_\eps|^{\frac{1+\delta}{2+\delta}}_0+
 |V_\eps|^{\frac{1+\delta}{2+\delta}}_{1+\delta/2,2+\delta}|V_\eps|^{\frac{1}{2+\delta}}_0\right), 
\end{align*}
which along  with  Young's inequality
shows that
for all $\tau>0$,
$$
|V_\eps|_{1+\delta/2,2+\delta}\le  
    C\left(1+
    \tau
|V_\eps|_{1+\delta/2,2+\delta} 
+\tau^{-\frac{2+\delta}{1+\delta}} |V_\eps|_{0} 
  +  \tau |V_\eps|_{1+\delta/2,2+\delta} 
    + 
    \tau^{-(1+\delta)} 
    |V_\eps|_{0} 
    \right). 
$$
Taking a sufficiently small $\tau$ yields  
$|V_\eps|_{1+\delta/2,2+\delta}\le C (1+|V_\eps|_{0})
\le C,
$
where the last inequality follows from 
$|V_\eps|_{0}\le C$ due to   the maximum principle \cite[Theorem 8.1.4]{krylov1996lectures}.
This proves the 
  uniform boundedness of 
$(V_\eps)_{\eps>0}$   in the norm $|\cdot|_{1+\delta/2, 2+\delta}$.

By the  
Arzel\'a–Ascoli theorem and a diagonalization argument, 
there exists $\bar V\in C^{1+\delta/2,2+\delta}(\sT\times \sR^d)$
and a subsequence of 
$(V_\eps)_{\eps>0}$ (denoted by $(V_{\eps_n})_{n\in\sN}$)
such that for any given   
 compact subset of $\sT\times \sR^d$,
$(V_{\eps_n})_{n\in\sN}$
and all their derivatives
converge to $\bar V$ and its derivatives uniformly.
Moreover, due to the H\"{o}lder continuity of $a_{ij}$ and $H^\lambda$,
 $\lim_{n\in \sN }|a^{ij}_{\eps_n}-a^{ij}|_0=0$
 and $\lim_{n\in \sN}|H_{\eps_n}-H|_0=0$. 
Using \eqref{eq:semilinear_pde_mollified} and passing $\eps_n\to 0$
imply that 
$\bar V$ satisfies for all $(t,x)\in \sT\times \sR^d$,
     $$ \partial_t \bar V(t,x)+ \sum_{i,j=1}^d a^{ij} (t,x)(\partial_{x_ix_j} \bar  V)(t,x) -\lambda \bar  V(t,x)  +H^\lambda (t,x,(\nabla_x \bar  V)(t,x))=0,
     \quad \bar  V(t,x)=g(x).
     $$
     Then by the definition of $H^\lambda$,
one can   verify that 
the function 
$(t,x)\mapsto V(t,x)\coloneqq
e^{\lambda (T-t)}\bar V(t,x)$
is in $C^{1+\delta/2,2+\delta}(\sT\times \sR^d)$
and 
satisfies  \eqref{eq:semilinear_pde}.
This proves the desired result.  
\end{proof}

The following lemma proves the time regularity of a   flow in the class $\cU_\rho$ defined in \eqref{eq:U_rho}. 

\begin{Lemma}
\label{lemma:regularity_mu}
   Let
   $\rho\in \cP(\sR^d)$
and $\bm\mu \in \cU_\rho $.
For all 
$\phi\in C^{1,2}_b(\sT\times \sR^d)$,
there   exists $C\ge 0$
such that 
$
\left|\int_{\sR^d} \phi(t,y)\mu_t -
 \int_{\sR^d} \phi(s,y)\mu_s \right|
 \le C|t-s|$  for all $t,s\in \sT $.
 Assume further that   $\rho \in \cP_p(\sR^d)$
for some $p\ge 1$.
Then there exists $C\ge 0$
such that 
$W_p(\mu_t,\mu_s)\le C|t-s|^{1/2}$
for all $t,s\in \sT$. 
 
\end{Lemma}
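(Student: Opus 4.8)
The plan is to use the explicit semimartingale representation of the state process underlying any admissible flow $\bm\mu\in\cU_\rho$: Itô's formula handles the first assertion, while a direct coupling estimate combined with the Burkholder--Davis--Gundy inequality handles the second. Throughout, write $\bm\mu=(\mu_t)_{t\in\sT}$ with $\mu_t=\cL^\sP(X_t)$ and $X_t=X_0+\int_0^t\bar b_s\,\d s+\int_0^t\bar\sigma_s\,\d W_s$ as in \eqref{eq:admissible_flow}, where $\bar b,\bar\sigma$ are bounded and $\sF$-progressively measurable.

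For the first claim, fix $\phi\in C^{1,2}_b(\sT\times\sR^d)$ and apply Itô's formula to $r\mapsto\phi(r,X_r)$ on $[s,t]$, giving
\[
\phi(t,X_t)-\phi(s,X_s)=\int_s^t\Big(\partial_r\phi(r,X_r)+\bar b_r^\top\nabla_x\phi(r,X_r)+\tfrac12\tr\big(\bar\sigma_r\bar\sigma_r^\top(\operatorname{Hess}_x\phi)(r,X_r)\big)\Big)\d r+\int_s^t(\nabla_x\phi)(r,X_r)^\top\bar\sigma_r\,\d W_r.
\]
Since $\nabla_x\phi$ and $\bar\sigma$ are bounded, the stochastic integral is a genuine martingale, so taking $\sE^\sP$ annihilates it and yields
\[
\int_{\sR^d}\phi(t,y)\mu_t(\d y)-\int_{\sR^d}\phi(s,y)\mu_s(\d y)=\sE^\sP\!\int_s^t\Big(\partial_r\phi+\bar b_r^\top\nabla_x\phi+\tfrac12\tr(\bar\sigma_r\bar\sigma_r^\top\operatorname{Hess}_x\phi)\Big)(r,X_r)\,\d r.
\]
The integrand is bounded in absolute value by $C:=\|\partial_t\phi\|_\infty+\|\bar b\|_\infty\|\nabla_x\phi\|_\infty+\tfrac12\|\bar\sigma\|_\infty^2\|\operatorname{Hess}_x\phi\|_\infty$, which is finite and independent of $(t,s)$, so the left-hand side is at most $C|t-s|$.

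For the second claim, assume additionally $\rho\in\cP_p(\sR^d)$. Boundedness of $\bar b,\bar\sigma$ together with $\sE^\sP|X_0|^p<\infty$ gives $\sup_{t\in\sT}\sE^\sP|X_t|^p<\infty$, so $\mu_t\in\cP_p(\sR^d)$ and $W_p(\mu_t,\mu_s)$ is finite. Since $(X_t,X_s)$ is a coupling of $(\mu_t,\mu_s)$, the triangle inequality in $L^p(\sP)$ gives
\[
W_p(\mu_t,\mu_s)\le\big(\sE^\sP|X_t-X_s|^p\big)^{1/p}\le\Big(\sE^\sP\Big|\int_s^t\bar b_r\,\d r\Big|^p\Big)^{1/p}+\Big(\sE^\sP\Big|\int_s^t\bar\sigma_r\,\d W_r\Big|^p\Big)^{1/p}.
\]
The first term is at most $\|\bar b\|_\infty|t-s|$; for the second, Burkholder--Davis--Gundy gives $\sE^\sP|\int_s^t\bar\sigma_r\,\d W_r|^p\le C_p\,\sE^\sP\big(\int_s^t|\bar\sigma_r|^2\,\d r\big)^{p/2}\le C_p\|\bar\sigma\|_\infty^p|t-s|^{p/2}$, so the second term is at most $C|t-s|^{1/2}$. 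Since $|t-s|\le T$, one has $|t-s|\le\sqrt{T}\,|t-s|^{1/2}$, and combining the bounds yields $W_p(\mu_t,\mu_s)\le C|t-s|^{1/2}$ with $C$ depending only on $T$, $p$, $\|\bar b\|_\infty$, and $\|\bar\sigma\|_\infty$.

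The argument is essentially routine: the only points meriting care are the martingale property of the stochastic integral appearing in Itô's formula (so that its expectation vanishes), which follows from the $C^{1,2}_b$-regularity of $\phi$ and boundedness of $\bar\sigma$, and the applicability of the $L^p$ triangle inequality and the Burkholder--Davis--Gundy inequality for general $p\ge1$. I do not anticipate a genuine obstacle.
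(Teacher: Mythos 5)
Your proof is correct and follows essentially the same route as the paper: Itô's formula plus boundedness of $\bar b,\bar\sigma$ and the derivatives of $\phi$ for the first claim, and the coupling $(X_t,X_s)$ with the Burkholder--Davis--Gundy inequality for the Wasserstein bound. The only cosmetic difference is that you combine the drift and martingale contributions via the $L^p$ triangle inequality (absorbing $|t-s|\le\sqrt{T}\,|t-s|^{1/2}$), whereas the paper uses the elementary bound $(a+b)^p\le 2^{p-1}(a^p+b^p)$; these are interchangeable.
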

\begin{proof}
By \eqref{eq:admissible_flow},
$\mu_t =\cL^\sP(X_t)$ for all $t\in \sT $,
where   $\cL^\sP(X_0)=\rho$
and 
$  X_t =X_0+\int_0^t \bar{b}_s \d s +\int_0^t \bar{\sigma}_s  \d W_s $ for some bounded processes $\bar b$ and $\bar \sigma$ on a  probability space $(\Omega, \cF,\sP)$.
For any given $\phi\in C^{1,2}_b(\sT\times \sR^d)$,
by It\^o's formula, 
there exists   $C\ge 0$
such that for all $t,s\in \sT$,
\begin{align*}
 &\left|\int_{\sR^d} \phi(t,y)\mu_t -
 \int_{\sR^d} \phi(s,y)\mu_s \right|
 =\sE^\sP[\phi(t,X_t)-\phi(s,X_s)]
 \\
 &\quad 
 =\sE^\sP\left[
 \int_s^t \left( \partial_t \phi(r,X_r)+\frac{1}{2}\tr\left(\bar \sigma_r\bar \sigma^\top_r(\operatorname{Hess}_x\phi)(r,X_r)\right)
 +\bar b_r^\top (\nabla_x \phi)(r,X_r)\right)\d r\right]
 \\
 &\qquad 
 +\sE^\sP\left[ \int_s^t 
   (\nabla_x \phi)^\top (r,X_r) \bar \sigma_r \d W_r
 \right]
 \le C|t-s|,
 \end{align*}
where the last inequality used the uniform  boundedness of $\bar b$ and 
  $\bar \sigma$,
and the derivatives of $\phi$.

Assume further that  $\rho\in \cP_p(\sR^d)$. By the boundedness of $\bar b$ and 
  $\bar \sigma$,
 $\mu_t \in \cP_p(\sR^d)$ for all $t\in \sT$.
 For all $t,s\in \sT$,
 using  the definition of the Wasserstein  metric $W_p$,
\begin{align*}
W_p(\mu_t,\mu_s)^p&\le \sE^\sP[|X_t-X_s|^p]
=\sE^\sP\left[\left|\int_s^t \bar b_r \d r+
 \int_s^t \bar \sigma_r \d W_r \right|^p\right]
 \\
 &
 \le 2^{p-1}
\sE^\sP\left[\left|\int_s^t \bar b_r \d r\right|^p+
 \left|\int_s^t \bar \sigma_r \d W_r \right|^p\right].
\end{align*}
Using the boundedness of $\bar b$ and 
  $\bar \sigma$, and 
  the Burkholder-Davis-Gundy inequality,
   there exists $C\ge 0$ such that 
   $W_p(\mu_t,\mu_s)^p\le C|t-s|^{p/2}$
 for all $t,s\in \sT$.
 This finishes the proof. 
\end{proof}

Now we are ready to prove Proposition \ref{prop:pde_regularity}.

\begin{proof}[Proof of Proposition \ref{prop:pde_regularity}]
  It suffices to prove the existence of a classical solution
  $V\in C^{1,2}_b(\sT\times \sR^d)$ to \eqref{eq:hjb_diffusion}, since once such a  $V$ is obtained, 
  the measurable selection theorem \cite[D.5 Proposition]{hernandez2012discrete}
  ensures the 
   existence of   a measurable pointwise minimizer $\phi$ under Condition (H.\ref{assum:duality}\ref{item:measurable_selection_diffusion}). 
   Throughout this proof, fix $\bm \mu\in \cU_\rho$,
   and 
   define  
   $b^{\bm \mu}:\sT\times \sR^d\times A\to \sR^d$ by 
$b^{\bm \mu}(t,x,a)
\coloneqq b(t,x,a,\mu_t)$,
$\sigma^{\bm \mu  }
:\sT\times \sR^d\times A\to \sR^{d\times d}$
by $\sigma^{\bm \mu}(t,x,a)
\coloneqq \sigma(t,x,a,\mu_t)$,
  $f^{\bm \mu}:\sT\times \sR^d\times A\to \sR$ 
  by $f^{\bm \mu}(t,x,a)
\coloneqq f(t,x,a,\mu_t)$,
and 
$g^{\bm \mu}:\sR^d\to \sR$
by $g^{\bm \mu}(x) \coloneqq  g(x,\mu_T)$.
Write $
\Sigma^{\bm \mu  }=\sigma^{\bm \mu  }(\sigma^{\bm \mu  })^\top $.

  We first verify  
  (H.\ref{assum:duality}\ref{item:pde_regularity})  under Condition 
\ref{item:semilinear_pde}.
Since $\sigma$ is independence of $a$,
the HJB equation \eqref{eq:hjb_diffusion} is of the following semilinear form:
\begin{align*}
\begin{split}
\partial_t V(t,x) &+
\frac{1}{2}\textrm{tr}\big(\Sigma^{\bm\mu}(t,x) (\textrm{Hess}_xV)(t,x)\big)
+H\big(t,x, (\nabla_xV)(t,x)\big)=0,
 \quad  V(T,x)=g^{\bm \mu}(x),
 \end{split}
\end{align*}
where
$H(t,x,p)\coloneqq \inf_{a\in A}\left(p^\top b^{\bm \mu}(t,x,a)+f^{\bm \mu}(t,x,a)\right)$.
By Lemma \ref{lemma:regularity_mu} and the H\"{o}lder regularity of $b$, $\sigma$ and $f$,
for all $a\in A$,
the components of 
$b^{\bm \mu}(\cdot,a)$, 
$\Sigma^{\bm \mu}(\cdot,a)$
and $f^{\bm \mu}(\cdot,a)$
are in $C^{\alpha/2,\alpha}(\sT\times \sR^d)$,
and the $|\cdot|_{{\alpha/2,\alpha}}$-norms
are  uniformly bounded with respect to $a\in A$. Moreover,
using the inequality
$|\inf_{a\in A}f_a-\inf_{a\in A}g_a|\le C\sup_{a\in A}|f_a-g_a|$
for any $(f_a)_{a\in A}, (g_a)_{a\in A}\subset \sR$,
one can show that $H$ satisfies the condition \eqref{eq:H_regularity}(with $\delta =\alpha$).
Hence by Proposition \ref{prop:general_semilinear_regularity}, 
the HJB equation \eqref{eq:hjb_diffusion} admits a classical solution $V$.

 We then verify  
  (H.\ref{assum:duality}\ref{item:pde_regularity})  under Condition 
\ref{item:fully_nonlinear_pde}.
In this case,   
the HJB equation \eqref{eq:hjb_diffusion} is fully nonlinear. 
By Lemma \ref{lemma:regularity_mu} and the structural properties    of $b$, $\sigma$ and $f$ in 
Condition 
\ref{item:fully_nonlinear_pde},
the coefficients 
$b^{\bm \mu}$, 
$\Sigma^{\bm \mu}$
and $f^{\bm \mu}$
are bounded and  Lipschitz continuous in $(t,x)\in \sT\times \sR^d$, uniformly with respect to $a\in A$.
Then by the well-known Evans-Krylov theorem
(see  
 Example 6.1.8
 on page 279
 and Theorem 6.4.3 on page 301
in \cite{krylov1987nonlinear}) and 
standard regularization procedures of the coefficients,
the HJB equation \eqref{eq:hjb_diffusion} admits a classical solution $V$.

Finally, Condition (H.\ref{assum:duality}\ref{item:feasible}) 
holds under 
the nondegeneracy condition  
Item \ref{item:diffusion_nondegenerate}
and 
\cite[Theorem 6.13, p.~46]{yong2012stochastic}. 
 This completes the proof of this proposition.
\end{proof}

 \subsection{Proof of 
Theorem  \ref{thm:HJB-FP-PD}}

\begin{proof}[Proof of Theorem \ref{thm:HJB-FP-PD}]
 Observe 
from Theorem \ref{thm:primal_dual_NE_diffusion_sufficient} and Corollary 
\ref{cor:complementary_conditions_diffusion}
that for 
 any  $(\bm \mu^*, \psi^*)$ satisfying
\eqref{eq:comparision_FP_HJB},
by setting 
  $\xi^*(\d t,\d x,\d a)=\delta_{\phi(t,x)}(\d a)\mu_t^*(\d x)\d t$,
the triple 
$(\bm 
\mu^*, \xi^*,\psi^*)$
satisfies 
\eqref{eq:NE_primal_dual_diffusion}. Hence it suffices to prove  
 that any solution $(\bm 
\mu^*, \xi^*,\psi^*)$
to  
\eqref{eq:NE_primal_dual_diffusion} satisfies the HJB-FP system.

We first show 
 that $\psi^*$ satisfies the HJB equation \eqref{eq:comparison_HJB}. That is,
for all $(t,x)\in \sT\times\sR^d$, 
$\psi^*(T,x)= g(x, \mu^*_T)$
and $\min_{a\in A}F(t,x,a)=0$ with 
$$F(t,x,a)\coloneqq  \partial_t \psi^*(t,x)+ H\big(t,x,(\nabla_x \psi^*)(t,x),(\operatorname{Hess}_x \psi^*)(t,x),a, \mu^*_t  \big).
$$
Since $F(t,x,a)\geq 0$ by \eqref{eq:NE_dual_constraint_1_diffusion}, suppose $\min_{a\in A}F(t_0,x_0,a)=\delta>0$ for some $(t_0,x_0)\in (0,T]\times \sR^d$. By the continuity of $\min_{a\in A}F(t,x,a)$, there exist neighborhood $U(t_0)$ of $t_0$ and neighborhood $\sB_r(x_0)$ of $x_0$ such that $\min_{a\in A}F(t,x,a)\geq\delta/2$ for any $(t,x)\in U(t_0)\times \sB_r(x_0)$. Then 
$$
\int_{\sT\times \sR^d\times A}F(t,x,a)\xi^*(\d t,\d x,\d a)\geq \int_{U(t_0)\times \sB_r(x_0)\times A}\frac{\delta}{2}\xi^*(\d t,\d x,\d a)=\frac{\delta}{2}\int_{U(t_0)}\mu_t^*(\sB_r(x_0))\d t>0,
$$
where the second equality holds due to \eqref{eq:NE_consistency_constraint_diffusion}. This contradicts with \eqref{eq:complementary_diffusion}, and implies that    $\min_{a\in A}F(t,x,a)=0$ for all $(t,x)\in \sT\times \sR^d$. Similar argument shows  $\psi^*(T,x)= g(x, \mu^*_T)$.

We then prove that $\bm \mu^*$ satisfies the FP equation
\eqref{eq:comparison_FP}.
By Proposition \ref{prop:cont_margin}  and \eqref{eq:NE_consistency_constraint_diffusion},
there exists a 
$\mu_t^*(\d x)\d t$-a.e.~unique  $\gamma \in \cP(A|\sT\times \sR^d)$ such that 
 $\xi^*(\d t,\d x,\d a)=\gamma(\d a|t,x)  \mu_t^*(\d x)\d t$.
 We claim that 
 $\gamma(\d a|t,x)  =\delta_{\phi(t,x)}(\d a)$ for $\mu_t^*(\d x)\d t$-a.e.~$(t,x)\in \sT\times \sR^d$.

 Indeed,
Conditions 
\eqref{eq:NE_dual_constraint_2_diffusion}
and
\eqref{eq:complementary_diffusion}
imply that 
 $\int_A F(t,x,a) \gamma (\d a|t,x)=0$
 for $\mu_t^*(\d x)\d t$-a.e.~$(t,x)\in \sT\times \sR^d$.
Fix such a   $(t,x)\in \sT\times \sR^d$.
Since  $\phi(t,x)$ is the unique minimizer of $H(t,x,a)$,
and $\min_{a\in A} F(t,x,a)=0$,
it holds that  $F(t,x,a)>0$ for any $a\not =\phi(t,x)$. 
This implies that $\gamma(\{a \not =\phi(t,x)\}|t,x)=0$,
since otherwise 
$
\int_A F(t,x,a) \gamma (\d a|t,x)
\ge 
\int_{\{a  \not = \phi(t,x)\}} F(t,x,a) \gamma (\d a|t,x)>0.
$ 
This implies that $\gamma (\d a|t,x)=\delta_{\phi(t,x)}$
for $\mu_t^*(\d x)\d t$-a.e.~and hence $\xi^*(\d t,\d x,\d a)=\delta_{\phi(t,x)}(\d a)\mu_t^*(\d x)\d t$.  Substituting this form of $\xi^*$ into \eqref{eq:NE_primal_constraint_diffusion}   yields \eqref{eq:comparison_FP}. This finishes the proof. 
\end{proof}

\section{Conclusion}
{This paper establishes a primal-dual system for continuous-time MFGs and for characterizing the set of all NEs.
This system does not require the  convexity of   the associated Hamiltonian or the  uniqueness of its optimizer, and remains applicable  when
 the HJB equation lacks classical or even continuous solutions.

This primal-dual formulation introduces new analytical tools for studying NEs. The sufficient condition in Theorem \ref{thm:primal_dual_NE_diffusion_sufficient} serves as a verification theorem for NEs in MFGs with coefficients that are  measurable in the state and measure components. This enables proving the existence of NEs using fixed-point theorems without the continuity assumption.
The stability of the associated primal-dual system \eqref{eq:NE_primal_dual_diffusion} offers a potentially new approach for studying the stability of NE sets under model perturbations.
Meanwhile, this primal-dual system lays a new foundation for developing numerical methods to compute multiple NEs. Indeed, for discrete-time MFGs \cite{guo2024mf} proposed efficient optimization algorithms  that identify equilibria by jointly searching for primal and dual variables which minimize the violation of the primal-dual system;
see  also  \cite{briceno2019implementation} for primal-dual-based algorithms for MFGs with local coupling, and \cite{achdou2016mean, achdou2016meanII} for mean field control problems}. Extending such an optimization-based approach to continuous-time MFGs 
would be an interesting research topic.

\bibliographystyle{siam}
\bibliography{LP.bib}

 \end{document}